\pgfplotsset{compat = 1.13, tick label style = {font = \tiny}}
\tikzset{
    invisible/.style={opacity=0},
    visible on/.style={alt=#1{}{invisible}},
    alt/.code args={<#1>#2#3}{%
      \alt<#1>{\pgfkeysalso{#2}}{\pgfkeysalso{#3}} 
    },
  }
\def\@seccntformat#1{%
  \protect\textup{\protect\@secnumfont
    \ifnum\pdfstrcmp{subsection}{#1}=0 \bfseries\fi
    \csname the#1\endcsname
    \protect\@secnumpunct
  }%
}
\newtheorem{theorem}{Theorem}[section]
\newtheorem{proposition}[theorem]{Proposition}
\newtheorem{lemma}[theorem]{Lemma}
\newtheorem{algorithm}[theorem]{Algorithm}
\newtheorem{definition}[theorem]{Definition}
\newtheorem{remark}[theorem]{Remark}
\newcommand\axis{\boldsymbol{a}}
\newcommand\ee{\boldsymbol{e}}
\newcommand\ff{\boldsymbol{f}}
\newcommand\hh{\boldsymbol{h}}
\newcommand\mm{\boldsymbol{m}}
\newcommand\nn{\boldsymbol{n}}
\newcommand\rr{\boldsymbol{r}}
\newcommand\svec{\boldsymbol{s}}
\newcommand\uu{\boldsymbol{u}}
\newcommand\vv{\boldsymbol{v}}
\newcommand\xx{\boldsymbol{x}}
\newcommand\yy{\boldsymbol{y}}
\newcommand\zz{\boldsymbol{z}}
\newcommand\Amat{\boldsymbol{A}}
\newcommand\matrixM{\mathbf{M}}
\newcommand\CC{\boldsymbol{C}}
\newcommand\HH{\boldsymbol{H}}
\newcommand\JJ{\boldsymbol{J}}
\newcommand\KK{\boldsymbol{K}}
\renewcommand\S{\mathcal{S}}
\newcommand\Id{\mathbf{Id}}
\newcommand\FF{\boldsymbol{F}}
\newcommand\LL{\boldsymbol{L}}
\newcommand\M{\mathcal{M}}
\newcommand\E{\mathcal{E}}
\newcommand\NN{\mathcal{N}}
\newcommand\OO{\mathcal{O}}
\newcommand\T{\mathcal{T}}
\newcommand\Vh{\boldsymbol{V}_{\!\!h}}
\newcommand\Mh{\boldsymbol{\M}_h}
\newcommand\Nh{\NN_h}
\newcommand\Ph{\mathbb{P}_h}
\newcommand\Th{\T_h}
\newcommand\vphi{\varphi}
\newcommand\pphi{\boldsymbol{\phi}}
\newcommand\vvphi{\boldsymbol{\vphi}}
\newcommand\ppsi{\boldsymbol{\psi}}
\newcommand\eps{\varepsilon}
\newcommand\xxi{\boldsymbol{\xi}}
\newcommand\eeta{\boldsymbol{\eta}}
\newcommand\interp{\mathcal{I}_h}
\newcommand\Interp{\boldsymbol{\mathcal{I}}_h}
\newcommand\mmhk{\mm_{hk}}
\newcommand\mmhkbar{\overline{\mm}_{hk}}
\newcommand\pphih{\pphi_h}
\newcommand\ppi{\boldsymbol{\pi}}
\newcommand\PPi{\boldsymbol{\Pi}}
\newcommand\0{\boldsymbol{0}}
\newcommand\sphere{\mathbb{S}^2}
\newcommand\curl{\nabla\times}
\newcommand\grad{\nabla}
\newcommand\Grad{\boldsymbol{\nabla}}
\newcommand\Lapl{\boldsymbol{\Delta}}
\newcommand\N{\mathbb{N}}
\newcommand\R{\mathbb{R}}
\newcommand{\abs}[1]{\lvert #1 \rvert}
\newcommand{\dual}[3][]{\langle #2,#3 \rangle_{#1}}
\newcommand{\inner}[3][]{\langle #2,#3 \rangle_{#1}}
\newcommand{\norm}[2][]{\lVert #2 \rVert_{#1}}
\newcommand{\weakstarto}{\overset{\star}{\rightharpoonup}}
\newcommand{\weakto}{\rightharpoonup}
\DeclareMathOperator{\diam}{diam}
\DeclareMathOperator{\vol}{vol}
\newcommand\ddt{\frac{\mathrm{d}}{\mathrm{d}t}}
\newcommand\de{\partial}
\newcommand\dt{\mathrm{d}t}
\newcommand\dx{\mathrm{d}\xx}
\newcommand\mmt{\de_t \mm}
\newcommand\Heff{\HH_{\mathrm{eff}}}
\newcommand\Hext{\HH_{\mathrm{ext}}}
\newcommand\Hs{\HH_{\mathrm{s}}}
\newcommand\ldm{\ell_{\mathrm{dm}}}
\newcommand\lex{\ell_{\mathrm{ex}}}
\newcommand\Ms{M_{\mathrm{s}}} 
\newcommand\heff{\hh_{\mathrm{eff}}}
\newcommand\Cinv{C_{\mathrm{inv}}}
\newcommand\aloc{a^{\mathrm{loc}}}
\newcommand\heffloc{\hh_{\mathrm{eff}}^{\mathrm{loc}}}
\newcommand\Eloc{\E^{\mathrm{loc}}}
\newcommand\Cpi{C_{\ppi}}
\newcommand\ddelta{\boldsymbol{\delta}}
\newcommand\newton{\star}
\newcommand\MMag{\boldsymbol{M}}
\newcommand\II{\mathbf{I}}
\begin{document}

\title{The mass-lumped midpoint scheme for computational micromagnetics:
Newton linearization and application to magnetic skyrmion dynamics}
\author{Giovanni~Di~Fratta}
\author{Carl-Martin~Pfeiler}
\author{Dirk~Praetorius}
\author{Michele~Ruggeri}
\address{Dipartimento di Matematica e Applicazioni ``R.\ Caccioppoli'', Universit\`{a} degli Studi di Napoli ``Federico II'', Via Cintia, Complesso Monte S.\ Angelo, 80126 Napoli, Italy}
\email{giovanni.difratta@unina.it}
\address{Institute of Analysis and Scientific Computing, TU Wien, Wiedner Hauptstrasse 8--10, 1040, Vienna, Austria}
\email{carl-martin.pfeiler@asc.tuwien.ac.at}
\email{dirk.praetorius@asc.tuwien.ac.at}
\address{Department of Mathematics and Statistics, University of Strathclyde, 26 Richmond Street, Glasgow G1 1XH, Scotland, UK}
\email{michele.ruggeri@strath.ac.uk}

\date{\today}
\thanks{\emph{Acknowledgements.}
This research has been supported by the Austrian Science Fund (FWF) through the doctoral school \emph{Dissipation and dispersion in nonlinear PDEs} (grant W1245) and the special research program \emph{Taming complexity in partial differential systems} (grant F65).
Giovanni Di~Fratta acknowledges the support of the Austrian Science Fund (FWF) through the project \emph{Analysis and Modeling of Magnetic Skyrmions} (grant P-34609). Giovanni Di~Fratta also thanks TU Wien and MedUni Wien for their support and hospitality.}
\keywords{Landau--Lifshitz--Gilbert equation, Dzyaloshinskii--Moriya interaction, Magnetic skyrmions, Newton linearization, computational micromagnetics, finite elements}
\subjclass[2010]{35K55, 65M12, 65M22, 65M60, 65Z05}

\begin{abstract}
We discuss a mass-lumped midpoint scheme for the numerical approximation of the Landau--Lifshitz--Gilbert equation, which models the dynamics of the magnetization in ferromagnetic materials.
In addition to the classical micromagnetic field contributions, our setting covers the non-standard Dzyaloshinskii--Moriya interaction, which is the essential ingredient for the enucleation and stabilization of magnetic skyrmions.
Our analysis also includes the inexact solution of the arising nonlinear systems, for which we discuss both a constraint preserving fixed-point solver from the literature and a novel approach based on the Newton method.
We numerically compare the two linearization techniques and show that the Newton solver leads to a considerably lower number of nonlinear iterations.
Moreover, in a numerical study on magnetic skyrmions, we demonstrate that, for magnetization dynamics that are very sensitive to energy perturbations, the midpoint scheme, due to its conservation properties, is superior to the dissipative tangent plane schemes from the literature.

\end{abstract}

\maketitle
\thispagestyle{fancy}

\section{Introduction}

\subsection{Energetics of a ferromagnet}

In the continuum theory of micromagnetism,
whose origin dates back to the seminal work of Landau--Lifshitz~\cite{ll1935} on small ferromagnetic particles,
the amount of magnetic moment (per unit volume) of a rigid ferromagnetic body
occupying a bounded region $\Omega \subset \R^3$ is represented by a classical vector field,
the \emph{magnetization} $\MMag \colon \Omega \to \R^3$.
Its module, $\Ms := \abs{\MMag}$, describes the so-called \emph{saturation magnetization}.
In single-crystal ferromagnets~\cite{afm2006,ad2015},
$\Ms$ depends only on the temperature and is assumed to be constant when the specimen is well below the so-called Curie temperature of the material.
In this case, the magnetization can be represented in the form $\MMag := \Ms \mm$,
where $\mm \colon \Omega \to \sphere$ is a vector field with values in the unit sphere of $\R^3$,
and the observable magnetization states minimize the micromagnetic energy functional~\cite{brown1963,hs1998}
\begin{align} \label{mpslabel:eq:GLunorm}
&\E(\mm)
: =
\E_{\Omega} (\mm)
+ \mathcal{K}_{\Omega} (\mm)
+ \mathcal{W}_{\Omega} (\mm)
+ \mathcal{A}_{\Omega}(\mm)
+ \mathcal{Z}_{\Omega}(\mm)
\\
\notag&\; : =
\int_{\Omega} A \abs{\Grad \mm}^2
+  D (\curl \mm)\cdot \mm
- \frac{\mu_0}{2}  \Ms \Hs (\mm) \cdot \mm
+ \varphi_{\mathrm{an}} (\mm)
- \mu_0 \Ms \Hext \cdot \mm\; \dx\,,
\end{align}
defined for every $\mm \in H^1 ( \Omega ; \sphere)$.

The \emph{exchange energy}, $\E_{\Omega}(\mm)$, penalizes spatial variations
of the direction of the magnetization, with $A>0$ representing a
material-dependent constant that summarizes the stiffness of short-range (symmetric) exchange interactions.
The second term, $\mathcal{K}_{\Omega}(\mm)$, represents the
\emph{bulk Dzyaloshinskii--Moriya interaction (DMI)}~\cite{dzyaloshinskii1958,moriya1960},
and accounts for antisymmetric exchange interactions caused by possible lacks of inversion
symmetry in the crystal structure of the ferromagnet.
The sign of the constant $D \in \R$ affects the chirality of the ferromagnetic
system~{\cite{trjcf2012,scrtf2013}}.
The third term, $\mathcal{W}_{\Omega}(\mm)$, is the \emph{magnetostatic self-energy},
i.e., the energy due to the stray field $\Hs (\mm)$ induced by $\Ms\mm$.
From the mathematical point of view, $\Hs (\mm)$ can be
characterized as the projection of $(-\Ms\mm) \in L^2(\R^3; \R^3)$
on the closed subspace of gradient vector fields $\grad H^1(\R^3 ; \R) := \{ \grad u \colon u \in H^1 ( \R^3 ; \R) \}$
(see, e.g., {\cite{praetorius2004,dmrs2020}})\footnote{Here, with a slight abuse of notation, we
identify $\mm$ with its extension by zero to the whole $\R^3$.}.
Here, $\mu_0$ denotes the vacuum permeability.
Additionally, the micromagnetic energy includes two additional energy contributions:
the \emph{magnetocrystalline anisotropy energy} $\mathcal{A}_{\Omega}(\mm)$
and the \emph{Zeeman energy} $\mathcal{Z}_{\Omega}(\mm)$.
The energy density $\varphi_{\mathrm{an}} \colon \sphere \to \R_{\ge 0}$ models
the existence of easy directions of the magnetization due to the crystallographic structure of the ferromagnet, while $\mathcal{Z}_{\Omega}(\mm)$ models the tendency of a specimen to have the magnetization
aligned with the external applied field $\Hext \in L^2 ( \Omega ; \R^3 )$,
assumed to be unaffected by variations of $\mm$.
The competition among the energy contributions in {\eqref{mpslabel:eq:GLunorm}} explains {\emph{most}} of the striking
spin textures observable in ferromagnetic materials~{\cite{hs1998}},
in particular, the emergence of magnetic skyrmions~{\cite{fcs2013,frc2017}}.

\subsection{A more general energy functional} \label{mpslabel:sec:energy}

When a ferromagnetic system consists of several magnetic materials,
the material-dependent quantities $A$, $D$, and $\Ms$
are no longer constant in the region $\Omega$ occupied by the ferromagnet,
and one has to model spin interactions among different magnetic materials at their touching interface~\cite{afm2006}.
The easiest way is to assume a \emph{strong coupling condition}~\cite{ad2015,abmn2021,dd2020}:
Although $\Ms$ can be discontinuous across an interface, the direction of the
magnetization never jumps through it.
Under this constitutive assumption, the analysis of the composite can be carried out under the classical conditions
$M_s \in L^{\infty} ( \Omega ; \R_{> 0} )$ and $\mm \in H^1 (\Omega ; \sphere)$.
In this setting, the observable states of a rigid ferromagnetic body can be characterized as the local minimizers
of the micromagnetic energy functional still defined by~\eqref{mpslabel:eq:GLunorm},
but with the quantities $A = A(\xx)$, $D = D(\xx)$, and $\Ms = \Ms(\xx)$ to be understood as functions defined on $\Omega$.

In this paper, we are interested in a more general energy functional which,
other than incorporating the previous one as a special case,
also accounts for the presence of anisotropies in the lattice structures of the constituents.
To introduce the model, we first observe that the bulk DMI energy density
can be equivalently rewritten as
\begin{equation*}
D (\curl \mm) \cdot \mm = D \sum_{d = 1}^3 (\ee_d \times \partial_d \mm) \cdot \mm,
\end{equation*}
where $\{ \ee_d \}_{d = 1,2,3}$ denotes the standard basis of $\R^3$.
It is therefore a special case of the energy density
\begin{equation*}
g_{\mathrm{asym}} (\xx, \svec, \xxi) = \sum_{d = 1}^3 \KK_{\!d} (\xx) \xxi_d \cdot \svec
\quad
\text{for all }
\xx \in \Omega, \, \svec \in \R^3, \text{ and } \xxi = (\xxi_1,\xxi_2,\xxi_3) \in \R^{3 \times 3},
\end{equation*}
with $\{ \KK_{\!d} \}_{d=1,2,3}$ being $3$-by-$3$ antisymmetric matrices,
i.e., $\KK_{\!d} = - \KK_{\!d}^T$.
Similarly, the symmetric exchange energy density can be generalized to the density
\begin{equation*}
g_{\mathrm{sym}} (\xx, \xxi) = \frac{1}{2} \sum_{d = 1}^3 \Amat_d (\xx) \xxi_d \cdot \xxi_d
\quad
\text{for all }
\xx \in \Omega \text{ and } \xxi = (\xxi_1,\xxi_2,\xxi_3) \in \R^{3 \times 3},
\end{equation*}
with $\{ \Amat_d \}_{d=1,2,3}$ being $3$-by-$3$ invertible symmetric matrices,
i.e., $\Amat_d = \Amat_d^T$.
Hence, for $g := g_{\mathrm{sym}} + g_{\mathrm{asym}}$,
it holds that
\begin{equation} \label{mpslabel:eq:intro:energy}
\begin{split}
g (\xx, \svec, \xxi)
& =
\frac{1}{2} \sum_{d=1}^3
\big( \Amat_d (\xx) \xxi_d \cdot \xxi_d - 2 \KK_{\!d} (\xx) \svec \cdot \xxi_d \big) \\
& =
\frac{1}{2} \sum_{d=1}^3
\Amat_d (\xx) \left( \xxi_d - \Amat_d^{- 1} (\xx) \KK_{\!d} (\xx) \svec \right)
\cdot \left( \xxi_d - \Amat_d^{- 1} (\xx) \KK_{\!d} (\xx) \svec \right) \\
& \quad + \frac{1}{2} \sum_{d=1}^3 \KK_{\!d} (\xx) \Amat_d^{- 1} (\xx) \KK_{\!d} (\xx) \svec \cdot \svec.
\end{split}
\end{equation}
Note that $\KK_{\!d} (\xx) \Amat_d^{- 1} (\xx) \KK_{\!d} (\xx)$ is a symmetric matrix.
This discussion suggests the opportunity to investigate an energy functional covering the above generalized form;
see~\eqref{mpslabel:eq:llg:energy} below.
It is worth to notice that the structure of this energy functional does not only allow
for the description of a mixture of ferromagnetic materials, but also
covers typical homogeneous models arising as $\Gamma$-limit of composite
ferromagnetic materials with highly oscillating heterogeneities~\cite{abmn2021,dd2020}.

\subsection{Landau--Lifshitz--Gilbert equation and its numerical integration}

When the magnetization $\mm$ does not minimize the micromagnetic energy functional,
the ferromagnetic system is in a non-equilibrium state.
A well-accepted model for its time evolution is the Landau--Lifshitz--Gilbert equation (LLG)~\cite{ll1935,gilbert1955},
which in the so-called Gilbert form reads
\begin{equation} \label{mpslabel:eq:LLG}
\mmt = - \gamma_0 \, \mm \times \Heff(\mm) + \alpha \, \mm \times \mmt.
\end{equation}
This phenomenological equation describes the magnetization dynamics as a
dissipative precession driven by the effective field $\Heff(\mm) := - \mu_0^{-1} \Ms^{-1} \frac{\delta\E(\mm)}{\delta \mm}$,
and modulated by the gyromagnetic ratio of the electron $\gamma_0>0$
and the Gilbert damping parameter $\alpha>0$.
The numerical approximation of LLG is not a trivial task.
Nonlinearities, the numerical realization of the unit-length constraint,
the possible coupling with other (nonlinear) partial differential equations,
and the need of unconditionally stable numerical schemes
make the problem very challenging.
For this reason, in the last twenty years, the problem has been the subject of several mathematical studies;
see, e.g., \cite{prohl2001, aj2006,kp2006,bp2006,gc2007,alouges2008a, bkp2008,  cimrak2008b, cimrak2009,akt2012,akst2014, bffgpprs2014, ahpprs2014, ft2017, kw2018, hpprss2019,dpprs2017,  afkl2021}.

In this work, we consider the mass-lumped midpoint scheme proposed in~\cite{bp2006}.
The method is based on a mass-lumped first-order finite element method for the spatial discretization
and the second-order midpoint rule for the time discretization,
and involves the solution of one nonlinear system per time-step.
Besides introducing the method, the work~\cite{bp2006} proves
unconditional convergence of the finite element approximation towards
a weak solution of LLG in the sense of~\cite{as1992} and proposes a fixed-point iteration
to linearize the nonlinear problem arising from the scheme.
The scheme has also been the subject of further research:
On the one hand, the works~\cite{bartels2006,cimrak2009} incorporate the inexact solution of the nonlinear system into the convergence result.
On the other hand, the work~\cite{prs2018} focuses on the design and the analysis
of effective approaches to treat the nonlocal field contributions.

\subsection{Contributions}

In this work, as a novel contribution, we extend the midpoint scheme
and its analysis to more general energy contributions; see the discussion in Section~\ref{mpslabel:sec:energy}.
In particular, the present analysis covers DMI,
which is not covered by the analysis in~\cite{bp2006,bartels2006,cimrak2009,prs2018}.
We note that DMI is
the essential ingredient for the enucleation and the stabilization of magnetic skyrmions.
At this point, it is worth pointing out that DMI contributions
represent a challenging testing ground for numerical schemes for LLG.
Indeed, besides requiring accurate adaptations in the numerical analysis,
they determine magnetization configurations --- magnetic skyrmions --- that
turn out to be very sensitive to small perturbations of the micromagnetic energy.
In addition, we also discuss the linearization of the nonlinear scheme:
We extend the fixed-point iteration proposed in~\cite{bartels2006} to the present setting and propose an approach based on the Newton method, for which we provide a first full analysis (well-posedness, stability, convergence).
Finally, in a collection of numerical experiments,
we accurately test the energy conservation properties of the mass-lumped midpoint scheme
and extensively compare it
with the tangent plane schemes from~\cite{alouges2008a,akst2014,hpprss2019,dpprs2017}.

\subsection{Outline}

The remainder of the work is organized as follows:
We conclude this section by collecting the notation used throughout the paper.
In Section~\ref{mpslabel:sec:problem},
we describe the mathematical problem under consideration.
In Section~\ref{mpslabel:sec:algorithm},
we present the proposed algorithms and state their stability and convergence results.
Section~\ref{mpslabel:sec:numerics} is devoted to numerical experiments.
Finally, in Sections~\ref{mpslabel:sec:proofs1}--\ref{mpslabel:sec:proofs2}, we collect the proofs of the results stated in Section~\ref{mpslabel:sec:algorithm}.

\subsection{Notation}

Throughout the paper,
we use the standard notation for Lebesgue, Sobolev, and Bochner spaces and norms.
To highlight (spaces of) vector-valued or matrix-valued functions, we use bold letters,
e.g., we denote both $L^2(\Omega;\R^3)$ and $L^2(\Omega;\R^{3 \times 3})$ by $\LL^2(\Omega)$.
We denote by $\inner[\Omega]{\cdot}{\cdot}$ the scalar product in $\LL^2(\Omega)$
and by $\dual{\cdot}{\cdot}$ the duality pairing between $\HH^1(\Omega)$ and its dual.
By $C>0$ we always denote a generic constant, which is independent of the discretization parameters, but not necessarily the same at each occurrence.

\section{Problem formulation} \label{mpslabel:sec:problem}

Let $\Omega\subset\R^3$ be a bounded Lipschitz domain.
The energy of $\mm \in \HH^1(\Omega ; \sphere)$ is given by
\begin{equation} \label{mpslabel:eq:llg:energy}
\E(\mm)
=
\frac{1}{2} \, a(\mm,\mm) - \inner[\Omega]{\ff}{\mm},
\end{equation}
where $\ff \in \LL^2(\Omega)$, while
the bilinear form $a\colon \HH^1(\Omega) \times \HH^1(\Omega) \to \R$ is defined,
for all $\ppsi,\pphi \in \HH^1(\Omega)$,
by
\begin{equation} \label{mpslabel:eq:bilinearform}
a(\ppsi,\pphi)
= \sum_{d=1}^3 \inner[\Omega]{\Amat_d(\de_d \ppsi - \JJ_{\!d} \ppsi)}{\de_d \pphi - \JJ_{\!d} \pphi}
- \inner[\Omega]{\ppi(\ppsi)}{\pphi}.
\end{equation}
Here,
$\ppi\colon \LL^2(\Omega) \to \LL^2(\Omega)$ is a linear, bounded, and self-adjoint operator,
while, for $d=1,2,3$, the 3-by-3 matrices $\Amat_d$ and $\JJ_{\!d}$ have coefficients in $L^{\infty}(\Omega)$,
with $\Amat_d$ being also symmetric and uniformly positive definite, i.e., it holds that
$\Amat_d^T = \Amat_d$ and
\begin{equation*}
\Amat_d(\xx)\uu \cdot \uu \geq A_0 \abs{\uu}^2
\quad
\text{for almost all } \xx \in \Omega
\text{ and all } \uu\in\R^3,
\end{equation*}
where $A_0>0$ is a fixed constant.
The energy~\eqref{mpslabel:eq:llg:energy} covers the extensions of the classical micromagnetic functional discussed in Section~\ref{mpslabel:sec:energy}; cf.\ the expression in~\eqref{mpslabel:eq:intro:energy}.

The existence of minimizers of~\eqref{mpslabel:eq:llg:energy} in $\HH^1(\Omega ; \sphere)$ follows
from the direct method of calculus of variations.
Moreover, any minimizer $\mm \in \HH^1(\Omega ; \sphere)$ satisfies the Euler--Lagrange equations
\begin{equation*}
\dual{\heff(\mm)}{\pphi} = 0
\quad
\text{for all } \pphi \in \HH^1(\Omega) \text{ such that } \mm\cdot\pphi = 0 \text{ a.e.\ in } \Omega.
\end{equation*}
Here, $\heff(\mm) := - \frac{\delta\E(\mm)}{\delta \mm}$ is the (negative) G\^{a}teaux derivative of the energy, i.e.,
\begin{equation} \label{mpslabel:eq:heff}
- \dual{\heff(\mm)}{\pphi}
= \Big\langle\frac{\delta\E(\mm)}{\delta \mm} , \pphi \Big\rangle
= \lim_{\delta \to 0} \frac{\E(\mm + \delta \pphi) - \E(\mm)}{\delta}
\stackrel{\eqref{mpslabel:eq:llg:energy}}{=} a(\mm,\pphi) - \inner[\Omega]{\ff}{\pphi}.
\end{equation}
Turning to the dynamical case, a non-equilibrium configuration $\mm(t) \in \HH^1(\Omega ; \sphere)$
evolves according to~\eqref{mpslabel:eq:LLG}, which, after a suitable rescaling, reads
\begin{equation} \label{mpslabel:eq:llg}
\mmt = - \mm \times \heff(\mm) + \alpha \, \mm \times \mmt,
\end{equation}
with $\alpha>0$ being the Gilbert damping parameter.
The dynamics is dissipative in the sense that
any sufficiently smooth solution of~\eqref{mpslabel:eq:llg} satisfies the energy law
\begin{equation} \label{mpslabel:eq:decay}
\ddt\E(\mm(t)) = - \alpha \norm[\LL^2(\Omega)]{\mmt(t)}^2 \leq 0
\quad
\text{for all } t > 0.
\end{equation}
We conclude this section by recalling the notion of a weak solution of~\eqref{mpslabel:eq:llg}; see~\cite{as1992}.

\begin{definition} \label{mpslabel:def:weak}
Let $\mm^0 \in \HH^1(\Omega ; \sphere)$.
A vector field $\mm\colon\Omega \times \R_{>0} \to \sphere$ is called
a global weak solution of~\eqref{mpslabel:eq:llg}
if $\mm \in L^{\infty}(\R_{>0};\HH^1(\Omega ; \sphere))$ and, for all $T>0$, with $\Omega_T := \Omega \times (0, T)$ the following properties are satisfied:
\begin{itemize}
\item[\textrm{(i)}] $\mm\in \HH^1(\Omega_T)$;
\item[\textrm{(ii)}] $\mm(0)=\mm^0$ in the sense of traces;
\item[\textrm{(iii)}] For all $\vvphi\in\HH^1(\Omega_T)$, it holds that
\begin{equation} \label{mpslabel:eq:weak:variational}
\begin{split}
& \int_0^T \inner[\Omega]{\mmt(t)}{\vvphi(t)} \, \dt \\
& \quad = - \int_0^T \dual{\heff(\mm(t))}{\vvphi(t) \times \mm(t)} \, \dt
+ \alpha \int_0^T \inner[\Omega]{\mm(t)\times\mmt(t)}{\vvphi(t)} \, \dt;
\end{split}
\end{equation}
\item[\textrm{(iv)}] It holds that
\begin{equation} \label{mpslabel:eq:weak:energy}
\E(\mm(T)) + \alpha \int_0^T \norm[\LL^2(\Omega)]{\mmt(t)}^2 \dt
\leq \E(\mm^0).
\end{equation}
\end{itemize}
\end{definition}

We note that~\eqref{mpslabel:eq:heff} implicitly includes natural boundary conditions on $\mm$, which are homogeneous Neumann boundary conditions $\partial_{\nn} = \0$ if $\Amat_d= \lex^2 \Id$ and $\JJ_{\!d} = \0$ for $d=1,2,3$.
For a more explicit presentation, we refer to \cite{hpprss2019}.
The variational formulation~\eqref{mpslabel:eq:weak:variational} comes from a weak formulation of~\eqref{mpslabel:eq:llg}
in the space-time cylinder.
The energy inequality~\eqref{mpslabel:eq:weak:energy} is a weak counterpart of
the dissipative energy law~\eqref{mpslabel:eq:decay}.

\begin{remark} \label{mpslabel:rem:setting}
{\textrm{(i)}}
For ease of presentation, we restrict ourselves to the case of a time-independent field $\ff \in \LL^2(\Omega)$.
For time-dependent fields, the strong form~\eqref{mpslabel:eq:decay} and the weak form~\eqref{mpslabel:eq:weak:energy}
of the energy law of LLG read
\begin{equation*}
\ddt \widetilde\E(\mm(t)) = - \alpha \norm[\LL^2(\Omega)]{\mmt(t)}^2 + \inner[\Omega]{\ff(t)}{\mmt(t)}
\end{equation*}
and
\begin{equation*}
\widetilde\E(\mm(T))
+ \alpha \int_0^T \norm[\LL^2(\Omega)]{\mmt(t)}^2 \dt
- \int_0^T \inner[\Omega]{\ff(t)}{\mmt(t)} \, \dt
\leq \widetilde\E(\mm^0),
\end{equation*}
respectively,
where $\widetilde\E(\mm) = \E(\mm) + \inner[\Omega]{\ff}{\mm} = a(\mm,\mm)/2$.
\\
{\textrm{(ii)}}
The present setting covers and generalizes the model problems considered in previous
mathematical works on the numerical integration of LLG.
\begin{itemize}
\item With the choices $\Amat_d= \lex^2 \Id$ and $\JJ_{\!d} = \0$ for $d=1,2,3$,
where $\lex>0$ is the so-called exchange length and $\Id$ is the $3$-by-$3$ identity matrix,
$\ppi \equiv \0$, and $\ff \equiv \0$, we obtain that
\begin{equation*}
\dual{\heff(\ppsi)}{\vvphi}
= - \lex^2 \inner[\Omega]{\Grad\ppsi}{\Grad\pphi}.
\end{equation*}
This is the so-called small particle limit of LLG,
for which finite element schemes have been proposed, e.g.,
in the seminal papers~\cite{bp2006,alouges2008a}.
\item With the choices $\Amat_d= \lex^2 \Id$ and $\JJ_{\!d} = \0$ for $d=1,2,3$,
we obtain that
\begin{equation*}
\dual{\heff(\ppsi)}{\pphi}
= - \lex^2 \inner[\Omega]{\Grad\ppsi}{\Grad\pphi}
+ \inner[\Omega]{\ppi(\ppsi)}{\pphi}
+ \inner[\Omega]{\ff}{\pphi}.
\end{equation*}
This setting covers the classical energy contributions considered in micromagnetics
(exchange, uniaxial anisotropy, magnetostatic, Zeeman) and numerical integrators for
this case have been analyzed, e.g., in~\cite{akt2012,bffgpprs2014,akst2014,prs2018,dpprs2017}.
\item With the choices $\Amat_d= \lex^2 \Id$ and $\JJ_{\!d} = \ldm \, [\ee_d]_\times / (2 \lex^2)$ for $d=1,2,3$,
where $\ldm \in \R$ is a characteristic length associated with DMI,
$\ppi(\mm) = \ldm \, \mm/(2 \lex^2)$,
and $\ff \equiv \0$,
we obtain that\footnote{Here, $[\ee_d]_\times$ denotes the 3-by-3 matrix such that $[\ee_d]_\times \uu = \ee_d \times \uu$ for all $\uu \in \R^3$.}
\begin{equation*}
\dual{\heff(\ppsi)}{\pphi}
= - \lex^2 \inner[\Omega]{\Grad\ppsi}{\Grad\pphi}
- \frac{\ldm}{2} \inner[\Omega]{\curl\ppsi}{\pphi}
- \frac{\ldm}{2} \inner[\Omega]{\ppsi}{\curl\pphi},
\end{equation*}
which is the setting analyzed in~\cite{hpprss2019} for the simulation of chiral magnetic skyrmions by the means of a family of tangent plane integrators.
\end{itemize}
\end{remark}

\section{Numerical algorithms and main results} \label{mpslabel:sec:algorithm}

\subsection{Preliminaries}\label{mpslabel:sec:preliminaries}

Let $\kappa \geq 1$.
For the spatial discretization, assuming $\Omega$ to be a polyhedral domain,
we consider a $\kappa$-quasi-uniform family $\{ \Th \}_{h>0}$
of regular tetrahedral triangulations of $\Omega$ parametrized by the
mesh size $h = \max_{K \in \Th} \diam(K)$, i.e.,
$\kappa^{-1} h \le \vol(K)^{1/3} \le h$ for all $K \in \Th$.
We denote by $\Nh$ the set of vertices of $\Th$.
For any $K \in \Th$, we denote by $\mathcal{P}^1(K)$
the space of first-order polynomials on $K$.
We consider the space of $\Th$-piecewise affine and globally continuous finite elements
\begin{equation*}
\S^1(\Th)
:=
\left\{v_h \in C^0(\overline{\Omega})\colon v_h \vert_K \in \mathcal{P}^1(K) \text{ for all } K \in \Th \right\}.
\end{equation*}
The classical basis for this finite-dimensional subspace of $H^1(\Omega)$ is the set of nodal hat functions
$\left\{\vphi_{\zz}\right\}_{\zz\in\Nh}$,
which satisfy $\vphi_{\zz}(\zz')=\delta_{\zz,\zz'}$ for all $\zz,\zz'\in\Nh$.
The nodal interpolant $\interp\colon C^0(\overline{\Omega}) \to \S^1(\Th)$ is defined by $\interp[u] = \sum_{\zz \in \Nh} u(\zz) \vphi_{\zz}$ for all $u \in C^0(\overline{\Omega})$.

Let $\Vh:= \S^1(\Th)^3$.
For each time-step, approximate solutions of~\eqref{mpslabel:eq:llg} are sought in the
\emph{set of admissible approximate magnetizations}
\begin{equation*}
\Mh := \left\{\pphih \in \Vh\colon \abs{\pphih(\zz)}=1 \text{ for all } \zz \in \Nh \right\},
\end{equation*}
which consists of all elements of $\Vh$ satisfying the unit-length constraint
at the nodes of the triangulation.

Besides the standard scalar product $\inner[\Omega]{\cdot}{\cdot}$,
given a mesh $\Th$ and the associated nodal interpolant $\interp[\cdot]$,
we consider the mass-lumped product $\inner[h]{\cdot}{\cdot}$ defined by
\begin{equation*}
\inner[h]{\ppsi}{\pphi}
= \int_\Omega \interp[\ppsi \cdot \pphi] \, \dx
\quad \text{for all } \ppsi, \pphi \in \CC^0(\overline{\Omega}).
\end{equation*}
Using the definition of the nodal interpolant, we see that
\begin{equation} \label{mpslabel:eq:mass-lumping}
\inner[h]{\ppsi}{\pphi}
= \sum_{\zz \in \Nh} \beta_{\zz} \, \ppsi(\zz)\cdot\pphi(\zz),
\quad
\text{where } \beta_{\zz} := \int_\Omega \vphi_{\zz} \, \dx > 0.
\end{equation}
On $\Vh$, $\inner[h]{\cdot}{\cdot}$ is a scalar product and
the induced norm $\norm[h]{\cdot}$ is equivalent to the standard norm of $\LL^2(\Omega)$.
In particular, it holds that
\begin{equation}  \label{mpslabel:eq:normEquivalence}
\norm[\LL^2(\Omega)]{\pphi_h}
\leq \norm[h]{\pphi_h}
\leq \sqrt{5} \, \norm[\LL^2(\Omega)]{\pphi_h}
\quad \text{for all } \pphi_h \in \Vh;
\end{equation}
see~\cite[Lemma~3.9]{bartels2015}.
Finally, we define the mapping $\Ph \colon \HH^1(\Omega)^{\star} \to \Vh$ by
\begin{equation} \label{mpslabel:eq:pseudo-projection}
\inner[h]{\Ph \uu}{\pphi_h}
= \dual{\uu}{\pphi_h}
\quad \text{for all } \uu \in \HH^1(\Omega)^{\star} \text{ and } \pphi_h \in \Vh,
\end{equation}
i.e., $\Ph \uu \in \Vh$ is the Riesz representative of $\inner{\uu}{\cdot} \in \Vh^\star$
in the Hilbert space $(\Vh,\inner[h]{\cdot}{\cdot})$.

For the time discretization, we consider a partition of the positive real axis $\R_{>0}$
with constant time-step size $k>0$, i.e., $t_i := ik$ for all $i \in \N_0$.
Given a sequence $\{ \pphi_h^i \}_{i \in \N_0} \subset \Vh$, we define
\begin{equation} \label{mpslabel:eq:midpoint-derivative}
\pphi_h^{i+1/2} := \frac{\pphi_h^{i+1} + \pphi_h^i}{2}
\quad
\text{and}
\quad
d_t\pphi_h^{i+1} := \frac{\pphi_h^{i+1} - \pphi_h^{i}}{k}
\quad
\text{for all } i \in \N_0,
\end{equation}
as well as the piecewise linear time reconstruction
\begin{equation} \label{mpslabel:eq:timeApprox}
\pphi_{hk}(t) := \frac{t-t_i}{k}\pphi_h^{i+1} + \frac{t_{i+1} - t}{k}\pphi_h^i
\quad
\text{for all } i \in \N_0
\text{ and } t \in [t_i,t_{i+1}],
\end{equation}
which satisfies $\pphi_{hk} \in \HH^1(\Omega \times (0,T))$ for any $T > 0$.
\subsection{Ideal midpoint scheme}

In the following algorithm, we adapt the scheme initially proposed in~\cite{bp2006} to the present setting.
The fundamental ingredients are the midpoint rule for the time discretization,
the finite element space $\Vh$ endowed with
the mass-lumped scalar product $\inner[h]{\cdot}{\cdot}$ for the spatial integration,
and the mapping~\eqref{mpslabel:eq:pseudo-projection} for the discrete realization of the effective field.
We refer to the method as \emph{ideal midpoint scheme} in the sense that, as we will see in the next section,
practical implementations require suitable modifications.

\begin{algorithm}[ideal midpoint scheme] \label{mpslabel:alg:mps}
Input:
$\mm_h^0 \in \Mh$. \\
Loop:
For all $i \in \N_0$, compute $\mm_h^{i + 1} \in \Vh$ such that
\begin{equation} \label{mpslabel:eq:mps}
\inner[h]{d_t \mm_h^{i+1}}{\pphih}
= - \inner[h]{ \mm_h^{i + 1/2} \times \Ph \heff(\mm_h^{i + 1/2})}{\pphih}
+ \alpha \inner[h]{\mm_h^{i+1/2} \times d_t \mm_h^{i+1}}{\pphih}
\end{equation}
for all $\pphih \in \Vh$.\\
Output:
Sequence of approximations $\left\{\mm_h^i\right\}_{i \in \N_0}$.
\qed
\end{algorithm}

With the sequence of approximations $\{\mm_h^i\}_{i \in \N_0}$ delivered
by Algorithm~\ref{mpslabel:alg:mps}, we define the piecewise linear time reconstruction $\mmhk$ via \eqref{mpslabel:eq:timeApprox}.
In the following theorem,
we establish the stability and convergence of the approximations obtained with Algorithm~\ref{mpslabel:alg:mps}.
The proof is postponed to Section~\ref{mpslabel:sec:proofs1}.

\begin{theorem} \label{mpslabel:thm:main}
{\textrm{(i)}}
Suppose that $\mm_h^0 \in \Mh$.
Then, for all $i \in \N_0$, \eqref{mpslabel:eq:mps} admits a solution $\mm_h^{i + 1} \in \Mh$.
In particular, the scheme preserves the unit-length constraint at any time-step at the nodes of the triangulation.\\
{\textrm{(ii)}}
The scheme is unconditionally stable in the sense that, for all $J \in \N$, it holds that
\begin{equation} \label{mpslabel:eq:stability}
\E(\mm_h^J) + \alpha k \sum_{i=0}^{J-1} \norm[h]{d_t \mm_h^i}^2
= \E(\mm_h^0).
\end{equation}
{\textrm{(iii)}}
Suppose that $\mm_h^0 \to \mm^0$ in $\HH^1(\Omega)$ as $h \to 0$.
Then, there exist a global weak solution $\mm \colon \Omega \times \R_{>0} \to \sphere$ of~\eqref{mpslabel:eq:llg}
in the sense of Definition~\ref{mpslabel:def:weak}
and a subsequence of $\{ \mmhk \}$ (not relabeled)
which unconditionally converges towards $\mm$.
Specifically,
as $h,k \to 0$,
$\mmhk \weakstarto \mm$ in $L^{\infty}(\R_{>0};\HH^1(\Omega ; \sphere))$
and $\mmhk\vert_{\Omega_T} \weakto \mm\vert_{\Omega_T}$ in $\HH^1(\Omega_T)$ for all $T>0$.
\end{theorem}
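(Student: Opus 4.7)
For part (i), I would first establish existence of an iterate via a finite-dimensional topological fixed-point argument. Treating $\mm_h^{i+1/2}$ as the unknown in~\eqref{mpslabel:eq:mps} and rearranging, the scheme amounts to finding a zero of a continuous map $\Phi\colon\Vh\to\Vh$. Pairing $\inner[h]{\Phi(\vv_h)}{\vv_h}$ annihilates both cross-product contributions by orthogonality of $\vv_h\times\,\cdot\,$ to $\vv_h$, leaving a coercive term of the form $(2/k)(\norm[h]{\vv_h}^2 - \inner[h]{\mm_h^i}{\vv_h})$. A standard corollary of Brouwer's theorem then yields a zero of $\Phi$ in a sufficiently large ball. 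The nodal unit-length property follows by choosing in~\eqref{mpslabel:eq:mps} the localized test function $\pphih=\vphi_{\zz}\,\mm_h^{i+1/2}(\zz)$ for each $\zz\in\Nh$: both cross-product terms vanish pointwise, while the left-hand side reduces via~\eqref{mpslabel:eq:mass-lumping} to $(\beta_{\zz}/(2k))(\abs{\mm_h^{i+1}(\zz)}^2-\abs{\mm_h^i(\zz)}^2)$, forcing $\abs{\mm_h^{i+1}(\zz)}=\abs{\mm_h^i(\zz)}=1$.

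For part (ii), the idea is to exploit two different test functions in~\eqref{mpslabel:eq:mps}. Testing with $\pphih=d_t\mm_h^{i+1}$ eliminates the Gilbert term and gives $\norm[h]{d_t\mm_h^{i+1}}^2=-\inner[h]{\mm_h^{i+1/2}\times\Ph\heff(\mm_h^{i+1/2})}{d_t\mm_h^{i+1}}$, whereas testing with $\pphih=\Ph\heff(\mm_h^{i+1/2})$ eliminates the first cross-product term and, via~\eqref{mpslabel:eq:pseudo-projection}, turns the left-hand side into $\dual{\heff(\mm_h^{i+1/2})}{d_t\mm_h^{i+1}}$. Combining the two identities through the cyclic triple-product rule produces $\dual{\heff(\mm_h^{i+1/2})}{d_t\mm_h^{i+1}}=\alpha\norm[h]{d_t\mm_h^{i+1}}^2$. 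Since $\Amat_d^T=\Amat_d$ and $\ppi$ is self-adjoint, the bilinear form $a$ is symmetric, so the bilinearity identity $a(\mm_h^{i+1/2},d_t\mm_h^{i+1})=(2k)^{-1}(a(\mm_h^{i+1},\mm_h^{i+1})-a(\mm_h^i,\mm_h^i))$ together with~\eqref{mpslabel:eq:heff} and~\eqref{mpslabel:eq:llg:energy} collapses to $\E(\mm_h^{i+1})-\E(\mm_h^i)=-\alpha k\norm[h]{d_t\mm_h^{i+1}}^2$; telescoping then yields~\eqref{mpslabel:eq:stability}.

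For part (iii), I would follow the Galerkin compactness scheme standard for LLG, with extra care for the DMI-type terms. From~\eqref{mpslabel:eq:stability}, coercivity of $a$ (which follows from the uniform positive-definiteness of the $\Amat_d$ after the completion-of-squares rewriting in~\eqref{mpslabel:eq:intro:energy}), and the norm equivalence~\eqref{mpslabel:eq:normEquivalence}, one extracts uniform bounds on $\mmhk$ in $L^\infty(\R_{>0};\HH^1(\Omega))$ and on $\partial_t\mmhk$ in $L^2(\R_{>0};\LL^2(\Omega))$. A weak-$\star$/weak subsequential limit $\mm\in L^\infty(\R_{>0};\HH^1(\Omega))\cap\HH^1(\Omega_T)$ exists, and an Aubin--Lions argument upgrades this to strong $\LL^2$-convergence on $\Omega_T$. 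The constraint $\mm\in\sphere$ almost everywhere follows from the nodal constraint combined with a standard interpolation estimate bounding $\norm[L^1(\Omega)]{\interp[\abs{\mmhk}^2]-\abs{\mmhk}^2}$ by $Ch\norm[\LL^2(\Omega)]{\Grad\mmhk}^2$. The initial condition is inherited from the hypothesis $\mm_h^0\to\mm^0$ in $\HH^1(\Omega)$, and the energy inequality~\eqref{mpslabel:eq:weak:energy} is obtained by weak/weak-$\star$ lower semicontinuity applied to the telescoped version of~\eqref{mpslabel:eq:stability}.

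The main obstacle, typical for such schemes, is passing to the limit in the variational identity~\eqref{mpslabel:eq:mps}. After rewriting the scheme in space-time form using the piecewise linear reconstruction $\mmhk$ and an auxiliary midpoint-in-time reconstruction $\mmhkbar$, I would test against tensor products of smooth time functions with nodal interpolants of smooth vector fields. The key nonlinear pairing $\dual{\heff(\mmhkbar)}{\vvphi\times\mmhkbar}$, after a discrete integration by parts in the $\Amat_d$-term of $a$, pairs one weakly converging factor (through $\Grad\mmhkbar$) with one strongly converging one (the cross product $\vvphi\times\mmhkbar$, by Aubin--Lions); the DMI-type $\JJ_{\!d}$-contributions to $a$ are treated analogously, which is precisely where the enlarged energy framework of Section~\ref{mpslabel:sec:energy} requires a new adaptation compared to the exchange-only case treated in~\cite{bp2006}. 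The discrepancy between $\inner[h]{\cdot}{\cdot}$ and $\inner[\Omega]{\cdot}{\cdot}$ vanishes in the limit by a standard consistency estimate on piecewise linear functions, and the auxiliary operator $\Ph$ disappears through its defining identity~\eqref{mpslabel:eq:pseudo-projection}, thereby producing~\eqref{mpslabel:eq:weak:variational}.
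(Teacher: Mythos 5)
Your treatment of parts (i) and (ii) matches the paper's proof essentially verbatim. For (i), both you and the paper apply a corollary of Brouwer's theorem to a map built around the unknown $\mm_h^{i+1/2}$ — the paper's $\FF(\pphi_h) = \pphi_h - \mm_h^i + \tfrac{k}{2}\Interp[\pphi_h\times\Ph\heff(\pphi_h) + \alpha\,\pphi_h\times\mm_h^i]$ is your map $\Phi$ up to the factor $2/k$ — and then both test with the localized function $\vphi_{\zz}\,\mm_h^{i+1/2}(\zz)$ for the nodal length constraint. For (ii), your two-test-function route is the same thing the paper does in one stroke with the test function $\pphih = \alpha\,d_t\mm_h^{i+1} - \Ph\heff(\mm_h^{i+1/2})$, and the symmetry of $a$ giving the telescoping identity is exactly~\eqref{mpslabel:eq:aux_eq}.

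For part (iii) the skeleton is right (uniform bounds, compactness, limit passage against time-dependent nodal interpolants of smooth test fields, splitting off the interpolation error), but two points are imprecise. First, what the structure of $a$ gives is the G{\aa}rding inequality~\eqref{mpslabel:eq:gaarding}, not coercivity; the $\HH^1$-bound comes from G{\aa}rding plus the uniform $\LL^2$-bound $\norm[\LL^2(\Omega)]{\mm_h^J}\le|\Omega|^{1/2}$ that follows from $\mm_h^J\in\Mh$, exactly as in~\eqref{mpslabel:eq:stability:proof}. Second, ``discrete integration by parts in the $\Amat_d$-term'' is not the mechanism that lets you pass to the limit. Expanding $a(\mmhkbar,\vvphi_h\times\mmhkbar)$ produces, among harmless weak-against-strong pairings, the term $\int_\Omega\Amat_d\de_d\mmhkbar\cdot(\vvphi_h\times\de_d\mmhkbar)\,\dx = \int_\Omega\vvphi_h\cdot(\de_d\mmhkbar\times\Amat_d\de_d\mmhkbar)\,\dx$, which is quadratic in the weakly converging gradient; one cannot move the derivative off a piecewise-linear iterate, and the resulting gradient of $\vvphi_h\times\mmhkbar$ is likewise only weakly convergent. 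In the reference setting of~\cite{bp2006,prs2018}, where $\Amat_d$ is a scalar multiple of the identity, this term vanishes identically because $\de_d\mmhkbar\times\de_d\mmhkbar = \0$ pointwise. The paper's own sketch does not spell this out either and leans on ``the available convergence results and the convergence properties of $a(\cdot,\cdot)$ and $\ppi(\cdot)$'', so you are not missing anything the paper supplies, but you should replace the integration-by-parts claim with the pointwise cancellation (or, for genuinely matrix-valued $\Amat_d$, acknowledge that an additional argument is needed). The interpolation-error term $\dual{\heff(\mmhkbar)}{(\Interp - \mathbf{1})[\vvphi_h\times\mmhkbar]}$ is correctly identified and handled via the nodal interpolation estimate from~\cite{prs2018}.
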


\begin{remark}
Note that, differently from the corresponding estimates
for tangent plane schemes~\cite{alouges2008a,akst2014,hpprss2019,dpprs2017},
the stability result for Algorithm~\ref{mpslabel:alg:mps} (Theorem~\ref{mpslabel:thm:main}{\textrm{(ii)}}) does not require
any geometric assumption on the mesh.
Moreover, \eqref{mpslabel:eq:stability} holds with equality and without any artificial dissipative term on the left-hand side.
\end{remark}

Theorem~\ref{mpslabel:thm:main}{\textrm{(i)}} establishes unconditional existence of a solution of~\eqref{mpslabel:eq:mps},
but does not provide information about its uniqueness.
If $k=o(h^2)$,
one can show that a suitable fixed-point iteration is a contraction
provided that the discretization parameters are sufficiently small.
With the Banach fixed-point theorem, this implies that each time-step of
Algorithm~\ref{mpslabel:alg:mps} is well-posed.

\begin{proposition} \label{mpslabel:prop:ideal:wellposedness}
Suppose that $k=o(h^2)$ as $h,k \to 0$.
Then, there exist thresholds $h_0>0$ and $k_0 > 0$ such that,
for all $h < h_0$ and $k < k_0$,
the variational problem~\eqref{mpslabel:eq:mps} admits a unique solution $\mm_h^{i+1} \in \Mh$ for all $i \in \N_0$.
\end{proposition}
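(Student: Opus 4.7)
Existence of $\mm_h^{i+1} \in \Mh$ for every $i \in \N_0$ is already granted unconditionally by Theorem~\ref{mpslabel:thm:main}(i); only uniqueness has to be shown. The plan is to parametrize~\eqref{mpslabel:eq:mps} by the midpoint variable $\vv := \mm_h^{i+1/2}$, for which $\mm_h^{i+1} = 2\vv - \mm_h^i$ and $d_t\mm_h^{i+1} = (2/k)(\vv - \mm_h^i)$. Using $\vv \times \vv = \0$ to dispose of the damping nonlinearity, the scheme collapses to a single nonlinear equation in $\vv \in \Vh$. I would then establish uniqueness by subtracting the equations for two solutions $\vv_1, \vv_2 \in \Vh$ and testing with $\pphih = \vv_1 - \vv_2$ in the mass-lumped product; this is essentially equivalent to proving contraction for the Banach fixed-point iteration mentioned in the proposition.

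The central structural observation is the nodewise triple-product identity $(\uu \times \ww) \cdot \uu = 0$, which, combined with the fact that $\inner[h]{\cdot}{\cdot}$ only sees nodal values (see~\eqref{mpslabel:eq:mass-lumping}), kills two otherwise problematic contributions: the damping difference $\inner[h]{(\vv_1-\vv_2) \times \mm_h^i}{\vv_1-\vv_2} = 0$, and, after the decomposition $\vv_1 \times \Ph\heff(\vv_1) - \vv_2 \times \Ph\heff(\vv_2) = (\vv_1 - \vv_2) \times \Ph\heff(\vv_1) + \vv_2 \times \Ph(\heff(\vv_1) - \heff(\vv_2))$ (permitted by the affine structure of $\heff$ in~\eqref{mpslabel:eq:heff} and the linearity of $\Ph$), the analogous $\inner[h]{(\vv_1-\vv_2) \times \Ph\heff(\vv_1)}{\vv_1-\vv_2} = 0$. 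What remains is
\begin{equation*}
(2/k) \norm[h]{\vv_1 - \vv_2}^2 = - \inner[h]{\vv_2 \times \Ph(\heff(\vv_1) - \heff(\vv_2))}{\vv_1 - \vv_2}.
\end{equation*}

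Setting $\ww := \Ph(\heff(\vv_1) - \heff(\vv_2)) \in \Vh$ and exploiting that, by Theorem~\ref{mpslabel:thm:main}(i), both candidate solutions for $\mm_h^{i+1}$ lie in $\Mh$, so that $\norm[L^\infty]{\vv_j} \le 1$, Cauchy--Schwarz in $\inner[h]{\cdot}{\cdot}$ controls the right-hand side by $\norm[h]{\ww}\,\norm[h]{\vv_1 - \vv_2}$. From~\eqref{mpslabel:eq:pseudo-projection} and~\eqref{mpslabel:eq:heff} one has $\norm[h]{\ww}^2 = -a(\vv_1-\vv_2, \ww)$; continuity of the bilinear form $a$ from~\eqref{mpslabel:eq:bilinearform}, the inverse inequality $\norm[\HH^1(\Omega)]{\pphih} \le C h^{-1} \norm[\LL^2(\Omega)]{\pphih}$ on $\Vh$ (available by $\kappa$-quasi-uniformity), and the norm equivalence~\eqref{mpslabel:eq:normEquivalence} chain into $\norm[h]{\ww} \le C h^{-2} \norm[h]{\vv_1 - \vv_2}$. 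Substituting gives $(2/k - Ch^{-2}) \norm[h]{\vv_1 - \vv_2}^2 \le 0$, which forces $\vv_1 = \vv_2$ as soon as $k \le (2/C) h^2$; because $k = o(h^2)$, suitable thresholds $h_0, k_0 > 0$ exist.

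The main obstacle is the sharp control of $\norm[h]{\Ph\heff(\cdot)}$: the mass-lumped Riesz map inverts a mass matrix against a stiffness-type form, which already costs one power of $h^{-1}$, and a further $h^{-1}$ is needed to convert $\HH^1$-control of $\vv_1 - \vv_2$ back to the mass-lumped norm. Without the two nodal triple-product cancellations, one would be reduced to bounding the remaining $(\vv_1-\vv_2) \times \Ph\heff(\vv_1)$ contribution through $\norm[L^\infty]{\Ph\heff(\vv_1)} \sim h^{-5/2}$, yielding only the far stricter threshold $k = o(h^{5/2})$. Exploiting the structural antisymmetry $\vv \times \vv = \0$ together with the nodal triple-product identity is therefore the decisive point that lands the analysis precisely on the parabolic CFL scale $k = o(h^2)$ asserted in the proposition.
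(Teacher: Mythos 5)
Your proof is correct and takes essentially the same approach that the paper points to for this result (the contraction estimate underlying Proposition~\ref{mpslabel:prop:fp}{\textrm{(ii)}}): the nodal triple-product cancellations for the $\vv\times\vv$ and $(\vv_1-\vv_2)\times\Ph\heff(\vv_1)$ terms, the $\LL^\infty$ bound from $\Mh$-membership guaranteed by Theorem~\ref{mpslabel:thm:main}{\textrm{(i)}}, and the two factors of $h^{-1}$ (one from~\eqref{mpslabel:eq:boundednessPh}, one from the inverse estimate~\eqref{mpslabel:eq:inverse}) combine to the constant $\sim k h^{-2}$ exactly as in the paper. The only cosmetic difference is that you argue uniqueness directly by subtracting two solutions of~\eqref{mpslabel:eq:mps} rather than routing through a Banach fixed-point contraction, which is a valid and in fact slightly leaner packaging of the identical calculation.
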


The proof of Proposition~\ref{mpslabel:prop:ideal:wellposedness} can be obtained
simplifying the argument of
the proofs of Proposition~\ref{mpslabel:prop:fp} and Theorem~\ref{mpslabel:thm:main:fp} below,
therefore we omit it.

\subsection{Practical midpoint schemes}\label{mpslabel:sec:practical}

Each time-step of Algorithm~\ref{mpslabel:alg:mps} requires the solution of a \emph{nonlinear} system
and the computation of \emph{nonlocal} field contributions.

Nonlinearity is a consequence of the first term on the right-hand side of~\eqref{mpslabel:eq:mps}.
The second term on the right-hand side, at first glance also nonlinear in $\mm_h^{i+1}$, turns out to be actually linear.
Indeed, it holds that
\begin{equation*}
\mm_h^{i+1/2} \times d_t \mm_h^{i+1}
\stackrel{\eqref{mpslabel:eq:midpoint-derivative}}{=} \frac{\mm_h^{i+1} + \mm_h^i}{2}
\times \frac{\mm_h^{i+1} - \mm_h^i}{k}
= - \frac{1}{k} \, \mm_h^{i+1} \times \mm_h^i.
\end{equation*}
However, using an arbitrary off-the-shelf nonlinear solver for~\eqref{mpslabel:eq:mps},
the conservation and stability properties of Algorithm~\ref{mpslabel:alg:mps}
established in Theorem~\ref{mpslabel:thm:main}(i)--(ii) are in general lost.
Moreover, $\ppi$ can be nonlocal and non-exactly computable (e.g., for the stray field),
so that the field contribution $\ppi(\mm_h^{i+1/2})$ must be numerically approximated.
Hence, a direct implementation of Algorithm~\ref{mpslabel:alg:mps} should be
based on an inner iteration performing the solution of the nonlinear system~\eqref{mpslabel:eq:mps}
and the approximate computation of $\ppi(\mm_h^{i+1/2})$.

In the remainder of this section, we discuss and analyze
an effective treatment of the nonlocal contribution, which we combine with
two approaches for the linearization of~\eqref{mpslabel:eq:mps},
from which we will obtain two \emph{practical midpoint schemes}.

To start with, we define
the bilinear form $\aloc\colon \HH^1(\Omega) \times \HH^1(\Omega) \to \R$
by
\begin{equation} \label{mpslabel:eq:bilinearform_loc}
\aloc(\ppsi,\pphi)
= \sum_{d=1}^3 \inner[\Omega]{\Amat_d(\de_d \ppsi - \JJ_{\!d} \ppsi)}{\de_d \pphi - \JJ_{\!d} \pphi}
\quad
\text{for all } \ppsi,\pphi \in \HH^1(\Omega).
\end{equation}
We consider the local parts of the energy and the effective field given by
\begin{equation*}
\Eloc(\mm)
: =
\frac{1}{2} \, \aloc(\mm,\mm) - \inner[\Omega]{\ff}{\mm}
\stackrel{\eqref{mpslabel:eq:llg:energy}}{=}
\E(\mm) + \frac{1}{2} \inner[\Omega]{\ppi(\mm)}{\mm}
\end{equation*}
and
\begin{equation*}
\begin{split}
- \dual{\heffloc(\mm)}{\pphi}
:= \Big\langle\frac{\delta\Eloc(\mm)}{\delta \mm} , \pphi \Big\rangle
& \stackrel{\eqref{mpslabel:eq:bilinearform_loc}}{=} \aloc(\mm,\pphi) - \inner[\Omega]{\ff}{\pphi} \\
& \, \stackrel{\eqref{mpslabel:eq:bilinearform}}{=} a(\mm,\pphi) + \inner[\Omega]{\ppi(\mm)}{\pphi} - \inner[\Omega]{\ff}{\pphi} \\
& \, \stackrel{\eqref{mpslabel:eq:heff}}{=} - \dual{\heff(\mm)}{\pphi} + \inner[\Omega]{\ppi(\mm)}{\pphi},
\end{split}
\end{equation*}
respectively.
Then, for $i \in \N_0$,
we rewrite~\eqref{mpslabel:eq:mps} in terms of the new unknown $\eeta_h^i := \mm_h^{i+1/2} \in \Vh$.
Since $d_t \mm_h^{i+1} = 2(\eeta_h^i - \mm_h^i)/k$,
it is easy to see that~\eqref{mpslabel:eq:mps} is equivalent to the following problem:
First, compute $\eeta_h^i \in \Vh$ such that, for all $\pphi_h \in \Vh$, it holds that
\begin{equation} \label{mpslabel:eq:mps_eta1}
\begin{split}
& \inner[h]{\eeta_h^i}{\pphih}
+ \frac{k}{2} \inner[h]{ \eeta_h^i \times \Ph \heffloc(\eeta_h^i)}{\pphih}
+ \frac{k}{2} \inner[h]{ \eeta_h^i \times \Ph \ppi(\eeta_h^i)}{\pphih}
+ \alpha \inner[h]{\eeta_h^i \times \mm_h^i}{\pphih} \\
& \quad = \inner[h]{\mm_h^i}{\pphih}.
\end{split}
\end{equation}
Then, define 
\begin{equation} \label{mpslabel:eq:mps_eta2}
\mm_h^{i+1} := 2 \, \eeta_h^i - \mm_h^i.
\end{equation}
To treat the nonlocal contribution $\ppi(\eeta_h^i)$,
we adopt the implicit-explicit (IMEX) approach introduced in~\cite{prs2018}.
Let $\ppi_h \colon \Vh \to \Vh$ be an operator approximating $\ppi$,
assumed to be linear and uniformly bounded in $\LL^2(\Omega)$
in the sense that $\norm[L(\LL^2(\Omega);\LL^2(\Omega))]{\ppi_h} \leq \Cpi$
for some $\Cpi>0$ independent of $h$.
Moreover, we say that $\ppi_h$ is \emph{consistent} with $\ppi$, if for all $\pphi \in \LL^2(\Omega)$ and all $(\pphi_h)_{h>0} \subset \Vh$ with $\pphi_h \to \pphi$ in $\LL^2(\Omega)$ as $h \to 0$, it holds that
\begin{align}\label{mpslabel:eq:consistency}
\ppi_h(\pphi_h) \to \ppi(\pphi) \quad \text{in} \quad \LL^2(\Omega) \quad \text{as } h \to 0.
\end{align}
We define $\mm_h^{-1} := \mm_h^0$ and
\begin{equation} \label{mpslabel:eq:pi:ab}
\PPi_h(\mm_h^i, \mm_h^{i-1})
:=
\frac{3}{2}\ppi_h(\mm_h^i) - \frac{1}{2}\ppi_h(\mm_h^{i-1}).
\end{equation}
Then, in~\eqref{mpslabel:eq:mps_eta1}, we replace $\ppi(\eeta_h^i)$ with its approximation $\PPi_h(\mm_h^i, \mm_h^{i-1})$ to obtain
\begin{equation} \label{mpslabel:eq:mps_eta1_imex}
\begin{split}
& \inner[h]{\eeta_h^i}{\pphih}
+ \frac{k}{2} \inner[h]{ \eeta_h^i \times \Ph \heffloc(\eeta_h^i)}{\pphih}
+ \frac{k}{2} \inner[h]{ \eeta_h^i \times \Ph \PPi_h(\mm_h^i,\mm_h^{i-1})}{\pphih} \\
& \qquad + \alpha \inner[h]{\eeta_h^i \times \mm_h^i}{\pphih}
= \inner[h]{\mm_h^i}{\pphih}.
\end{split}
\end{equation}
In particular, the nonlocal contribution, treated explicitly, becomes independent of the unknown $\eeta_h^i$.
We now discuss two strategies to linearize~\eqref{mpslabel:eq:mps_eta1_imex} in order to arrive at two practical midpoint schemes.
To emphasize the inexact solution of~\eqref{mpslabel:eq:mps_eta1_imex} up to some accuracy $\eps > 0$, we write $\mm_{h\eps}^i$ rather than $\mm_{h}^i$ for the iterates of the practical (linearized) midpoint schemes.

\subsubsection{Constraint-preserving fixed-point iteration} \label{mpslabel:sec:fp}
We solve~\eqref{mpslabel:eq:mps_eta1_imex} with the following fixed-point iteration:
Let $\eps>0$ denote some prescribed tolerance.
Set $\eeta_h^{i,0} := \mm_{h\eps}^i$.
For $\ell \in \N_0$, given $\eeta_h^{i,\ell} \in \Vh$,
compute $\eeta_h^{i,\ell+1} \in \Vh$ such that, for all $\pphi_h \in \Vh$, it holds that
\begin{equation} \label{mpslabel:eq:mps_eta_fp}
\begin{split}
& \inner[h]{\eeta_h^{i,\ell+1}}{\pphih}
+ \frac{k}{2} \inner[h]{ \eeta_h^{i,\ell+1} \times \Ph \heffloc(\eeta_h^{i,\ell})}{\pphih}
+ \frac{k}{2} \inner[h]{ \eeta_h^{i,\ell+1} \times \Ph \PPi_h(\mm_{h\eps}^i,\mm_{h\eps}^{i-1})}{\pphih} \\
& \qquad + \alpha \inner[h]{\eeta_h^{i,\ell+1} \times \mm_{h\eps}^i}{\pphih}
= \inner[h]{\mm_{h\eps}^i}{\pphih},
\end{split}
\end{equation}
until
\begin{equation} \label{mpslabel:eq:stopping_fp}
\norm[h]{\Interp[\eeta_h^{i,\ell+1} \times \Ph (\heffloc(\eeta_h^{i,\ell+1}) - \heffloc(\eeta_h^{i,\ell}))]} \le \eps,
\end{equation}
where $\Interp[\cdot]$ denotes the vector-valued nodal interpolant.
If $\ell^* \in \N_0$ is the smallest integer for which the stopping criterion~\eqref{mpslabel:eq:stopping_fp} is satisfied,
in view of~\eqref{mpslabel:eq:mps_eta2},
the approximate magnetization at the new time-step is defined as
$\mm_{h\eps}^{i+1} := 2 \, \eeta_h^{i,\ell^*+1} - \mm_{h\eps}^i$.

In the following proposition, we collect the properties of the proposed fixed-point iteration.
The proof is postponed to Section~\ref{mpslabel:sec:proofs2}.

\begin{proposition} \label{mpslabel:prop:fp}
Let $i \in \N_0$.\\
{\textrm{(i)}}
For all $\ell \in \N_0$, the variational problem~\eqref{mpslabel:eq:mps_eta_fp} admits a unique solution $\eeta_h^{i,\ell+1} \in \Vh$.
Moreover, it holds that $\norm[\LL^{\infty}(\Omega)]{\eeta_h^{i,\ell+1}} \le 1$.\\
{\textrm{(ii)}}
If $k=o(h^2)$ as $h,k \to 0$, there exist
a contraction constant $0 < q < 1$
and thresholds $h_0, k_0 > 0$ such that,
for all $h < h_0$ and $k < k_0$, it holds that
\begin{equation} \label{mpslabel:eq:contraction}
\norm[h]{\eeta_h^{i,\ell+2} - \eeta_h^{i,\ell+1}}
\le
q \,
\norm[h]{\eeta_h^{i,\ell+1} - \eeta_h^{i,\ell}}
\quad \text{for all } \ell \in \N_0.
\end{equation}
The constants $q,h_0,k_0$ depend only on the mesh parameter $\kappa$
and the problem data.\\
{\textrm{(iii)}}
Under the assumptions of part~${\textrm{(ii)}}$,
the stopping criterion~\eqref{mpslabel:eq:stopping_fp} is met in a finite number of iterations.
If $\ell^* \in \N_0$ denotes the smallest integer for which~\eqref{mpslabel:eq:stopping_fp} is satisfied,
the new approximation $\mm_{h\eps}^{i+1} := 2 \, \eeta_h^{i,\ell^*+1} - \mm_{h\eps}^i \in \Vh$ belongs to $\Mh$.
\end{proposition}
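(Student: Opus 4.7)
The proof proceeds in the natural order (i)$\to$(ii)$\to$(iii), exploiting throughout the nodal representation~\eqref{mpslabel:eq:mass-lumping} of the mass-lumped product and standard finite element inverse estimates. For part~(i), \eqref{mpslabel:eq:mps_eta_fp} defines a linear square system on $\Vh$, so existence and uniqueness reduce to coercivity of the associated bilinear form. Testing with $\pphih := \eeta_h^{i,\ell+1}$ annihilates the three cross-product terms on the left-hand side, since the nodal representation~\eqref{mpslabel:eq:mass-lumping} reduces each of them to a sum over $\zz \in \Nh$ of triple products of the form $(\uu \times \vv)\cdot\uu = 0$; this leaves $\norm[h]{\eeta_h^{i,\ell+1}}^2 = \inner[h]{\mm_{h\eps}^i}{\eeta_h^{i,\ell+1}}$, enough for well-posedness. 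For the $\LL^{\infty}$-bound, I would test \eqref{mpslabel:eq:mps_eta_fp} node-by-node with $\pphih(\xx) := \vphi_\zz(\xx)\,\eeta_h^{i,\ell+1}(\zz)$: mass-lumping localizes every contribution to $\zz$, the cross-product terms vanish for the same reason, and what remains is the key nodal identity $\abs{\eeta_h^{i,\ell+1}(\zz)}^2 = \mm_{h\eps}^i(\zz)\cdot\eeta_h^{i,\ell+1}(\zz)$, from which $\abs{\eeta_h^{i,\ell+1}(\zz)} \leq \abs{\mm_{h\eps}^i(\zz)} = 1$.

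For part~(ii), set $\ddelta := \eeta_h^{i,\ell+2} - \eeta_h^{i,\ell+1}$. Subtracting \eqref{mpslabel:eq:mps_eta_fp} for two consecutive iterates and using the splitting
\begin{equation*}
\eeta_h^{i,\ell+2} \times \Ph\heffloc(\eeta_h^{i,\ell+1}) - \eeta_h^{i,\ell+1} \times \Ph\heffloc(\eeta_h^{i,\ell}) = \ddelta \times \Ph\heffloc(\eeta_h^{i,\ell+1}) + \eeta_h^{i,\ell+1} \times \Ph\bigl(\heffloc(\eeta_h^{i,\ell+1}) - \heffloc(\eeta_h^{i,\ell})\bigr),
\end{equation*}
I would test with $\pphih := \ddelta$: the three cross-product self-terms again drop out by mass-lumping, yielding
\begin{equation*}
\norm[h]{\ddelta}^2 = -\frac{k}{2}\inner[h]{\eeta_h^{i,\ell+1} \times \Ph\bigl(\heffloc(\eeta_h^{i,\ell+1}) - \heffloc(\eeta_h^{i,\ell})\bigr)}{\ddelta}.
\end{equation*}
From here I chain Cauchy--Schwarz, the $\LL^{\infty}$-bound from part~(i), the estimate $\norm[h]{\Ph\uu} \leq C h^{-1}\norm[{\HH^1(\Omega)^{\star}}]{\uu}$ (which follows from $\norm[h]{\Ph\uu}^2 = \dual{\uu}{\Ph\uu}$ together with~\eqref{mpslabel:eq:normEquivalence} and an inverse estimate on $\Vh$), continuity of the linear map $\heffloc$ from $\HH^1(\Omega)$ to $\HH^1(\Omega)^{\star}$ (because $\aloc$ is bounded on $\HH^1(\Omega) \times \HH^1(\Omega)$), and a second inverse estimate $\norm[{\HH^1(\Omega)}]{\vv_h} \leq C h^{-1}\norm[h]{\vv_h}$. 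The resulting bound $\norm[h]{\ddelta} \leq C k h^{-2}\norm[h]{\eeta_h^{i,\ell+1} - \eeta_h^{i,\ell}}$ yields, under $k = o(h^2)$, a contraction constant $q := C k h^{-2} < 1$ on suitably small thresholds $h_0, k_0$, with $C$ depending only on $\kappa$ and the problem data.

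For part~(iii), the Banach fixed-point theorem applied to~\eqref{mpslabel:eq:contraction} gives $\norm[h]{\eeta_h^{i,\ell+1} - \eeta_h^{i,\ell}} \to 0$ as $\ell \to \infty$. Re-applying the chain of estimates from part~(ii) at fixed $h$, and noting that $\inner[h]{\ww}{\pphih} = \inner[h]{\Interp[\ww]}{\pphih}$ for continuous $\ww$ by mass-lumping, bounds the left-hand side of~\eqref{mpslabel:eq:stopping_fp} by $C h^{-2}\norm[h]{\eeta_h^{i,\ell+1} - \eeta_h^{i,\ell}}$; hence the stopping criterion is met after finitely many iterations. To verify the nodal unit-length of $\mm_{h\eps}^{i+1} := 2\eeta_h^{i,\ell^*+1} - \mm_{h\eps}^i$, I would expand $\abs{\mm_{h\eps}^{i+1}(\zz)}^2 = 4\abs{\eeta_h^{i,\ell^*+1}(\zz)}^2 - 4\,\eeta_h^{i,\ell^*+1}(\zz)\cdot\mm_{h\eps}^i(\zz) + 1$ and insert the nodal identity $\abs{\eeta_h^{i,\ell^*+1}(\zz)}^2 = \mm_{h\eps}^i(\zz)\cdot\eeta_h^{i,\ell^*+1}(\zz)$ from part~(i), which makes the middle two terms cancel and gives $\abs{\mm_{h\eps}^{i+1}(\zz)} = 1$ at every $\zz \in \Nh$. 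The main obstacle is part~(ii), where four technical ingredients (mass-lumped Riesz duality of $\Ph$, norm equivalence~\eqref{mpslabel:eq:normEquivalence}, continuity of $\aloc$, and two distinct inverse estimates on $\Vh$) must be tracked carefully to produce precisely the $h^{-2}$ scaling, which in turn yields the claimed CFL-type condition $k = o(h^2)$.
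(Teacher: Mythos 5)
Your proposal is correct and follows essentially the same path as the paper: the nodal identity obtained by testing with $\eeta_h^{i,\ell+1}(\zz)\vphi_{\zz}$ for parts~(i) and~(iii), the add-and-subtract splitting $\eeta_h^{i,\ell+2}\times\Ph\heffloc(\eeta_h^{i,\ell+1}) - \eeta_h^{i,\ell+1}\times\Ph\heffloc(\eeta_h^{i,\ell}) = \ddelta\times\Ph\heffloc(\eeta_h^{i,\ell+1}) + \eeta_h^{i,\ell+1}\times\Ph\bigl(\heffloc(\eeta_h^{i,\ell+1})-\heffloc(\eeta_h^{i,\ell})\bigr)$ for part~(ii), and the same chain of $\norm[\LL^\infty]{\cdot}\le 1$, $\Ph$-boundedness~\eqref{mpslabel:eq:boundednessPh}, $\heffloc$-continuity~\eqref{mpslabel:eq:heff:lipschitz}, and inverse estimate~\eqref{mpslabel:eq:inverse} to produce the $kh^{-2}$ factor. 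Your conclusion of part~(iii) --- expanding $\abs{\mm_{h\eps}^{i+1}(\zz)}^2$ and cancelling via the nodal identity --- is a marginally more direct route to $\mm_{h\eps}^{i+1}\in\Mh$ than the paper's detour through the auxiliary midpoint form, but it rests on the same observation; one small imprecision is that $\heffloc$ is affine rather than linear, which is immaterial here since only differences appear.
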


For all $i \in \N_0$,
let $\rr_{h\eps}^i := \Ph (\heffloc(\eeta_h^{i,\ell^*+1}) - \heffloc(\eeta_h^{i,\ell^*})) \in \Vh$.
Because of the stopping criterion~\eqref{mpslabel:eq:stopping_fp},
it holds that $\norm[h]{\Interp[\mm_{h\eps}^{i+1/2} \times \rr_{h\eps}^i]} \leq \eps$.
With this definition, the proposed linearization of Algorithm~\ref{mpslabel:alg:mps}
is covered by the following algorithm.

\begin{algorithm}[practical midpoint scheme, constraint preserving fixed-point iteration] \label{mpslabel:alg:mps_fp}
Input:
$\mm_{h\eps}^0 := \mm_h^0 \in \Mh$. \\
Loop:
For all $i \in \N_0$, use the constraint preserving fixed-point iteration~\eqref{mpslabel:eq:mps_eta_fp}--\eqref{mpslabel:eq:stopping_fp} to compute $\mm_{h\eps}^{i + 1} \in \Mh$ and $\rr_{h\eps}^i \in \Vh$ with $\norm[h]{\Interp[\mm_{h\eps}^{i+1/2} \times \rr_{h\eps}^i]} \leq \eps$
such that
\begin{equation} \label{mpslabel:eq:mps:fp}
\begin{split}
\inner[h]{d_t \mm_{h\eps}^{i+1}}{\pphih}
& = - \inner[h]{ \mm_{h\eps}^{i + 1/2} \times \Ph \heff(\mm_{h\eps}^{i + 1/2})}{\pphih}
+ \alpha \inner[h]{\mm_{h\eps}^{i+1/2} \times d_t \mm_{h\eps}^{i+1}}{\pphih} \\
& \qquad + \inner[h]{\mm_{h\eps}^{i+1/2} \times [\rr_{h\eps}^i + \Ph (\ppi(\mm_{h\eps}^{i + 1/2}) - \PPi_h(\mm_{h\eps}^i,\mm_{h\eps}^{i-1}) )]}{\pphih}
\end{split}
\end{equation}
for all $\pphih \in \Vh$. \\
Output:
Sequence of approximations $\left\{\mm_{h\eps}^i\right\}_{i \in \N_0}$.
\qed
\end{algorithm}

In the following theorem,
we establish the stability and convergence of the approximations obtained with Algorithm~\ref{mpslabel:alg:mps_fp}.
The proof is postponed to Section~\ref{mpslabel:sec:proofs2}.

\begin{theorem} \label{mpslabel:thm:main:fp}
{\textrm{(i)}}
Suppose that $\mm_h^0 \in \Mh$.
If $k=o(h^2)$ as $h,k \to 0$, there exist thresholds $h_0>0$ and $k_0 > 0$ such that,
for all $h < h_0$ and $k < k_0$, \eqref{mpslabel:eq:mps:fp} admits solutions $\mm_{h\eps}^{i + 1} \in \Mh$
and $\rr_{h\eps}^i \in \Vh$ with $\norm[h]{\Interp[\mm_{h\eps}^{i+1/2} \times \rr_{h\eps}^i]} \leq \eps$ for all $i \in \N_0$.
In particular, the scheme preserves the unit-length constraint at the nodes of the triangulation
for all time-steps.
The thresholds $h_0,k_0$ depend only on the mesh parameter $\kappa$ and the problem data.\\
{\textrm{(ii)}}
Under the assumptions of part~{\textrm{(i)}}, for all $h < h_0$, $k < k_0$, and $J \in \N$,
the scheme satisfies the discrete energy identity
\begin{align} \label{mpslabel:eq:energylaw:fp}
& \E(\mm_{h\eps}^J) + \alpha k \sum_{i=0}^{J-1} \norm[h]{d_t \mm_{h\eps}^{i+1}}^2 = \E(\mm_h^0) \\
\notag & \
- k \sum_{i=0}^{J-1} \inner[h]{\mm_{h\eps}^{i+1/2} \times [\rr_{h\eps}^i + \Ph(\ppi(\mm_{h\eps}^{i + 1/2}) - \PPi_h(\mm_{h\eps}^i,\mm_{h\eps}^{i-1}))]}{\Ph \heff(\mm_{h\eps}^{i + 1/2}) - \alpha \, d_t \mm_{h\eps}^{i+1}}.
\end{align}
{\textrm{(iii)}}
Let $T>0$.
Let $J \in \N$ be the smallest integer such that $T \leq kJ$.
Under the assumptions of part~{\textrm{(i)}}, if $\eps = \OO(h)$ and $\{\mm_h^0\}_{h > 0}$ is bounded in $\HH^1(\Omega)$ as $h, \eps \to 0$,
there exist thresholds $0<h_0^* \leq h_0$, $0<k_0^* \leq k_0$, and $\eps_0^* > 0$ such that,
for all $h < h_0^*$, $k < k_0^*$, $\eps < \eps_0^*$, and $1 \leq j \leq J$,
we have the stability estimate
\begin{equation} \label{mpslabel:eq:stability:fp}
\norm[\HH^1(\Omega)]{\mm_{h\eps}^j}^2
+ k \sum_{i=0}^{j-1} \norm[\LL^2(\Omega)]{d_t \mm_{h\eps}^{i+1}}^2
\leq C.
\end{equation}
The constant $C>0$ and the thresholds $h_0^*,k_0^*,\eps_0^*$
depend only on the mesh parameter $\kappa$, the final time $T$, and the problem data.\\
{\textrm{(iv)}}
Additionally to the assumptions of part~{\textrm{(iii)}}, assume $\mm_h^0 \to \mm^0$ in $\HH^1(\Omega)$ as $h \to 0$, and suppose that $\ppi_h$ is consistent~\eqref{mpslabel:eq:consistency} with $\ppi$.
Then, there exist $\mm \in \HH^1(\Omega_T) \cap L^{\infty}(0,T;\HH^1(\Omega ; \sphere))$
and a subsequence of $\{ \mm_{h\eps k} \}$ (not relabeled)
which converges towards $\mm$ as $h,k,\eps \to 0$.
Specifically, $\mm_{h\eps k}\vert_{\Omega_T} \weakstarto \mm$ in $L^{\infty}(0,T;\HH^1(\Omega ; \sphere))$
and $\mm_{h\eps k}\vert_{\Omega_T} \weakto \mm$ in $\HH^1(\Omega_T)$
as $h,k,\eps \to 0$.
The limit function $\mm$ satisfies the conditions {\textrm{(i)}}--{\textrm{(iv)}} of Definition~\ref{mpslabel:def:weak}.
\end{theorem}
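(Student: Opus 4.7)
\medskip\noindent\textbf{Plan for the proof of Theorem~\ref{mpslabel:thm:main:fp}.}

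\smallskip\noindent\emph{Part~(i): well-posedness and unit-length preservation.}
Assuming $k=o(h^2)$, I would proceed by induction on $i\in\N_0$. Starting from $\mm_h^0\in\Mh$, Proposition~\ref{mpslabel:prop:fp}(i)--(iii) yields, under sufficiently small thresholds $h_0,k_0$, a unique fixed-point sequence $\{\eeta_h^{i,\ell}\}_\ell$ that satisfies~\eqref{mpslabel:eq:mps_eta_fp} and contracts with some rate $q<1$; hence the stopping criterion~\eqref{mpslabel:eq:stopping_fp} is met at some smallest index $\ell^*$, and the update $\mm_{h\eps}^{i+1}:=2\,\eeta_h^{i,\ell^*+1}-\mm_{h\eps}^i$ lies in $\Mh$ because each $\eeta_h^{i,\ell}$ satisfies the key nodal identity $\abs{\eeta_h^{i,\ell+1}(\zz)}^2 = \mm_{h\eps}^i(\zz)\cdot\eeta_h^{i,\ell+1}(\zz)$, derived by testing~\eqref{mpslabel:eq:mps_eta_fp} with nodal basis functions and exploiting the pointwise orthogonality of the cross product. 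Defining $\rr_{h\eps}^i$ as in the statement makes~\eqref{mpslabel:eq:mps:fp} a tautological rewrite of~\eqref{mpslabel:eq:mps_eta1_imex}, and the stopping criterion directly encodes $\norm[h]{\Interp[\mm_{h\eps}^{i+1/2}\times\rr_{h\eps}^i]}\leq\eps$.

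\smallskip\noindent\emph{Part~(ii): discrete energy identity.}
The key step is to test~\eqref{mpslabel:eq:mps:fp} with the admissible discrete function $\pphih = \Ph\heff(\mm_{h\eps}^{i+1/2}) - \alpha\,d_t\mm_{h\eps}^{i+1}\in\Vh$. On the left-hand side, the defining property~\eqref{mpslabel:eq:pseudo-projection} of $\Ph$ gives $\inner[h]{d_t\mm_{h\eps}^{i+1}}{\Ph\heff(\mm_{h\eps}^{i+1/2})}=\dual{\heff(\mm_{h\eps}^{i+1/2})}{d_t\mm_{h\eps}^{i+1}}$, which by~\eqref{mpslabel:eq:heff}, the symmetry of $a(\cdot,\cdot)$, and the midpoint identity
\[
a(\mm_{h\eps}^{i+1/2},d_t\mm_{h\eps}^{i+1})=\tfrac{1}{2k}\bigl(a(\mm_{h\eps}^{i+1},\mm_{h\eps}^{i+1})-a(\mm_{h\eps}^i,\mm_{h\eps}^i)\bigr)
\]
together with the telescoping of the $\inner[\Omega]{\ff}{\cdot}$ term collapses to $-k^{-1}[\E(\mm_{h\eps}^{i+1})-\E(\mm_{h\eps}^i)]$; the remaining $-\alpha\norm[h]{d_t\mm_{h\eps}^{i+1}}^2$ comes from the second term. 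On the right-hand side, the two terms $\inner[h]{\mm_{h\eps}^{i+1/2}\times\Ph\heff(\mm_{h\eps}^{i+1/2})}{\alpha d_t\mm_{h\eps}^{i+1}}$ and $\alpha\inner[h]{\mm_{h\eps}^{i+1/2}\times d_t\mm_{h\eps}^{i+1}}{\Ph\heff(\mm_{h\eps}^{i+1/2})}$ cancel by nodal antisymmetry of the scalar triple product (as in~\eqref{mpslabel:eq:mass-lumping}), while $\inner[h]{\uu\times\vv}{\vv}=0$ nodally eliminates the remaining diagonal contributions; only the perturbative term survives. Multiplying by $-k$ and summing over $i=0,\dots,J-1$ yields~\eqref{mpslabel:eq:energylaw:fp}.

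\smallskip\noindent\emph{Part~(iii): $\HH^1$-stability.}
Starting from~\eqref{mpslabel:eq:energylaw:fp}, I would estimate the right-hand perturbation by Cauchy--Schwarz and split the factor $\Ph\heff(\mm_{h\eps}^{i+1/2})-\alpha d_t\mm_{h\eps}^{i+1}$. The $d_t$-part is absorbed by the dissipation term via Young's inequality. For the $\Ph\heff$-part, the uniform boundedness of $\ppi_h$ and consistency $\norm[\LL^2(\Omega)]{\ppi_h(\mm_{h\eps}^i)-\ppi_h(\mm_{h\eps}^{i-1})}\leq\Cpi\norm[\LL^2(\Omega)]{\mm_{h\eps}^i-\mm_{h\eps}^{i-1}}=\Cpi k\norm[\LL^2(\Omega)]{d_t\mm_{h\eps}^i}$ bounds $\ppi-\PPi_h$ by $\OO(k)$, which together with $\norm[h]{\Interp[\mm_{h\eps}^{i+1/2}\times\rr_{h\eps}^i]}\leq\eps$ produces factors small in $\eps+k$; meanwhile $\norm[\LL^2(\Omega)]{\Ph\heff(\mm_{h\eps}^{i+1/2})}$ is controlled by $\norm[\HH^1(\Omega)]{\mm_{h\eps}^{i+1/2}}$ (using boundedness of $\ppi$ and $\ff$) possibly modulo an inverse-inequality factor $h^{-1}$ that is tamed by the assumption $\eps=\OO(h)$. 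Coercivity of $a(\cdot,\cdot)$ modulo lower-order terms then turns $\E(\mm_{h\eps}^J)$ into an $\HH^1$-bound, and a discrete Gronwall inequality applied to the summed identity delivers~\eqref{mpslabel:eq:stability:fp}; this is the step I expect to be \emph{the technical crux}, because it is precisely the interplay between the residual $\rr_{h\eps}^i$, the explicit treatment of $\ppi$, and the discrete effective field that forces the conditions $k=o(h^2)$ and $\eps=\OO(h)$.

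\smallskip\noindent\emph{Part~(iv): convergence to a weak solution.}
From~\eqref{mpslabel:eq:stability:fp} the piecewise-linear reconstruction $\mm_{h\eps k}$ is uniformly bounded in $L^{\infty}(0,T;\HH^1(\Omega))\cap\HH^1(\Omega_T)$, so a standard Banach--Alaoglu / Rellich argument produces a subsequence with the stated weak/weak-$\star$ limits and with $\mm_{h\eps k}\to\mm$ strongly in $\LL^2(\Omega_T)$ and pointwise a.e.\ (after a further subsequence); combining this with $\abs{\mm_{h\eps}^i(\zz)}=1$ for all $\zz\in\Nh$ and the norm equivalence~\eqref{mpslabel:eq:normEquivalence} yields $\mm\in L^\infty(0,T;\HH^1(\Omega;\sphere))$, covering conditions (i)--(ii) of Definition~\ref{mpslabel:def:weak}; condition (ii) on the trace follows from $\mm_h^0\to\mm^0$ in $\HH^1(\Omega)$. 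To establish (iii), I rewrite~\eqref{mpslabel:eq:mps:fp} using time-independent test functions $\vvphi_h=\interp[\vvphi(t_{i+1/2})]$ for $\vvphi\in C^\infty(\overline{\Omega_T})$, sum over $i$, and pass to the limit: the bilinear term with $\Ph\heff$ converges to $\dual{\heff(\mm)}{\vvphi\times\mm}$ by consistency of $\Ph$ (which is essentially~\eqref{mpslabel:eq:pseudo-projection} combined with~\eqref{mpslabel:eq:normEquivalence}) and consistency~\eqref{mpslabel:eq:consistency} of $\ppi_h$, while the residual terms vanish because $\eps=\OO(h)\to 0$ and $\PPi_h-\ppi_h\to 0$ in the limit. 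Finally, condition (iv) follows by taking the $\liminf$ in~\eqref{mpslabel:eq:energylaw:fp}: the left-hand side is weakly lower semicontinuous, $\E(\mm_h^0)\to\E(\mm^0)$ by strong convergence in $\HH^1$, and the perturbation sum on the right-hand side vanishes by the same estimates used in (iii).
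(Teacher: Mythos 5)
Your outline of parts~(i), (ii), and (iv) matches the paper: part~(i) is indeed a direct consequence of Proposition~\ref{mpslabel:prop:fp}, part~(ii) follows by testing~\eqref{mpslabel:eq:mps:fp} with $\alpha\, d_t\mm_{h\eps}^{i+1}-\Ph\heff(\mm_{h\eps}^{i+1/2})$ and using~\eqref{mpslabel:eq:aux_eq} together with the cyclicity of the scalar triple product, and part~(iv) is the standard compactness argument along the lines of Theorem~\ref{mpslabel:thm:main}{\rm(iii)}.

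For part~(iii), however, there is a genuine gap. You propose to start from the energy identity~\eqref{mpslabel:eq:energylaw:fp}, whose perturbation term is $-k\sum_i\inner[h]{\mm_{h\eps}^{i+1/2}\times[\rr_{h\eps}^i+\PP^i]}{\Ph\heff(\mm_{h\eps}^{i+1/2})-\alpha\,d_t\mm_{h\eps}^{i+1}}$, where $\PP^i:=\Ph(\ppi(\mm_{h\eps}^{i+1/2})-\PPi_h(\mm_{h\eps}^i,\mm_{h\eps}^{i-1}))$. The $\rr_{h\eps}^i$-part is fine, since the stopping criterion provides the factor $\eps$ to compensate the $\OO(h^{-1})$ coming from $\norm[h]{\Ph\heff(\cdot)}\lesssim h^{-1}\norm[\HH^1(\Omega)]{\cdot}$, and $\eps=\OO(h)$ does the job. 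But the $\PP^i$-part has \emph{no} $\eps$-factor, so the contribution $-k\sum_i\inner[h]{\mm_{h\eps}^{i+1/2}\times\PP^i}{\Ph\heff(\mm_{h\eps}^{i+1/2})}$ carries an uncompensated $h^{-1}$ which, after summing the $J=\OO(T/k)$ terms, blows up. Your attempt to compensate by claiming that $\ppi(\mm_{h\eps}^{i+1/2})-\PPi_h(\mm_{h\eps}^i,\mm_{h\eps}^{i-1})$ is $\OO(k)$ is incorrect: this holds only for the inner two-level difference $\ppi_h(\mm_{h\eps}^i)-\ppi_h(\mm_{h\eps}^{i-1})$, while the residual $\ppi(\cdot)-\ppi_h(\cdot)$ is merely consistent (i.e., $o(1)$ as $h\to0$), not $\OO(k)$.

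The paper avoids this entirely by \emph{not} starting from~\eqref{mpslabel:eq:energylaw:fp}. Instead it tests~\eqref{mpslabel:eq:mps:fp} with the enlarged function $\pphih=\alpha\,d_t\mm_{h\eps}^{i+1}-\Ph\heff(\mm_{h\eps}^{i+1/2})+\PP^i$. The cyclicity of the scalar triple product then eliminates the cross-term between $\PP^i$ and $\Ph\heff$, and the perturbation splits as $-k\inner[h]{d_t\mm_{h\eps}^{i+1}}{\PP^i}+k\inner[h]{\mm_{h\eps}^{i+1/2}\times\rr_{h\eps}^i}{\pphih}$. The first piece is now a clean $\LL^2$-type pairing, $\inner[h]{d_t\mm_{h\eps}^{i+1}}{\PP^i}=\inner[\Omega]{d_t\mm_{h\eps}^{i+1}}{\ppi(\mm_{h\eps}^{i+1/2})-\PPi_h(\mm_{h\eps}^i,\mm_{h\eps}^{i-1})}$ by~\eqref{mpslabel:eq:pseudo-projection}, with \emph{no} $h^{-1}$, and is absorbed by Young's inequality into the dissipation term; the second piece is the $\rr_{h\eps}^i$-residual, which carries the $\eps$-factor and is tamed by $\eps=\OO(h)$. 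Equivalently, your route can be repaired by using~\eqref{mpslabel:eq:mps:fp} again with test function $\PP^i$ to rewrite $\inner[h]{\mm_{h\eps}^{i+1/2}\times\PP^i}{\Ph\heff(\mm_{h\eps}^{i+1/2})}$ into $h^{-1}$-free terms, but without this algebraic step the stability estimate~\eqref{mpslabel:eq:stability:fp} does not follow.
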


\subsubsection{Newton iteration}\label{mpslabel:sec:newton}
Based on the Newton scheme, in~\cite[Section~1.4.1]{bbnp2014} the authors employ a linearization of the nonlinear system~\eqref{mpslabel:eq:mps} in the ideal midpoint scheme with simplified effective field, i.e., without nonlocal contributions and without DMI.
Their 2D numerical experiments give hope for a less restrictive CFL condition than for the fixed-point iteration from Section~\ref{mpslabel:sec:fp}.

For three dimensional micromagnetics and considering the full effective field~\eqref{mpslabel:eq:heff}, in Section~\ref{mpslabel:sec:apply_newton_to_mps} we apply Newton's method to the nonlinear system of equations~\eqref{mpslabel:eq:mps_eta1_imex} resulting in the following iteration:
Let $\eps>0$ denote some tolerance.
Set $\eeta_h^{i,0} := \mm_{h\eps}^i$.
For $\ell \in \N_0$, given $\eeta_h^{i,\ell} \in \Vh$,
compute $\uu_h^{i,\ell} \in \Vh$ such that, for all $\pphi_h \in \Vh$, it holds that
\begin{align} \label{mpslabel:eq:mps_eta_newton}
\notag & \inner[h]{\uu_h^{i,\ell}}{\pphih}
+ \frac{k}{2} \inner[h]{ \uu_h^{i,\ell} \times \Ph (\heffloc(\eeta_h^{i,\ell}) + \PPi_h(\mm_{h\eps}^i,\mm_{h\eps}^{i-1}))}{\pphih}
+ \alpha \inner[h]{\uu_h^{i,\ell} \times \mm_{h\eps}^i}{\pphih} \\
&\quad + \frac{k}{2} \inner[h]{\eeta_h^{i,\ell} \times \Ph (\heffloc(\uu_h^{i,\ell}) - \ff)}{\pphi_h}
 = \inner[h]{\mm_{h\eps}^i - \eeta_h^{i,\ell}}{\pphih} \\
\notag &\qquad\quad - \frac{k}{2} \inner[h]{ \eeta_h^{i,\ell} \times \Ph (\heffloc(\eeta_h^{i,\ell}) + \PPi_h(\mm_{h\eps}^i,\mm_{h\eps}^{i-1}))}{\pphih}
- \alpha \inner[h]{\eeta_h^{i,\ell} \times \mm_{h\eps}^i}{\pphih},
\end{align}
and define $\eeta_h^{i,\ell+1} := \eeta_h^{i,\ell} + \uu_h^{i,\ell}$
until
\begin{equation} \label{mpslabel:eq:stopping_newton}
\norm[h]{\Interp\big[ \uu_h^{i,\ell} \times \Ph (\heffloc(\uu_h^{i,\ell}) - \ff) \big]}
\le \eps.
\end{equation}

If $\ell^* \in \N_0$ is the smallest integer for which the stopping criterion~\eqref{mpslabel:eq:stopping_newton} is satisfied,
the approximate magnetization at the new time-step is defined as
$\mm_{h\eps}^{i+1} := 2 \, \eeta_h^{i,\ell^*+1} - \mm_{h\eps}^i$.

For all $i \in \N_0$,
let $\rr_{h\eps}^i = \Interp\big[ \uu_h^{i,\ell^*} \times \Ph (\heffloc(\uu_h^{i,\ell^*}) - \ff) \big] \in \Vh$.
In view of the stopping criterion~\eqref{mpslabel:eq:stopping_newton}, it holds that $\norm[h]{\rr_{h\eps}^i} \leq \eps$.
With this definition, the proposed linearization of Algorithm~\ref{mpslabel:alg:mps} based
on the Newton method is covered by the following algorithm.

\begin{algorithm}[practical midpoint scheme, Newton iteration] \label{mpslabel:alg:mps_newton}
Input:
$\mm_{h\eps}^0 := \mm_h^0 \in \Vh$. \\
Loop:
For all $i \in \N_0$, use Newton's method~\eqref{mpslabel:eq:mps_eta_newton}--\eqref{mpslabel:eq:stopping_newton} with initial guess $\eeta_h^{i, 0} := \mm_{h\eps}^i$ to compute $\mm_{h\eps}^{i + 1} \in \Vh$ and $\rr_{h\eps}^i \in \Vh$ with $\norm[h]{\rr_{h\eps}^i} \leq \eps$ such that
\begin{equation} \label{mpslabel:eq:mps:newton}
\begin{split}
\inner[h]{d_t \mm_{h\eps}^{i+1}}{\pphih}
& = - \inner[h]{ \mm_{h\eps}^{i + 1/2} \times \Ph\heff(\mm_{h\eps}^{i + 1/2})}{\pphih}
+ \alpha \inner[h]{\mm_{h\eps}^{i+1/2} \times d_t \mm_{h\eps}^{i+1}}{\pphih}
 \\
& \quad + \inner[h]{\rr_{h\eps}^i}{\pphih} + \inner[h]{\mm_{h\eps}^{i+1/2} \times \Ph (\ppi(\mm_{h\eps}^{i + 1/2}) - \PPi_h(\mm_{h\eps}^i,\mm_{h\eps}^{i-1}) )}{\pphih}
\end{split}
\end{equation}
for all $\pphih \in \Vh$.\\
Output: Sequence of approximations $\left\{\mm_{h\eps}^i\right\}_{i \in \N_0}$.
\qed
\end{algorithm}

The results on Algorithm~\ref{mpslabel:alg:mps_newton} are stated in Lemma~\ref{mpslabel:le:newton:Linfty} ($\LL^\infty$-bound), Theorem~\ref{mpslabel:thm:main:newton} (stability), and Theorem~\ref{mpslabel:thm:main:newton:convergence} (well-posedness) below.
Compared to Algorithm~\ref{mpslabel:alg:mps_fp}, our analysis is more involved:
Precisely, for $i < J$ the proof of Theorem~\ref{mpslabel:thm:main:newton:convergence} requires $i$-independent bounds on $\norm[\LL^\infty(\Omega)]{\mm_{h\eps}^i}$ and $\E(\mm_{h\eps}^i)$ in order to guarantee well-posedness of Algorithm~\ref{mpslabel:alg:mps_newton}, while Lemma~\ref{mpslabel:le:newton:Linfty} and Theorem~\ref{mpslabel:thm:main:newton} require termination of~\eqref{mpslabel:eq:mps_eta_newton}--\eqref{mpslabel:eq:stopping_newton} so that $\mm_{h\eps}^{i+1}$ is well-defined.\\

In contrast to the fixed-point iteration~\eqref{mpslabel:eq:mps_eta_fp}--\eqref{mpslabel:eq:stopping_fp}, the Newton iteration~\eqref{mpslabel:eq:mps_eta_newton}--\eqref{mpslabel:eq:stopping_newton} does not inherently preserve discrete unit-length, i.e., in general $|\mm_{h\eps}^{i+1}(\zz)| \not= |\mm_{h\eps}^{i}(\zz)|$ for $\zz\in\NN_h$.
However, assuming well-posedness of Algorithm~\ref{mpslabel:alg:mps_newton}, in the following lemma we establish uniform $\LL^\infty(\Omega)$-boundedness of the approximations obtained with Algorithm~\ref{mpslabel:alg:mps_newton}.
\begin{lemma}\label{mpslabel:le:newton:Linfty}
Suppose $h, k, \eps > 0$, $\mm_{h}^0 \in \Mh$ and let $J \in \N$ be the smallest integer such that $T \leq kJ$.
Let $0 \le i < J$ and suppose that the Newton iteration~\eqref{mpslabel:eq:mps_eta_newton}--\eqref{mpslabel:eq:stopping_newton} in Algorithm~\ref{mpslabel:alg:mps_newton} terminates for all $0 \le n \le i$, i.e., the sequences $\{\mm_{h\eps}^n\}_{n=0}^{i + 1}, \{\rr_{h\eps}^n\}_{n=0}^{i} \subset \Vh$ are the output of Algorithm~\ref{mpslabel:alg:mps_newton} and satisfy~\eqref{mpslabel:eq:mps:newton} with $\norm[h]{\rr_{h\eps}^n} \le \eps$ for all $0 \le n \le i$. \\
{\textrm{(i)}} If $\eps = \mathcal{O}(h^{3/2})$ as $h,k,\eps \to 0$, then there exists a constant $C_\infty > 0$ and thresholds $h_0>0$, $k_0 > 0$, and $\eps_0 > 0$ such that,
for all $h < h_0$, $k < k_0$, and $\eps < \eps_0$, it holds that $\norm[\LL^\infty(\Omega)]{\mm_{h\eps}^{n+1}} \le C_\infty$ uniformly for all $0 \le n \le i$.
The thresholds $h_0,k_0, \eps_0$ depend only on the mesh parameter $\kappa$ and the problem data, while the bound $C_\infty > 0$ depends only on $\kappa$, $\eps h^{-3/2} \lesssim 1$, and the final time $T > 0$, but not on the integer $i < J$.\\
{\textrm{(ii)}} If $\eps = o(h^{3/2})$, then there holds $\norm[{L^\infty([0, t_{i+1}]\times\Omega)}]{\mathcal{I}_h(|\mm_{h\eps k}|^2) - 1} \to 0$ as $h, k, \eps \to 0$.
\end{lemma}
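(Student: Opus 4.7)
The plan is to exploit the mass-lumped structure of~\eqref{mpslabel:eq:mps:newton} to distill a scalar, nodewise identity for the evolution of $\abs{\mm_{h\eps}^i(\zz)}^2$ driven by the residual bound $\norm[h]{\rr_{h\eps}^i} \le \eps$, and then to run a discrete Gronwall-type bootstrap.

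First, I test~\eqref{mpslabel:eq:mps:newton} with $\pphih = \vphi_\zz \, \mm_{h\eps}^{i+1/2}(\zz) \in \Vh$ for each $\zz \in \Nh$. Every cross-product term on the right-hand side is pointwise perpendicular to $\mm_{h\eps}^{i+1/2}(\zz)$, and by~\eqref{mpslabel:eq:mass-lumping} the mass-lumped inner product localizes to a single nodal contribution with weight $\beta_\zz > 0$, so only the discrete time derivative and the residual $\rr_{h\eps}^i$ survive. Dividing by $\beta_\zz$ and using $\mm^{i+1/2}(\zz) \cdot d_t\mm^{i+1}(\zz) = (\abs{\mm^{i+1}(\zz)}^2 - \abs{\mm^i(\zz)}^2)/(2k)$ yields the key identity
\begin{equation*}
\abs{\mm_{h\eps}^{i+1}(\zz)}^2 - \abs{\mm_{h\eps}^i(\zz)}^2 = 2k \, \mm_{h\eps}^{i+1/2}(\zz) \cdot \rr_{h\eps}^i(\zz).
\end{equation*}
From $\norm[h]{\rr_{h\eps}^i} \le \eps$, combined with the norm equivalence~\eqref{mpslabel:eq:normEquivalence} and the $\LL^2$--$\LL^\infty$ inverse estimate on $\Vh$ (valid by $\kappa$-quasi-uniformity), I obtain $\norm[\LL^\infty(\Omega)]{\rr_{h\eps}^i} \le C h^{-3/2} \eps$.

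Telescoping the key identity, using $\abs{\mm_h^0(\zz)} = 1$ and $\abs{\mm^{i+1/2}(\zz)} \le \tfrac{1}{2}(\abs{\mm^{i+1}(\zz)} + \abs{\mm^i(\zz)})$, and setting $B_{n+1} := \max_{j \le n+1} \max_{\zz \in \Nh} \abs{\mm_{h\eps}^j(\zz)}$, I arrive at a quadratic inequality of the form $B_{n+1}^2 \le 1 + 2 C T (\eps h^{-3/2}) B_{n+1}$, where the constant $C > 0$ depends only on $\kappa$ and the problem data and $(n+1)k \le T + k_0$ has been used. Under $\eps = \OO(h^{3/2})$, solving this quadratic yields an $n$-independent upper bound $B_{n+1} \le C_\infty$, which proves (i). For (ii), the same chain with $\eps = o(h^{3/2})$ immediately delivers the nodal convergence $\max_{\zz, n} \lvert \abs{\mm_{h\eps}^n(\zz)}^2 - 1 \rvert \to 0$.

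To lift this nodal convergence to the space-time $L^\infty$-norm of $\interp(\abs{\mm_{h\eps k}}^2) - 1$, I use that $\interp$ is piecewise linear in space so that its spatial $L^\infty$-norm reduces to the maximum over nodes, while in time $\mm_{h\eps k}(\zz, t) = (1 - \tau) \mm_{h\eps}^n(\zz) + \tau \mm_{h\eps}^{n+1}(\zz)$ with $\tau \in [0, 1]$. The algebraic decomposition $\abs{(1-\tau) a + \tau b}^2 = (1-\tau)\abs{a}^2 + \tau\abs{b}^2 - \tau(1-\tau)\abs{a-b}^2$ then reduces the remaining task to controlling the oscillation $\abs{\mm^{n+1}(\zz) - \mm^n(\zz)}^2 = k^2 \abs{d_t\mm_{h\eps}^{n+1}(\zz)}^2$ uniformly, which I would derive from a pointwise bound on $d_t\mm_{h\eps}^{n+1}$ extracted from the scheme equation combined with the $L^2$-in-time stability bound of Theorem~\ref{mpslabel:thm:main:newton} and a nodal inverse estimate. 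I expect the delicate step to be this last one: the nodal bound from (i) only pins $\abs{\mm_{h\eps}^n(\zz)}$ close to $1$ and gives no a priori control on the relative direction of consecutive iterates, so uniform smallness of $\abs{\mm^{n+1}(\zz) - \mm^n(\zz)}^2$ requires a careful interplay between the pointwise scheme equation, the residual bound, and the discrete energy/stability estimate, possibly invoking an implicit CFL-type coupling between $k$ and $h$.
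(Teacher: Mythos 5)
Your approach to part~(i) follows the same essential route as the paper's: test~\eqref{mpslabel:eq:mps:newton} with $\pphi_h = \mm_{h\eps}^{n+1/2}(\zz)\varphi_\zz$, exploit the mass-lumped localization and the pointwise orthogonality of the cross-product terms to isolate the nodal identity $|\mm_{h\eps}^{n+1}(\zz)|^2 - |\mm_{h\eps}^n(\zz)|^2 = 2k\,\mm_{h\eps}^{n+1/2}(\zz)\cdot\rr_{h\eps}^n(\zz)$, and then turn $\norm[h]{\rr_{h\eps}^n}\le\eps$ plus quasi-uniformity ($\beta_\zz \simeq h^3$) into the factor $\eps h^{-3/2}$. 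From that point your post-processing differs slightly from the paper's: you telescope over $n$ and close a quadratic inequality $B_{n+1}^2 \le 1 + C(T+k_0)(\eps h^{-3/2})B_{n+1}$ in the running maximum $B_{n+1}$, whereas the paper instead rearranges the identity into a one-step recursion $|\mm^{n+1}(\zz)|^2 \le \tfrac{1+C_{h\eps}k}{1-C_{h\eps}k}|\mm^n(\zz)|^2 + C_{h\eps}k$ (valid for $k<1/(2C_{h\eps})$) and closes with a discrete-Gronwall/geometric-sum argument. Both are correct; your variant is slightly more compact, the paper's makes the dependence on $T$ through $\exp(4C_{h\eps}T)$ very explicit and is the one actually reused for the two-sided bound in~(ii).

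For part~(ii), the nodal half of your argument again matches the paper (two-sided telescoping under $\eps=o(h^{3/2})$ gives $\max_{\zz,n}\big||\mm_{h\eps}^n(\zz)|^2-1\big|\to 0$). Your concern about the passage from nodal control at $t=t_n$ to the claimed $L^\infty$-bound on $\interp(|\mm_{h\eps k}|^2)-1$ over the full space-time cylinder is well-founded and worth flagging: the identity $|(1-\tau)a+\tau b|^2 = (1-\tau)|a|^2+\tau|b|^2-\tau(1-\tau)|a-b|^2$ shows the interpolated squared magnitude at node $\zz$ can dip \emph{below} the nodal values by up to $\tfrac14|\mm_{h\eps}^{n+1}(\zz)-\mm_{h\eps}^n(\zz)|^2$, and the nodal bounds say nothing about the angular change between consecutive iterates. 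The paper's proof of~(ii) stops at the two-sided nodal estimate and simply states that~(ii) follows; it does not address the interpolation term. You should also note that the remedy you sketch is not yet sufficient as stated: a nodal inverse estimate on $d_t\mm_{h\eps}^{n+1}$ extracted from the scheme and from~\eqref{mpslabel:eq:boundednessPh} gives $|\mm_{h\eps}^{n+1}(\zz)-\mm_{h\eps}^n(\zz)|^2 \lesssim k^2h^{-5}$, i.e., you would need $k=o(h^{5/2})$, which is strictly stronger than the CFL condition $k=o(h^{7/3})$ underlying Theorem~\ref{mpslabel:thm:main:newton:convergence}; the route via the $L^2$-in-time stability bound is even weaker ($k=o(h^3)$). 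So either an additional hypothesis is needed, or the statement~(ii) should be read as the uniform convergence of $\interp(|\mm_{h\eps}^n|^2)$ at the discrete time points (equivalently, with the time-affine reconstruction applied \emph{after} squaring), under which your nodal estimate already closes the argument.
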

Assuming well-posedness of Algorithm~\ref{mpslabel:alg:mps_newton}, in the following theorem we establish the stability and convergence of the approximations obtained with Algorithm~\ref{mpslabel:alg:mps_newton}.
The proof is postponed to Section~\ref{mpslabel:sec:newton:proof}.
\begin{theorem} \label{mpslabel:thm:main:newton}
Let $T>0$ and suppose that $\{\Th\}_{h>0}$ is a $\kappa$-quasi-uniform family of triangulations.
Suppose $h, k, \eps > 0$, $\mm_{h}^0 \in \Mh$ and let $J \in \N$ be the smallest integer such that $T \leq kJ$.
Let $0 \le i < J$ and suppose that the Newton iteration~\eqref{mpslabel:eq:mps_eta_newton}--\eqref{mpslabel:eq:stopping_newton} in Algorithm~\ref{mpslabel:alg:mps_newton} terminates for all $0 \le n \le i$, i.e., the sequences $\{\mm_{h\eps}^n\}_{n=0}^{i + 1}, \{\rr_{h\eps}^n\}_{n=0}^{i} \subset \Vh$ are the output of Algorithm~\ref{mpslabel:alg:mps_newton} and satisfy~\eqref{mpslabel:eq:mps:newton} with $\norm[h]{\rr_{h\eps}^n} \le \eps$ for all $0 \le n \le i$. \\
{\textrm{(i)}} Under these assumptions, the scheme satisfies the discrete energy identity
\begin{align} \label{mpslabel:eq:energylaw:newton}
& \E(\mm_{h\eps}^{i+1}) 
+ \alpha k \sum_{n=0}^{i} \norm[h]{d_t \mm_{h\eps}^{n+1}}^2 
= \E(\mm_h^0) \\
\notag&\; - k \sum_{n=0}^{i} \inner[h]{\rr_{h\eps}^n + \mm_{h\eps}^{n+1/2} \times \Ph(\ppi(\mm_{h\eps}^{n + 1/2}) - \PPi_h(\mm_{h\eps}^n,\mm_{h\eps}^{n-1}))}{\Ph \heff(\mm_{h\eps}^{n + 1/2}) - \alpha \, d_t \mm_{h\eps}^{n+1}}.
\end{align}
{\textrm{(ii)}}
If $k = o(h^2)$, $\eps = \OO(h^{3/2})$ and $\{\mm_h^0\}_{h > 0}$ is bounded in $\HH^1(\Omega)$ as $k, h, \eps \to 0$, there exist thresholds $h_0 > 0$, $k_0 > 0$, and $0 < \eps_0 \leq \alpha$ such that, for all $h < h_0$, $k < k_0$, $\eps < \eps_0$ we have the stability estimate
\begin{equation} \label{mpslabel:eq:mps:newton:stability}
\norm[\HH^1(\Omega)]{\mm_{h\eps}^{i+1}}^2
+ k \sum_{n=0}^{i} \norm[\LL^2(\Omega)]{d_t \mm_{h\eps}^{n+1}}^2
\leq C.
\end{equation}
The constant $C>0$ and the thresholds $h_0, k_0, \eps_0$
depend only on the mesh parameter $\kappa$, the final time $T$, and the problem data.\\
{\textrm{(iii)}}
Additionally to the assumptions of part~{\textrm{(ii)}}, suppose that $\mm_h^0 \to \mm^0$ in $\HH^1(\Omega)$ as $h \to 0$, and that $\ppi_h$ is consistent~\eqref{mpslabel:eq:consistency} with $\ppi$.
Then, there exist $\mm \in \HH^1(\Omega_T) \cap L^{\infty}(0,T;\HH^1(\Omega ; \sphere))$
and a subsequence of $\{ \mm_{h\eps k} \}$ (not relabeled)
which converges towards $\mm$ as $h,k,\eps \to 0$.
Specifically, $\mm_{h\eps k} \weakstarto \mm$ in $L^{\infty}(0,T;\HH^1(\Omega ; \sphere))$
and $\mm_{h\eps k} \weakto \mm$ in $\HH^1(\Omega_T)$
as $h,k,\eps \to 0$.
The limit function $\mm$ satisfies the conditions {\textrm{(i)}}--{\textrm{(iv)}} of Definition~\ref{mpslabel:def:weak}.
\end{theorem}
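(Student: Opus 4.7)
The strategy is to mirror the proof of Theorem~\ref{mpslabel:thm:main:fp} for the fixed-point scheme, but the decisive new difficulty is that Algorithm~\ref{mpslabel:alg:mps_newton} does not preserve nodal unit length. The plan therefore leans crucially on the uniform $\LL^\infty$-bound and the asymptotic unit-length property of Lemma~\ref{mpslabel:le:newton:Linfty} to substitute for the pointwise constraint that was available throughout the fixed-point analysis.

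For part~\textrm{(i)}, the plan is to test~\eqref{mpslabel:eq:mps:newton} with $\pphi_h = \Ph \heff(\mm_{h\eps}^{n+1/2}) - \alpha\, d_t \mm_{h\eps}^{n+1}$. Node-wise orthogonality, $(\svec \times \aa) \cdot \aa = 0$ together with $(\svec \times \aa) \cdot \bb = -(\svec \times \bb) \cdot \aa$ evaluated via the mass-lumped formula~\eqref{mpslabel:eq:mass-lumping}, forces the two LLG cross-product contributions to cancel, leaving only the $\rr_{h\eps}^n$-perturbation and the $\ppi$-mismatch on the right. The defining property~\eqref{mpslabel:eq:pseudo-projection} of $\Ph$ converts the left-hand term into $\dual{\heff(\mm_{h\eps}^{n+1/2})}{d_t\mm_{h\eps}^{n+1}}$, and symmetry of $a$ combined with the midpoint identity $a(\mm_{h\eps}^{n+1/2}, d_t\mm_{h\eps}^{n+1}) = \frac{1}{2k}[a(\mm_{h\eps}^{n+1}, \mm_{h\eps}^{n+1}) - a(\mm_{h\eps}^{n}, \mm_{h\eps}^{n})]$ turns this into a telescoping difference $-k^{-1}[\E(\mm_{h\eps}^{n+1}) - \E(\mm_{h\eps}^{n})]$ after absorbing the Zeeman contribution from~\eqref{mpslabel:eq:heff}. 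Summing over $n = 0, \dots, i$ yields~\eqref{mpslabel:eq:energylaw:newton}.

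For part~\textrm{(ii)}, starting from~\eqref{mpslabel:eq:energylaw:newton}, uniform positive definiteness of $\{\Amat_d\}$ together with the $\LL^\infty$-bound $\norm[\LL^\infty(\Omega)]{\mm_{h\eps}^{n}} \le C_\infty$ provided by Lemma~\ref{mpslabel:le:newton:Linfty}{\textrm{(i)}} (which requires $\eps = \OO(h^{3/2})$) gives the coercivity $\E(\mm_{h\eps}^{i+1}) \gtrsim \norm[\HH^1(\Omega)]{\mm_{h\eps}^{i+1}}^2 - C$ after absorbing the $\ppi$- and $\ff$-contributions. The consistency sum on the right of~\eqref{mpslabel:eq:energylaw:newton} is bounded using $\norm[h]{\rr_{h\eps}^n} \le \eps$, the $\LL^\infty$-bound on $\mm_{h\eps}^{n+1/2}$, uniform boundedness $\Cpi$ of $\ppi_h$, and the a priori estimate $\norm[h]{\Ph \heff(\mm_{h\eps}^{n+1/2})} \lesssim \norm[\HH^1(\Omega)]{\mm_{h\eps}^{n+1/2}} + 1$ extracted from~\eqref{mpslabel:eq:pseudo-projection}--\eqref{mpslabel:eq:heff}. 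Young's inequality absorbs the $\alpha\, d_t\mm_{h\eps}^{n+1}$-contributions into the dissipation, leaving a remainder of the form $C + Ck\sum_{n} \norm[\HH^1(\Omega)]{\mm_{h\eps}^{n+1/2}}^2$, to which the discrete Gronwall lemma applies and yields~\eqref{mpslabel:eq:mps:newton:stability}.

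For part~\textrm{(iii)}, the stability bound delivers uniform boundedness of $\mm_{h\eps k}$ in $L^{\infty}(0,T;\HH^1(\Omega))$ and of $\partial_t \mm_{h\eps k}$ in $\LL^2(\Omega_T)$; Banach--Alaoglu and reflexivity extract a subsequence with $\mm_{h\eps k} \weakstarto \mm$ in $L^\infty(0,T;\HH^1(\Omega))$ and $\mm_{h\eps k} \weakto \mm$ in $\HH^1(\Omega_T)$, while Aubin--Lions gives strong convergence in $\LL^2(\Omega_T)$. The sphere constraint $|\mm|=1$ a.e.\ is recovered from Lemma~\ref{mpslabel:le:newton:Linfty}{\textrm{(ii)}}: since $\norm[L^\infty((0,t_{i+1})\times\Omega)]{\interp(|\mm_{h\eps k}|^2) - 1} \to 0$, standard interpolation estimates force $|\mm_{h\eps k}|^2 \to 1$ a.e.\ on $\Omega_T$. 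Passage to the limit in~\eqref{mpslabel:eq:mps:newton}, tested with a time-interpolant of a fixed $\vvphi \in \HH^1(\Omega_T)$, is then standard: linearity of the cross product, consistency~\eqref{mpslabel:eq:consistency} of $\ppi_h$, $\norm[h]{\rr_{h\eps}^n} \le \eps \to 0$, and the equivalence~\eqref{mpslabel:eq:normEquivalence} together produce~\eqref{mpslabel:eq:weak:variational}. The energy inequality~\eqref{mpslabel:eq:weak:energy} follows by weak lower semicontinuity applied to~\eqref{mpslabel:eq:energylaw:newton} once the consistency terms are shown to vanish, again using $\eps \to 0$ and strong $\LL^2$-convergence of $\ppi_h(\mm_{h\eps k})$ to $\ppi(\mm)$.

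The main obstacle is threading the perturbation bookkeeping in part~\textrm{(ii)}: unlike Algorithm~\ref{mpslabel:alg:mps_fp}, where nodal unit length yields an $\LL^\infty$-ceiling for free, Algorithm~\ref{mpslabel:alg:mps_newton} must route everything through Lemma~\ref{mpslabel:le:newton:Linfty}, which is only available under the coupling $\eps = \OO(h^{3/2})$. Ensuring that the Newton tolerance $\eps$ combined with the IMEX error on $\ppi$ does not disrupt either the coercivity bound or the Gronwall closure, and that the sharper regime $\eps = o(h^{3/2})$ suffices for the identification $|\mm|=1$ in part~\textrm{(iii)}, is the technical heart of the argument.
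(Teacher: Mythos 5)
Your proposal is correct and follows essentially the same strategy as the paper: derive the energy identity~\eqref{mpslabel:eq:energylaw:newton} by a suitable choice of test function in~\eqref{mpslabel:eq:mps:newton}, replace the nodal unit-length constraint of the fixed-point case with the uniform $\LL^\infty$-bound from Lemma~\ref{mpslabel:le:newton:Linfty}{\textrm{(i)}}, and then reduce parts~{\textrm{(ii)}}--{\textrm{(iii)}} to the argument for Theorem~\ref{mpslabel:thm:main:fp} with $\rr_{h\eps}^n$ in place of $\mm_{h\eps}^{n+1/2}\times\rr_{h\eps}^n$. Two small differences are worth flagging. First, for part~{\textrm{(i)}} the paper tests with $\pphi_h = \alpha\, d_t\mm_{h\eps}^{n+1} - \Ph\heff(\mm_{h\eps}^{n+1/2}) + \Ph(\ppi(\mm_{h\eps}^{n+1/2}) - \PPi_h(\mm_{h\eps}^n,\mm_{h\eps}^{n-1}))$ so as to reproduce verbatim the intermediate identity from the fixed-point proof, whereas you test with $\pphi_h = \Ph\heff(\mm_{h\eps}^{n+1/2}) - \alpha\,d_t\mm_{h\eps}^{n+1}$; both choices kill all cross-product contributions by the scalar triple-product antisymmetry evaluated through~\eqref{mpslabel:eq:mass-lumping}, but your choice lands on~\eqref{mpslabel:eq:energylaw:newton} directly without the subsequent algebraic rearrangement. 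Second, your a priori estimate $\norm[h]{\Ph\heff(\mm_{h\eps}^{n+1/2})} \lesssim \norm[\HH^1(\Omega)]{\mm_{h\eps}^{n+1/2}} + 1$ is missing a factor: the correct bound from~\eqref{mpslabel:eq:boundednessPh} and~\eqref{mpslabel:eq:heff:affine} is $\norm[h]{\Ph\heff(\mm_{h\eps}^{n+1/2})} \lesssim (1+\Cinv^2 h^{-2})^{1/2}(\norm[\HH^1(\Omega)]{\mm_{h\eps}^{n+1/2}} + 1)$, and the inverse-estimate factor $h^{-1}$ is precisely what is cancelled by the CFL coupling on $\eps$ before the Gronwall step closes. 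You attribute the whole coupling requirement to Lemma~\ref{mpslabel:le:newton:Linfty}{\textrm{(i)}}, which gives the right conclusion since $\eps = \OO(h^{3/2}) \Rightarrow \eps = \OO(h)$, but the inverse estimate is a second, independent place where the coupling enters, and without it the Gronwall remainder you write down would not follow.
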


The following theorem guarantees that under appropriate CFL conditions Algorithm~\ref{mpslabel:alg:mps_newton} is well-posed, which is required by Lemma~\ref{mpslabel:le:newton:Linfty} and Theorem~\ref{mpslabel:thm:main:newton}.
\begin{theorem}\label{mpslabel:thm:main:newton:convergence}
Let $T>0$.
Suppose $h, k, \eps > 0$, $\mm_h^0 \in \Mh$ and let $J \in \N$ be the smallest integer such that $T \leq kJ$.\\
{\textrm{(i)}} If $k=o(h^{7/3})$ and $\eps = \mathcal{O}(h^{3/2})$ as $h,k,\eps \to 0$, then there exist thresholds $h_0>0$, $k_0 > 0$, and $\eps_0 > 0$ such that,
for all $h < h_0$, $k < k_0$, and $\eps < \eps_0$, Algorithm~\ref{mpslabel:alg:mps_newton} is well defined, i.e., for all $i = 0, \dots, J-1$ it provides after finitely many iterations of Newton's method~\eqref{mpslabel:eq:mps_eta_newton}--\eqref{mpslabel:eq:stopping_newton} solutions $\mm_{h\eps}^{i + 1}, \rr_{h\eps}^i \in \Vh$ to~\eqref{mpslabel:eq:mps:newton} with $\norm[h]{\rr_{h\eps}^i} \leq \eps$.\\
{\textrm{(ii)}} In particular, there exists a constant $C_\newton > 0$ such that the number of Newton iterations~\eqref{mpslabel:eq:mps_eta_newton}--\eqref{mpslabel:eq:stopping_newton} required to solve~\eqref{mpslabel:eq:mps:newton} is bounded by $\log_2\log_2(C_\newton kh^{-7/2}\eps^{-1})$.
The thresholds $h_0,k_0, \eps_0$ and the constant $C_\newton$ depend only on the mesh parameter $\kappa$ and the problem data.
\end{theorem}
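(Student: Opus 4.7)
The plan is to prove part~(i) by induction on the time-step index $i$, reducing well-posedness at each step to a standard local convergence statement for Newton's method, and to extract part~(ii) from the quadratic error reduction obtained along the way. For the inductive step I assume that $\mm_{h\eps}^n \in \Vh$ is available for $0 \le n \le i$, together with the a~priori bounds on $\norm[\LL^\infty(\Omega)]{\mm_{h\eps}^n}$ and $\E(\mm_{h\eps}^n)$ supplied by Lemma~\ref{mpslabel:le:newton:Linfty} and Theorem~\ref{mpslabel:thm:main:newton}(ii); the choices $k = o(h^{7/3})$ and $\eps = \OO(h^{3/2})$ will be enforced precisely to make those previous results applicable on the same range of parameters.

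First I reformulate the half-step problem~\eqref{mpslabel:eq:mps_eta1_imex} as a finite-dimensional nonlinear equation $F(\eeta) = 0$ on $(\Vh, \inner[h]{\cdot}{\cdot})$, where $F$ collects the terms of~\eqref{mpslabel:eq:mps_eta1_imex} and is quadratic in $\eeta$ (linearity of $\heffloc$ in its argument plus the cross product). A minor variant of the argument behind Proposition~\ref{mpslabel:prop:fp}(ii) shows that, under the stated CFL regime, $F$ admits a unique zero $\eeta_h^{i,*}$ in the ball $B_h$ of radius $\OO(k)$ around $\mm_{h\eps}^i$: indeed, the associated contraction~\eqref{mpslabel:eq:contraction} together with $\eeta_h^{i,0} = \mm_{h\eps}^i$ gives $\norm[h]{\eeta_h^{i,*} - \mm_{h\eps}^i} = \OO(k)$. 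Because $F$ is quadratic, its derivative $F'(\eeta)\uu$ is affine in $\eeta$ and $\uu$, and the Newton linearization~\eqref{mpslabel:eq:mps_eta_newton} is exactly the equation $F'(\eeta_h^{i,\ell})\uu_h^{i,\ell} = -F(\eeta_h^{i,\ell})$. At $\eeta = \eeta_h^{i,*}$ the operator $F'(\eeta)$ differs from the identity by terms of size $\OO(k h^{-2})$, where $h^{-2}$ comes from the inverse estimate applied to $\Ph \heffloc$; hence for $k = o(h^2)$ the linear system~\eqref{mpslabel:eq:mps_eta_newton} is uniquely solvable with $\norm[h]{F'(\eeta)^{-1}} \le 2$ throughout a sufficiently large neighbourhood of $\eeta_h^{i,*}$.

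The core estimate is then standard Newton–Kantorovich: since $F$ is quadratic, $F''$ is a constant bounded trilinear form whose operator norm on $(\Vh, \norm[h]{\cdot})$ scales as $\OO(h^{-2})$ by inverse estimates for $\Ph\heffloc$, and combining this with $\norm[h]{F'^{-1}} \le 2$ gives the reduction
\begin{equation*}
\norm[h]{\eeta_h^{i,\ell+1} - \eeta_h^{i,*}} \le C\, h^{-2}\, \norm[h]{\eeta_h^{i,\ell} - \eeta_h^{i,*}}^2.
\end{equation*}
Starting from $\norm[h]{\eeta_h^{i,0} - \eeta_h^{i,*}} = \OO(k)$, iterating yields $\norm[h]{\eeta_h^{i,\ell} - \eeta_h^{i,*}} \le h^{2}(Ckh^{-2})^{2^\ell}$, which tends to zero provided $C k h^{-2}$ is bounded below one; this is guaranteed in particular by $k = o(h^{7/3}) \subset o(h^{2})$. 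Translating the $h$-norm error on $\uu_h^{i,\ell} = \eeta_h^{i,\ell+1} - \eeta_h^{i,\ell}$ into the residual appearing in the stopping criterion~\eqref{mpslabel:eq:stopping_newton} costs two further factors $h^{-3/2}$ (inverse estimate to pass from $\norm[h]{\cdot}$ to $\norm[\LL^\infty(\Omega)]{\cdot}$) and $h^{-2}$ (for $\Ph\heffloc(\uu)$), yielding an effective residual of size $(Ckh^{-7/2})^{2^\ell}$ after scaling. Requiring this to drop below $\eps$ gives exactly the bound $\ell^* \le \log_2\log_2(C_\newton k h^{-7/2}\eps^{-1})$ of (ii), which is finite precisely under $k = o(h^{7/3})$; finite termination of Newton together with $\mm_{h\eps}^{i+1} := 2\eeta_h^{i,\ell^*+1} - \mm_{h\eps}^i$ closes the inductive step.

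The main obstacle is bookkeeping the scaling exponent $7/3$: three inverse estimates enter simultaneously (one in $F'$, one in $F''$, one in the norm used by the stopping criterion), and they have to be balanced against the size $\OO(k)$ of the initial Newton error so that \emph{a single power of $k h^{-2}$ inside the doubly-exponential base remains summable over at most $J = \OO(k^{-1})$ time-steps} while still leaving room to absorb the $h^{-3/2}$ factor coming from the pointwise stopping criterion. The delicate point is that the constants in Lemma~\ref{mpslabel:le:newton:Linfty} and Theorem~\ref{mpslabel:thm:main:newton} used to close the induction depend themselves on the uniform-in-$i$ bounds $C_\infty$ and $C_{H^1}$, so one must verify that the Newton thresholds $h_0, k_0, \eps_0$ can be chosen once and for all, depending only on $\kappa$, $T$, and the data, and not on the specific index $i < J$.
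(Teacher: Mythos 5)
Your overall scaffold — induction on the time-step, verify the hypotheses of a local Newton theorem, read off the iteration count from quadratic contraction — matches the paper's. But there is a genuine gap at the crucial step that fixes the exponent $7/3$: the distance from the initial guess $\eeta_h^{i,0} = \mm_{h\eps}^i$ to the exact half-step solution $\eeta_h^{i,*}$.

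You assert that ``a minor variant of the argument behind Proposition~\ref{mpslabel:prop:fp}(ii)'' gives $\norm[h]{\eeta_h^{i,*} - \mm_{h\eps}^i} = \OO(k)$. This is not what that argument produces. Feeding $\eeta^{i,0}=\mm_{h\eps}^i$ into the first fixed-point step~\eqref{mpslabel:eq:mps_eta_fp} and testing with $\pphi_h = \eeta^{i,1} - \mm_{h\eps}^i$ (the damping term drops by mass lumping) yields
\begin{equation*}
\norm[h]{\eeta^{i,1} - \mm_{h\eps}^i}
\;\lesssim\; k\,\norm[h]{\Ph\bigl(\heffloc(\mm_{h\eps}^i) + \PPi_h(\mm_{h\eps}^i,\mm_{h\eps}^{i-1})\bigr)}
\;\lesssim\; k h^{-1},
\end{equation*}
because the $\Ph$-representation of the local effective field costs one inverse estimate via~\eqref{mpslabel:eq:boundednessPh}. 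Combining with the contraction~\eqref{mpslabel:eq:contraction} gives $\norm[h]{\eeta_h^{i,*} - \mm_{h\eps}^i} = \OO(kh^{-1})$, not $\OO(k)$. The paper's proof (Section~\ref{mpslabel:sec:newton:proof:x0}) does \emph{not} use the fixed-point route at all: it tests~\eqref{mpslabel:eq:mps_eta1_imex} with the LLG-type test function $\alpha(\widehat{\xx^\ast} - \mm_{h\eps}^i) - \frac{k}{2}\Ph\bigl(\heffloc(\widehat{\xx^\ast}) + \PPi_h\bigr)$ and uses the energy bound $\E(\mm_{h\eps}^i) \le C$ from the induction hypothesis to obtain $\norm[h]{\widehat{\xx^\ast} - \mm_{h\eps}^i}^2 \lesssim k$, i.e., $\OO(k^{1/2})$, with \emph{no} $h^{-1}$. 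This $k^{1/2}$ is exactly what, combined with $CL \simeq kh^{-2}$ in the $\ell^2$-norm (or the equivalent balance in the $h$-norm), produces the threshold $k^{3/2}h^{-7/2} < 1$, i.e., $k = o(h^{7/3})$.

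A second inaccuracy compounds this: you claim the trilinear form $F''$ has $\norm[h]{\cdot}$-operator norm $\OO(h^{-2})$. Because the bilinear term $\inner[h]{\widehat{\uu}\times\Ph\heffloc(\widehat{\xx})}{\widehat{\vv}}$ forces an $\LL^\infty$-$\LL^2$ inverse estimate on one of the factors (the pointwise cross product cannot be estimated purely in $\norm[h]{\cdot}$), the correct scaling is $\OO(kh^{-7/2})$ — the $k$-factor you drop and an extra $h^{-3/2}$ you miss. Your two miscounts partially cancel, so your version of the argument concludes that $k = o(h^2)$ suffices; since you cannot actually justify the $\OO(k)$ bound, the argument does not close, and it also does not explain where $7/3$ comes from or why the theorem statement (and its proof) requires it. You should replace the fixed-point reference by the energy-test argument, and carefully track the $\LL^\infty$ inverse estimate in the Lipschitz and residual bounds; once both are corrected, the balance $CLd_0 \simeq k^{3/2}h^{-7/2}$ and the iteration-count bound $\log_2\log_2(C_\newton k h^{-7/2}\eps^{-1})$ fall out exactly as in the paper.
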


\subsubsection{Coupling conditions on practical midpoint schemes}
While the ideal midpoint scheme (Algorithm~\ref{mpslabel:alg:mps}) is unconditionally convergent towards a weak solution of LLG, the analysis of the practical midpoint schemes (Algorithm~\ref{mpslabel:alg:mps_fp} and Algorithm~\ref{mpslabel:alg:mps_newton}) crucially relies on CFL conditions imposed on the discretization parameters $h, k, \eps > 0$.
We conclude this section by Table~\ref{mpslabel:table:cfl_overview}, giving an overview on the imposed coupling conditions sufficient to establish a rigorous analysis of the practical midpoint schemes.
\begin{table}[h]
{\small
\begin{tabular}{|c|c|c|}
\hline
 & Fixed-point linearization & Newton linearization\\
 & Algorithm~\ref{mpslabel:alg:mps_fp} & Algorithm~\ref{mpslabel:alg:mps_newton}\\
\hline
well-posedness & $k=o(h^2)$ & $k = o(h^{7/3})$, $\eps = \mathcal{O}(h^{3/2})$\\
$\LL^\infty(\Omega)$-bound & none & $\eps = \mathcal{O}(h^{3/2})$\\
stability & $\eps = \mathcal{O}(h)$ & $\eps = \mathcal{O}(h^{3/2})$\\
convergence & $\eps = \mathcal{O}(h)$ & $\eps = \mathcal{O}(h^{3/2})$\\
\hline
total & $k = o(h^2)$, $\eps = \mathcal{O}(h)$ & $k = o(h^{7/3})$, $\eps = \mathcal{O}(h^{3/2})$\\
\hline
\end{tabular}
}
\caption{Sufficient CFL conditions for the analysis of the practical midpoint schemes of Section~\ref{mpslabel:sec:practical}.}
\label{mpslabel:table:cfl_overview}
\end{table}

\section{Numerical experiments} \label{mpslabel:sec:numerics}
The goal of this section is threefold:
First, in Section~\ref{mpslabel:sec:fert} we verify the extension of the midpoint scheme to the DMI contribution and its correct implementation by simulating an experiment on skyrmion dynamics from~\cite{scrtf2013}.
The simulation results with the midpoint scheme are compared to theirs and to simulations with the tangent plane scheme from~\cite{hpprss2019}.
In Section~\ref{mpslabel:sec:critical_D}, we introduce a variation of the experiment from~\cite{scrtf2013} in order to compare reliability of the midpoint scheme to the generally cheaper tangent plane schemes in simulating sensitive skyrmion dynamics susceptible to slight (artificial) disturbances.
By doing this, we emphasize the advantages of discrete energy conservation realized by the midpoint scheme.
Finally, in an academic setting the CFL conditions arising from our analysis sufficient to prove well-posedness of the practical midpoint schemes are experimentally verified in Section~\ref{mpslabel:sec:numerics_cfl}.
In particular, the numerical CFL study hints that the CFL condition $k = o(h^{7/3})$ derived for the practical midpoint scheme based on the Newton iteration is likely pessimistic and might be weakened to $k = o(h^2)$ with a sharper analysis.
Moreover, we compare the number of iterations in the nonlinear solvers of the two practical midpoint schemes, as well as the impact of the solver accuracy $\eps > 0$ on the deviation from discrete unit-length.
All experiments in this section were performed with Commics~\cite{commics,prsehhsmp2020}.

\subsection{Stability of isolated skyrmions in nanodisks} \label{mpslabel:sec:fert}
To validate the extension of the midpoint scheme incorporating the DMI contribution, we reproduce a numerical experiment from~\cite{scrtf2013} for both the practical midpoint scheme based on the constraint preserving fixed-point iteration (Algorithm~\ref{mpslabel:alg:mps_fp}) and the practical midpoint scheme based on Newton's method (Algorithm~\ref{mpslabel:alg:mps_newton}).
There, the relaxed states of a thin nanodisk of diameter \SI{80}{\nano\meter} (aligned with $x_1 x_2$-plane) and thickness \SI{0.4}{\nano\meter} ($x_3$-direction) centered at $(0,0,0)$ for different values of the DMI constant are investigated.
The effective field consists of exchange interaction, perpendicular uniaxial anisotropy, interfacial DMI, and stray field, i.e.,
\begin{equation*}
\Heff(\mm)
= \frac{2A}{\mu_0 \Ms} \Lapl\mm
+ \frac{2 K}{\mu_0 \Ms} (\axis\cdot\mm)\axis
- \frac{2D}{\mu_0 \Ms}
\begin{pmatrix}
- \de_1 m_3 \\
- \de_2 m_3 \\
\de_1 m_1 + \de_2 m_2
\end{pmatrix}
+ \Hs(\mm)\,.
\end{equation*}
The involved material parameters mimic those of cobalt:
$\Ms=$ \SI{5.8e5}{\ampere\per\meter}, $\alpha=$ \num{0.3}, $A=$ \SI{1.5e-11}{\joule\per\meter}, $K=$ \SI{8e5}{\joule\per\meter\cubed}, and $\axis=(0,0,1)$.
For the DMI constant, the range $D=$ \num{0}, \num{1}, \dots, \num{8} \si{\milli\joule\per\square\meter} is considered.
The initial condition is a skyrmion-like state, i.e., given $r = \sqrt{x_1^2 + x_2^2}$, we define $\mm^0(\xx)=(0,0,-1)$ if $r \in [0,15]$ \si{\nano\meter} and $\mm^0(\xx)=(0,0,1)$ if $r \in (15,40]$ \si{\nano\meter}.
For all simulations we choose $T=$ \SI{1}{\nano\second}, which experimentally turns out to be a sufficiently large time to relax the system.
The computational domain is discretized by a regular partition generated by Netgen~\cite{ngsolve} consisting of \num{34596} tetrahedra and \num{11797} vertices, which corresponds to a prescribed mesh size of \SI{1}{\nano\meter}.
For the time discretization, we consider a uniform partition of the time interval $(0,T)$ with a time-step size of \SI{2.5}{\femto\second}.
We note that the time-step size has to be chosen considerably smaller than, e.g., for (different variants of) the tangent plane scheme; see our previous work~\cite[Section~4.3]{hpprss2019}.
This is due to the more restrictive CFL conditions required for convergence of the nonlinear solvers in the practical midpoint schemes; see Theorem~\ref{mpslabel:thm:main:fp} and Theorem~\ref{mpslabel:thm:main:newton:convergence}.
The accuracy for the nonlinear solver is chosen as $\eps = 10^{-8}$.
\par
The stable state is a quasi-uniform ferromagnetic state for the values $D=$ \num{0}, \num{1}, \num{2} \si{\milli\joule\per\square\meter}, a skyrmion for the values $D=$ \num{3}, \dots, \num{6} \si{\milli\joule\per\square\meter}, and a multidomain state (target skyrmion) for the values $D=$ \num{7}, \num{8} \si{\milli\joule\per\square\meter}; see Figure~\ref{mpslabel:fig:relaxed:skyrmion-like}.
The skyrmion size, i.e., the diameter of the circle $\{ m_3 = 0 \}$ in the $x_1 x_2$-plane, increases from the minimum value of circa \SI{14}{\nano\meter} for $D=$ \SI{3}{\milli\joule\per\square\meter} to the maximum value of circa \SI{48}{\nano\meter} for $D=$ \SI{6}{\milli\joule\per\square\meter}.
\begin{figure}[h]
\captionsetup[subfigure]{labelformat=empty}
\centering
\begin{minipage}{0.575\textwidth}
\hfill
\begin{subfigure}{\textwidth}
\begin{tikzpicture}
\pgfplotstableread{plots/fert/energy_at_1ns.dat}{\data}
\pgfplotstableread{plots/fert/tps2_data.dat}{\tpsdata}
\begin{axis}[
width = 80mm,
height = 80mm,
xlabel={\footnotesize $D$ [\si{\milli\joule\per\square\meter}]},
ylabel={\footnotesize total energy after relaxation [$10^{-18}\si{\joule}$]},
xmin=-0.5,
xmax=8.5,
xtick={0, 1, 2, 3, 4, 5, 6, 7, 8},
legend style={legend pos=south west, legend cell align = left},
]
\addplot[purple, ultra thick, mark=x, mark size=5] table[x=D, y=energy]{\data};
\addplot[teal, ultra thick, only marks, mark=o, mark size=5] table[x=D, y=energy]{\tpsdata};
\legend{\footnotesize practical MPS,\footnotesize TPS2 from \cite{hpprss2019}}
\end{axis}
\end{tikzpicture}
\end{subfigure}
\end{minipage}%
\hfill
\begin{minipage}{0.4\textwidth}
\begin{subfigure}[b]{0.275\textwidth}
\includegraphics[width=\textwidth]{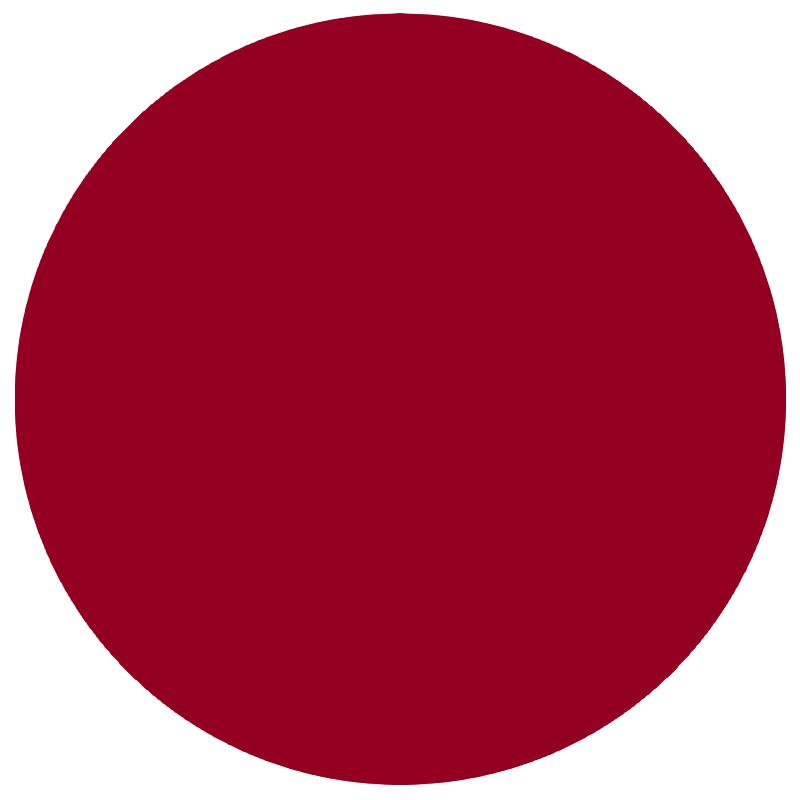}
\caption{\footnotesize $D=0$}
\end{subfigure}
\hfill
\begin{subfigure}[b]{0.275\textwidth}
\includegraphics[width=\textwidth]{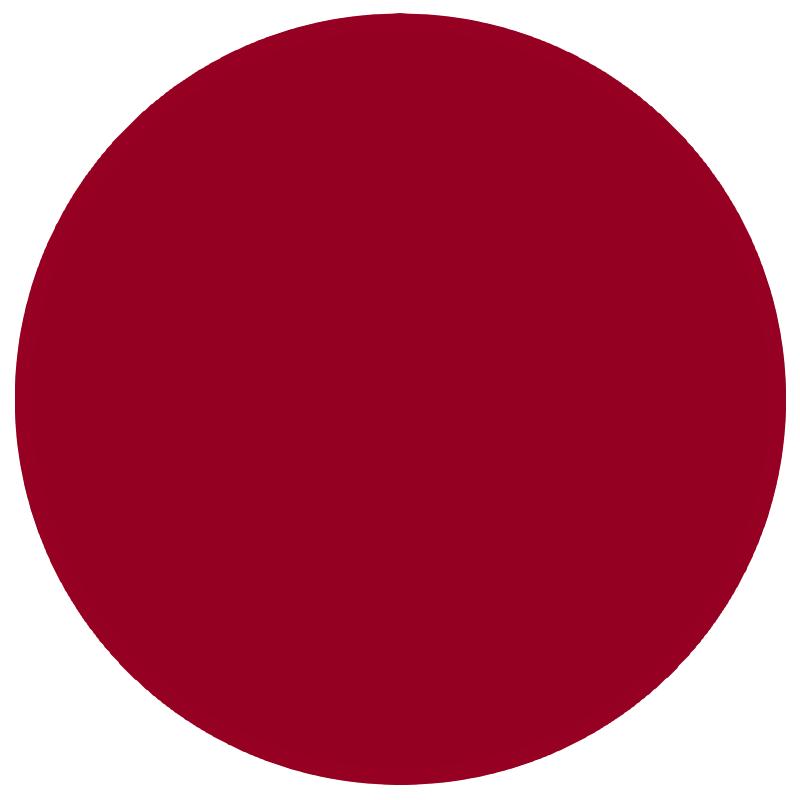}
\caption{\footnotesize $D=1$}
\end{subfigure}
\hfill
\begin{subfigure}[b]{0.275\textwidth}
\includegraphics[width=\textwidth]{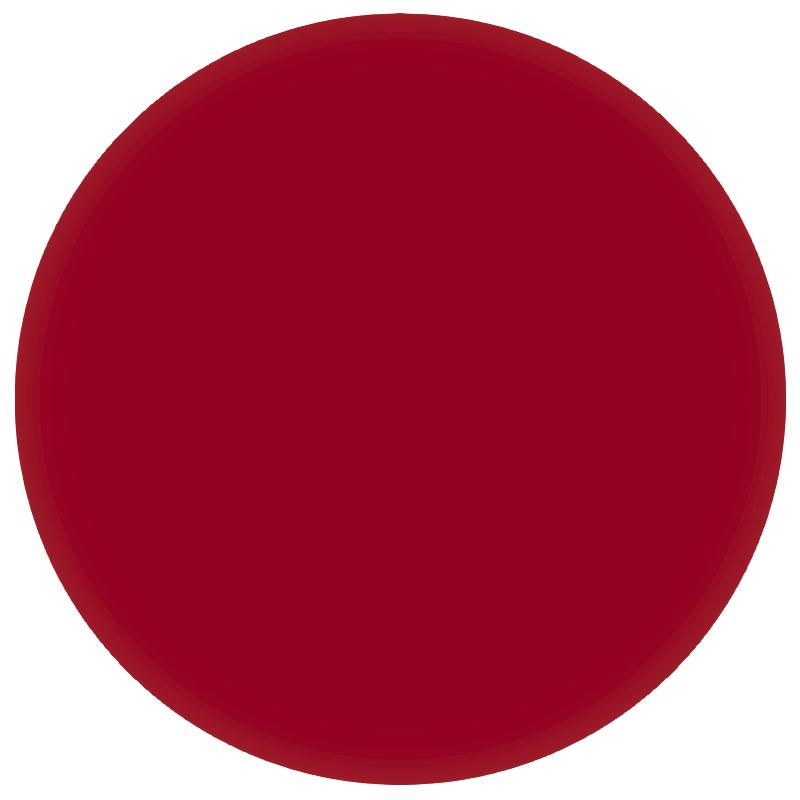}
\caption{\footnotesize $D=2$}
\end{subfigure}
\hfill
\newline
\hfill
\begin{subfigure}[b]{0.275\textwidth}
\includegraphics[width=\textwidth]{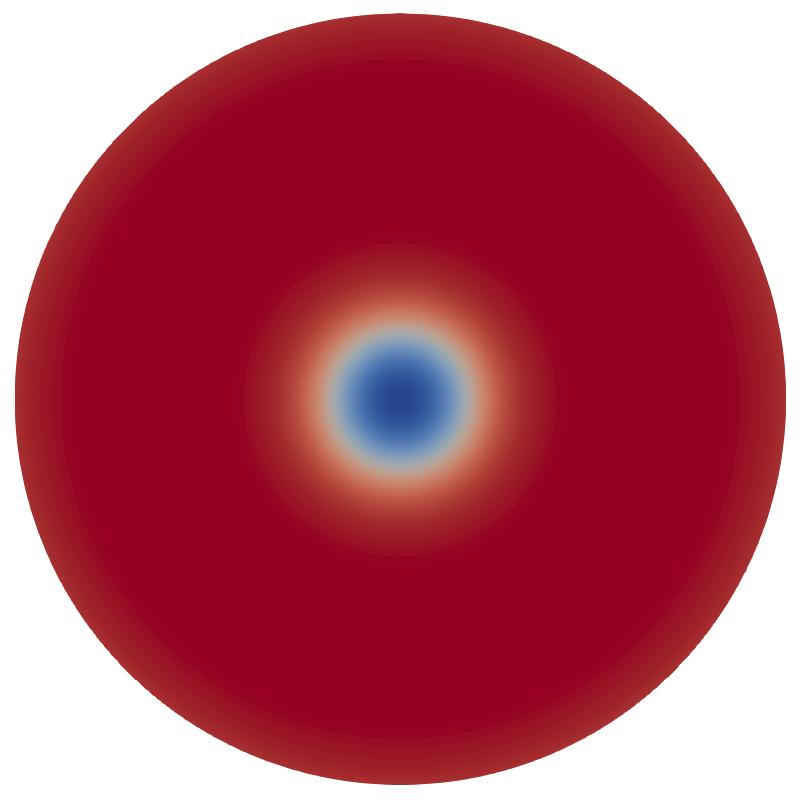}
\caption{\footnotesize $D=3$}
\end{subfigure}
\hfill
\begin{subfigure}[b]{0.275\textwidth}
\includegraphics[width=\textwidth]{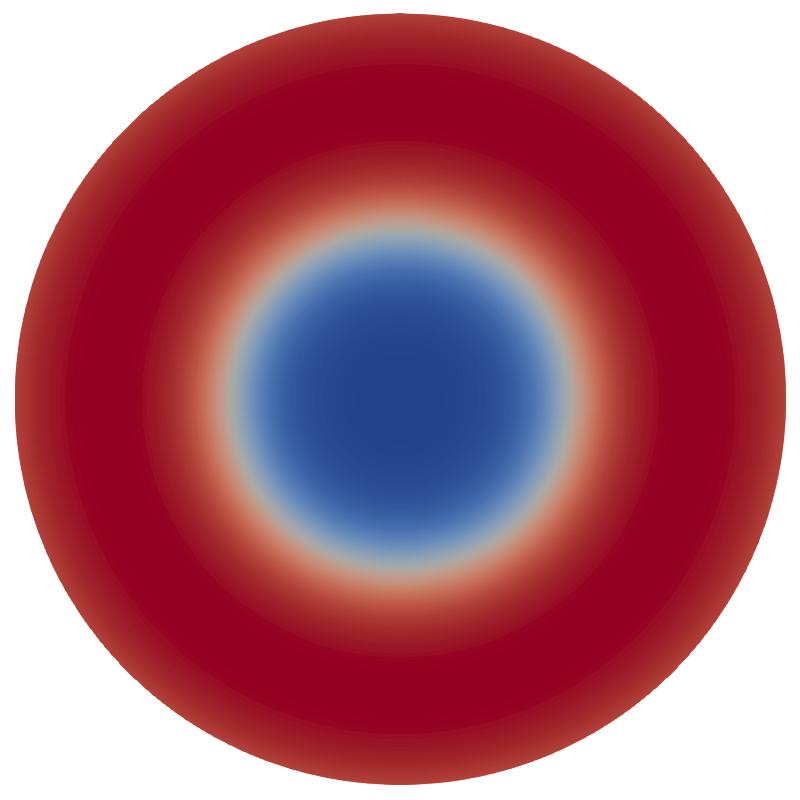}
\caption{\footnotesize $D=4$}
\end{subfigure}
\hfill
\begin{subfigure}[b]{0.275\textwidth}
\includegraphics[width=\textwidth]{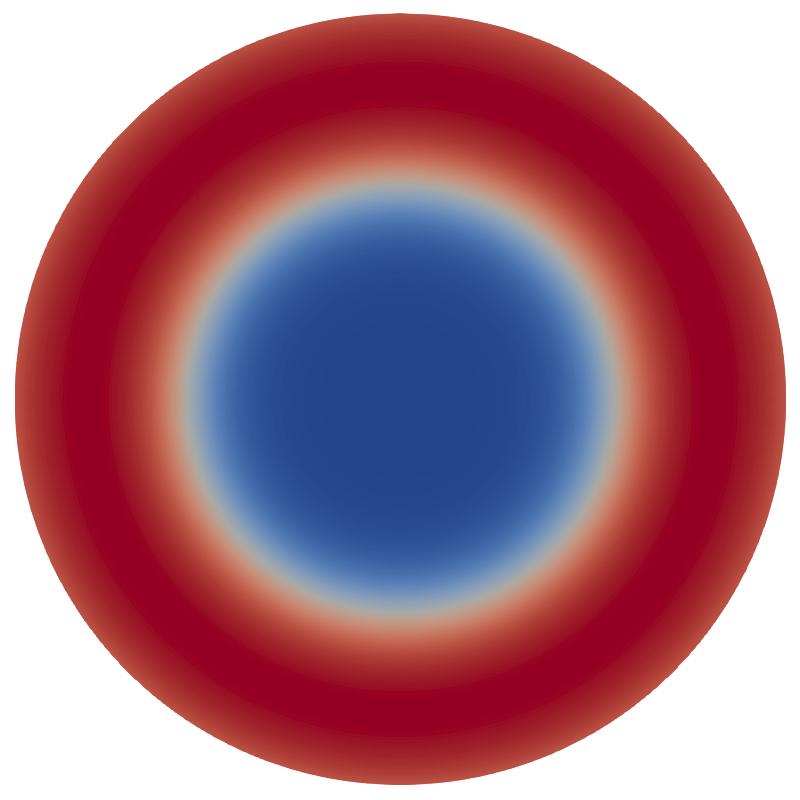}
\caption{\footnotesize $D=5$}
\end{subfigure}
\hfill
\newline
\hfill
\begin{subfigure}[b]{0.275\textwidth}
\includegraphics[width=\textwidth]{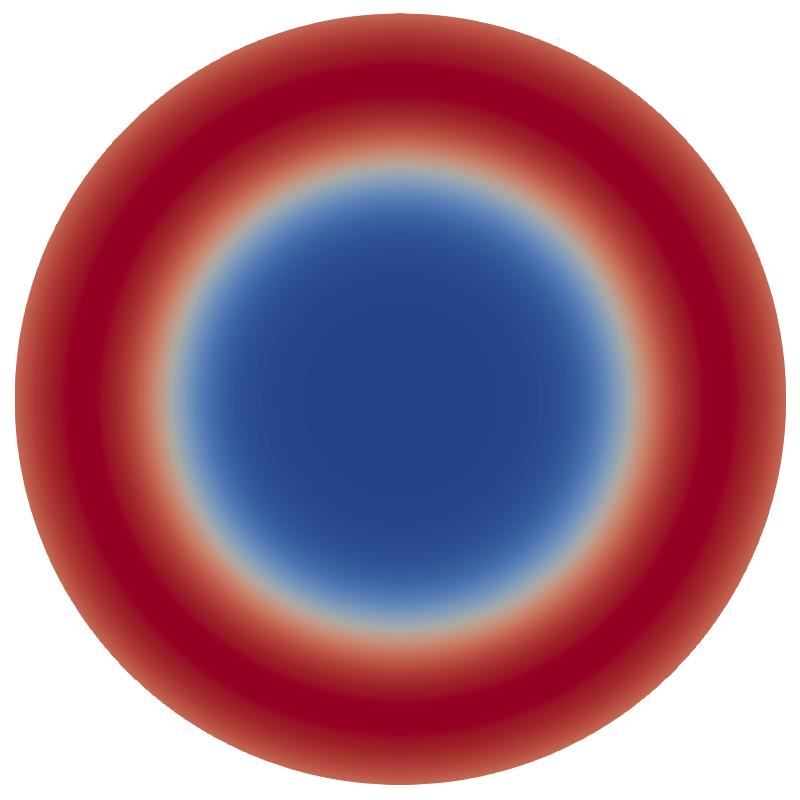}
\caption{\footnotesize $D=6$}
\end{subfigure}
\hfill
\begin{subfigure}[b]{0.275\textwidth}
\includegraphics[width=\textwidth]{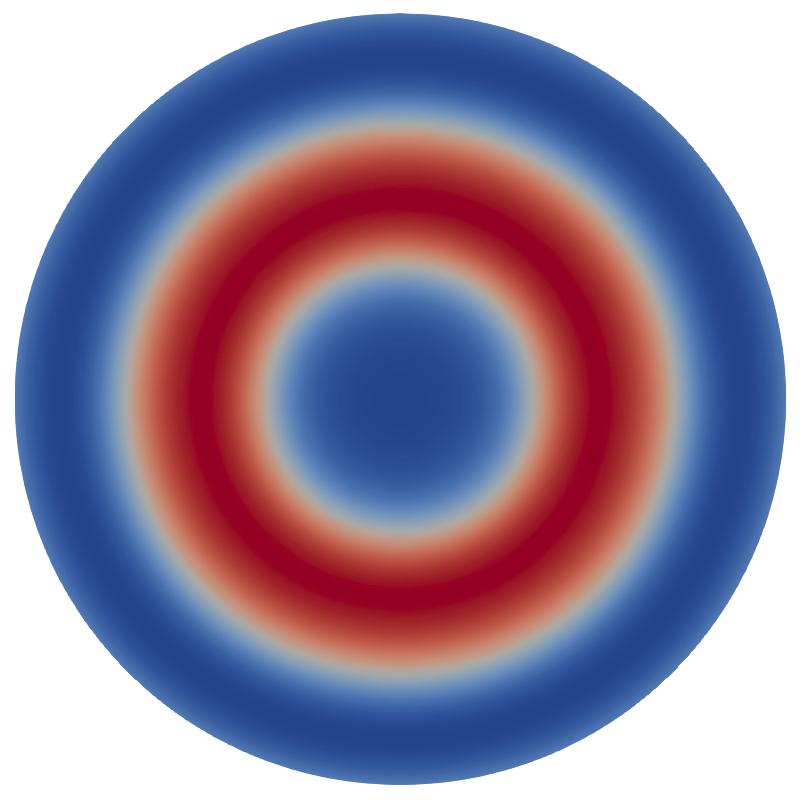}
\caption{\footnotesize $D=7$}
\end{subfigure}
\hfill
\begin{subfigure}[b]{0.275\textwidth}
\includegraphics[width=\textwidth]{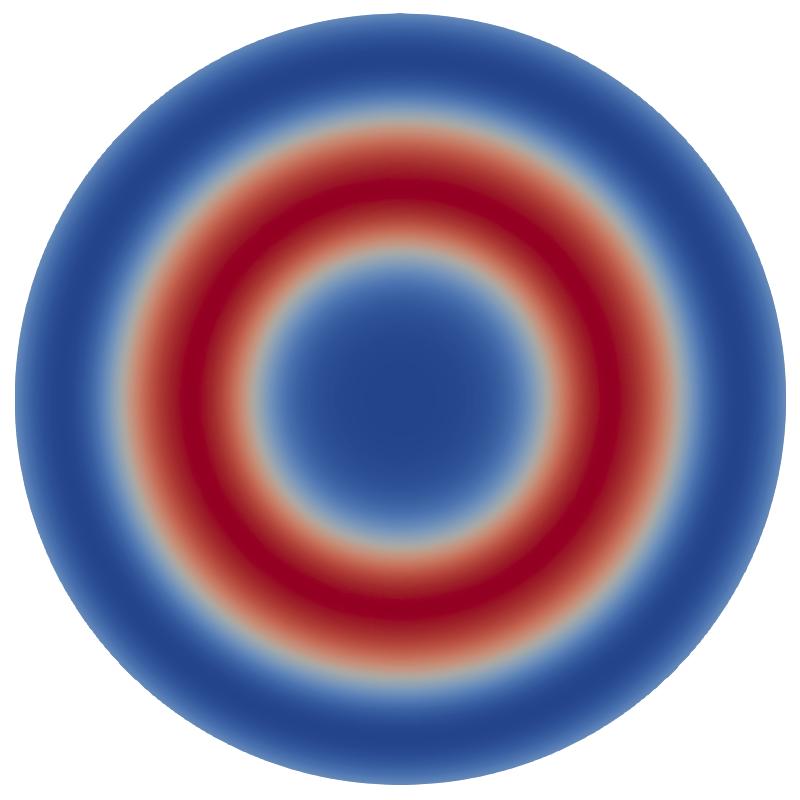}
\caption{\footnotesize $D=8$}
\end{subfigure}
\end{minipage}%
\hfill
\caption[Experiment of Section~\ref{mpslabel:sec:fert}:
Final energies and 2D views of the relaxed state for different DMI values]{Experiment of Section~\ref{mpslabel:sec:fert}.
Left: Final energy for different values of the DMI constant.
Right: Magnetization component $\mm_3$ ranging from $-1$ (blue) to $+1$ (red) of the relaxed state for different values of the DMI constant (in \si{\milli\joule\per\square\meter}).
The results computed with either of the practical midpoint schemes, Algorithm~\ref{mpslabel:alg:mps_fp} or Algorithm~\ref{mpslabel:alg:mps_newton}, coincide.}
\label{mpslabel:fig:relaxed:skyrmion-like}
\end{figure}
\par
In Figure~\ref{mpslabel:fig:relaxed:skyrmion-like}, the relaxed states computed with the practical midpoint scheme for different values of the DMI constant are given.
The energy values and the magnetization profiles are in perfect quantitative and qualitative agreement with those reported in~\cite[Figure~1]{scrtf2013} and~\cite[Section~4.2]{hpprss2019}.
This validates both the extension of the midpoint scheme to DMI energy contributions and its implementation in Commics~\cite{commics,prsehhsmp2020}.

\subsection{Reliable schemes for energy sensitive dynamics}\label{mpslabel:sec:critical_D}
We recall the discrete energy equality~\eqref{mpslabel:eq:stability} achieved by the ideal midpoint scheme
\begin{equation*} 
\E(\mm_h^J) + \alpha k \sum_{i=0}^{J-1} \norm[h]{d_t \mm_h^i}^2
= \E(\mm_h^0)\,.
\end{equation*}
Differently, for the first-order tangent plane scheme~\cite{alouges2008a} we recite from \cite[Proposition~2]{hpprss2019} the discrete energy inequality
\begin{equation} \label{mpslabel:eq:tps1:energy}
\E(\mm_h^J) + (\alpha - Ck/h) k \sum_{i=0}^{J-1} \norm[h]{\vv_h^i}^2 + \lex^2(\theta-1/2)k^2\sum_{i=0}^{J-1} \norm[\LL^2(\Omega)]{\Grad \vv_h^i}^2
\le \E(\mm_h^0)\,,
\end{equation}
where $\vv_h^i$ denotes the discrete time derivative computed in the $i$-th time-step of the tangent plane scheme to define the update $\mm_h^{i+1}(\zz) = (\mm_h^i(\zz) + k\vv_h^i(\zz)) / |\mm_h^i(\zz) + k\vv_h^i(\zz)| \in \sphere$ for all $\zz \in \Nh$.
We note that the generic constant $C > 0$ in~\eqref{mpslabel:eq:tps1:energy} stems from an inverse estimate used in the analysis of~\cite{hpprss2019} to control the discrete energy in presence of a DMI energy contribution.
The third term on the left-hand side in~\eqref{mpslabel:eq:tps1:energy} corresponds to artificial damping introduced by implicit treatment in time of the Laplacian for $1/2 < \theta \le 1$, while the inequality (instead of equality) is a result of the nodal projection in each time-step. 
As a third integrator we consider the (almost) second-order tangent plane scheme from~\cite{akst2014}, which provides a discrete energy inequality, which, although not identical to~\eqref{mpslabel:eq:tps1:energy}, introduces similar artificial energy dissipation due to implicit treatment of the Laplacian and the nodal projection update.
For the second-order schemes, i.e., for the midpoint scheme and the second-order tangent plane scheme, an IMEX treatment of the lower-order terms is employed, which results in a perturbation of order $\mathcal{O}(k^2)$ of the respective discrete energy identity~\cite{dpprs2017}.
While the discrete energy identity for the midpoint scheme mimics the continuous law
\begin{align*}
\E(\mm(\tau)) + \alpha\int_0^\tau \norm[\LL^2(\Omega)]{\partial_t \mm(t)}^2 \dt 
= \E(\mm^0) \,,
\end{align*}
due to the severe CFL condition $k = o(h^2)$ the practical midpoint schemes are very restrictive on the time-step size.
In contrast, the tangent plane integrators allow for considerably larger time-step sizes, but introduce artificial damping to the system.
Hence, we expect decreased reliability of the tangent plane integrators for accurately simulating processes, which are particularly sensitive to slight inaccuracies in the discrete energy evolution.

To quantify the effects of this artificial damping introduced by the tangent plane integrators, we extend the experiment of Section~\ref{mpslabel:sec:fert}:
Considering the different relaxed states in Figure~\ref{mpslabel:fig:relaxed:skyrmion-like}(right), one infers that between $D = \SI{2}{\milli\joule\per\square\metre}$ and $D = \SI{3}{\milli\joule\per\square\metre}$ there is a (qualitative) discontinuity, corresponding to a jump in Figure~\ref{mpslabel:fig:relaxed:skyrmion-like}(left) if the resolution on the $D$-axis was increased.
Analogously, this applies to the interval from $D = \SI{6}{\milli\joule\per\square\metre}$ to $D = \SI{7}{\milli\joule\per\square\metre}$.
The goal of this experiment is the determination of the points of transition $D_{crit}^{2-3}$ and $D_{crit}^{6-7}$ from the quasi-uniform relaxed state to the skyrmion state between $D = \SI{2}{\milli\joule\per\square\metre}$ and $D = \SI{3}{\milli\joule\per\square\metre}$, as well as from the skyrmion state to the target skyrmion state between $D = \SI{6}{\milli\joule\per\square\metre}$ and $D = \SI{7}{\milli\joule\per\square\metre}$, respectively.
We will evaluate and compare the reliability of the midpoint scheme (MPS), the first-, and the second-order tangent plane scheme (TPS1 and TPS2) in determining $D_{crit}^{2-3}$ and $D_{crit}^{6-7}$.
For all three schemes, dynamics are simulated with identical parameters:

We consider the fixed mesh from Section~\ref{mpslabel:sec:fert}.
Although this mesh does not satisfy the so-called \emph{angle condition} ensuring validity of~\eqref{mpslabel:eq:tps1:energy}, stability of the tangent plane integrators is still recovered for the smaller time-step sizes meeting $k=o(h^2)$ in this experiment; see~\cite[(15) and Remark~3(iv)]{hpprss2019}.
To narrow down the critical values $D_{crit}^{2-3}$ and $D_{crit}^{6-7}$, we simulate the relaxation dynamics for different values of the DMI constant $D$ corresponding to a resolution of $\SI{0.0025}{\milli\joule\per\square\metre}$ as seen in Figure~\ref{mpslabel:fig:critical_D}.
For each of the integrators and all considered DMI constants $D$, we relax the initial state using time-step sizes $k = \SI{1/100}{\pico\second}, \SI{1/200}{\pico\second}, \SI{1/400}{\pico\second}, \SI{1/800}{\pico\second}, \SI{1/1600}{\pico\second}$, where the two largest time-step sizes are omitted for the midpoint scheme because experimentally they do not fulfill the CFL constraint $k = o(h^2)$, i.e., neither of the nonlinear solvers converges for $k = \SI{1/100}{\pico\second}, \SI{1/200}{\pico\second}$.
We expect the simulations to be more and more accurate as the time-step size $k > 0$ decreases.
The accuracy for the nonlinear solver is chosen as $\eps = 10^{-8}$.

The results of this experiment displayed in Figure~\ref{mpslabel:fig:critical_D} show a sharp transition $D_{crit}^{2-3}$ between the uniform state and the skyrmion state.
There is no sharp transition from the skyrmion state to the target skyrmion state, as the experiment reveals a small interval of DMI parameters $D$ for which relaxation leads to states we call \emph{broken (symmetry) states} --- neither a skyrmion nor a target skyrmion; see Figure~\ref{mpslabel:fig:energy_Dcrit} for a compilation of simulation details on this interval of broken states.
While for the tangent plane integrators the determined transition value $D_{crit}^{2-3}$ and the transition interval of broken states clearly show a dependence on the used time discretization $k > 0$, the results for the midpoint scheme are robust and, in particular, are identical for all investigated time-step sizes.
We draw the conclusion that the varying transition thresholds obtained for decreasing time-step size $k > 0$ by simulations with either of the tangent plane integrators are a consequence of the artificial energy dissipation quantified in~\eqref{mpslabel:eq:tps1:energy}.

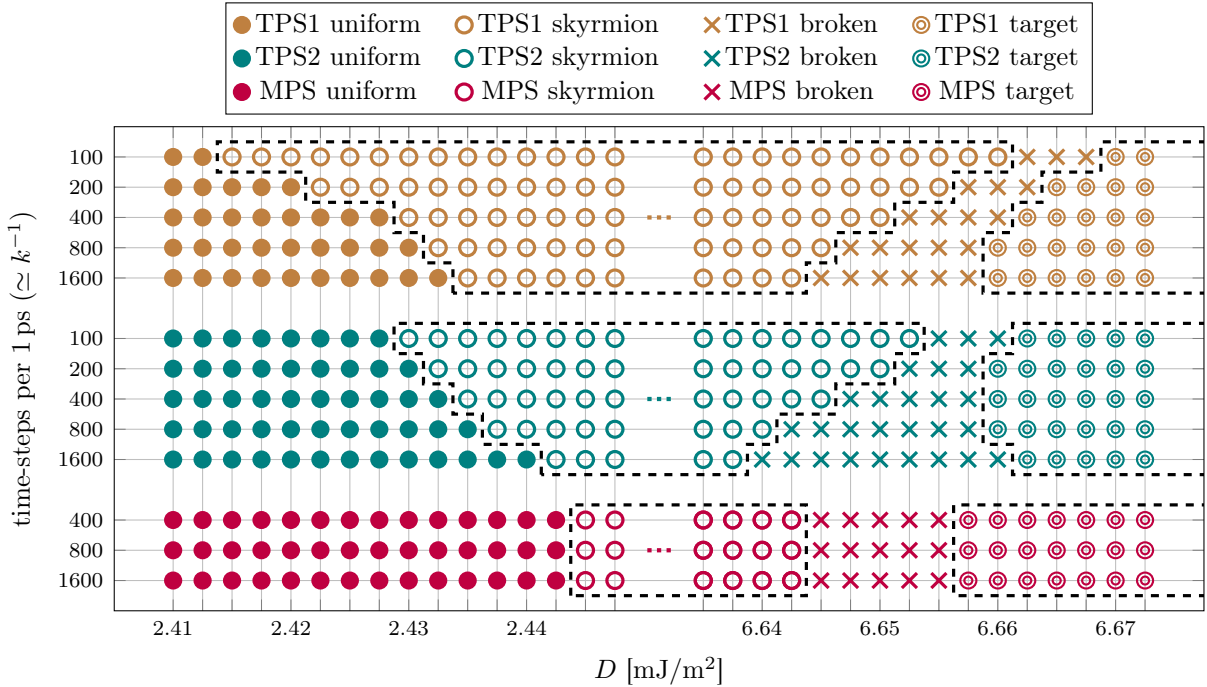
\begin{figure}[h]
\pgfdeclareplotmark{Oo}{%
  \pgfpathcircle{\pgfpointorigin}{\pgfplotmarksize}%
  \pgfpathcircle{\pgfpointorigin}{0.5\pgfplotmarksize}%
  \pgfusepathqstroke
}
\begin{tikzpicture}
\pgfplotstableread{plots/interval_tps1_all_uniform.dat}{\tpsUniform}
\pgfplotstableread{plots/first_interval_tps1_all_skyrmion.dat}{\tpsSkyrmion}
\pgfplotstableread{plots/interval_tps1_all_skyrmion.dat}{\tpsLargeSkyrmion}
\pgfplotstableread{plots/interval_tps1_all_broken.dat}{\tpsBroken}
\pgfplotstableread{plots/interval_tps1_all_target.dat}{\tpsTarget}
\pgfplotstableread{plots/interval_tps2_all_uniform.dat}{\ttpsUniform}
\pgfplotstableread{plots/first_interval_tps2_all_skyrmion.dat}{\ttpsSkyrmion}
\pgfplotstableread{plots/interval_tps2_all_skyrmion.dat}{\ttpsLargeSkyrmion}
\pgfplotstableread{plots/interval_tps2_all_broken.dat}{\ttpsBroken}
\pgfplotstableread{plots/interval_tps2_all_target.dat}{\ttpsTarget}
\pgfplotstableread{plots/interval_mps_all_uniform.dat}{\mpsUniform}
\pgfplotstableread{plots/first_interval_mps_all_skyrmion.dat}{\mpsSkyrmion}
\pgfplotstableread{plots/interval_mps_all_skyrmion.dat}{\mpsLargeSkyrmion}
\pgfplotstableread{plots/interval_mps_all_broken.dat}{\mpsBroken}
\pgfplotstableread{plots/interval_mps_all_target.dat}{\mpsTarget}
\begin{axis}[
width = \textwidth, 
height = 80mm,
xlabel={\footnotesize $D$ [\si{\milli\joule\per\square\meter}]},
ylabel={\footnotesize time-steps per \SI{1}{\pico\second} ($\simeq k^{-1}$)},
xmin=2.405,
xmax=2.4975,
ymin=0,
ymax=16,
ytick={1, 2, 3, 5, 6, 7, 8, 9, 11, 12, 13, 14, 15},
yticklabels={
$1600$,
$800$,
$400$,
$1600$,
$800$,
$400$,
$200$,
$100$,
$1600$,
$800$,
$400$,
$200$,
$100$,
},
grid=major,
xtick={
2.4100, 2.4125, 2.4150, 2.4175,
2.4200, 2.4225, 2.4250, 2.4275,
2.4300, 2.4325, 2.4350, 2.4375,
2.4400, 2.4425, 2.4450, 2.4475,
2.4550, 2.4575,
2.4600, 2.4625, 2.4650, 2.4675,
2.4700, 2.4725, 2.4750, 2.4775,
2.4800, 2.4825, 2.4850, 2.4875,
2.4900, 2.4925
}, 
xticklabels={2.41,,,, 2.42,,,, 2.43,,,, 2.44,,,, ,,6.64,,,, 6.65,,,, 6.66,,,, 6.67,},
legend columns=4,
legend style={/tikz/column 4/.style={column sep=15pt}},
legend style={at={(0.5,1.025)},anchor=south}
]
\addplot[mark options={solid}, brown, only marks, mark=*, mark size=3, thick] table[x=D, y expr=\thisrowno{1}+10]{\tpsUniform};
\addplot[mark options={solid}, brown, only marks, mark=o, mark size=3, very thick] table[x=D, y expr=\thisrowno{1}+10]{\tpsSkyrmion};
\addplot[mark options={solid}, brown, only marks, mark=x, mark size=4, very thick] table[x expr=\thisrowno{0}-4.18, y expr=\thisrowno{1}+10]{\tpsBroken};
\addplot[mark options={solid}, brown, only marks, mark=Oo, mark size=3, thick] table[x expr=\thisrowno{0}-4.18, y expr=\thisrowno{1}+10]{\tpsTarget};
\addplot[mark options={solid}, teal, only marks, mark=*, mark size=3, thick] table[x=D, y expr=\thisrowno{1}+4]{\ttpsUniform};
\addplot[mark options={solid}, teal, only marks, mark=o, mark size=3, very thick] table[x=D, y expr=\thisrowno{1}+4]{\ttpsSkyrmion};
\addplot[mark options={solid}, teal, only marks, mark=x, mark size=4, very thick] table[x expr=\thisrowno{0}-4.18, y expr=\thisrowno{1}+4]{\ttpsBroken};
\addplot[mark options={solid}, teal, only marks, mark=Oo, mark size=3, thick] table[x expr=\thisrowno{0}-4.18, y expr=\thisrowno{1}+4]{\ttpsTarget};
\addplot[mark options={solid}, purple, only marks, mark=*, mark size=3, thick] table[x=D, y=nr]{\mpsUniform};
\addplot[mark options={solid}, purple, only marks, mark=o, mark size=3, very thick] table[x=D, y=nr]{\mpsSkyrmion};
\addplot[mark options={solid}, purple, only marks, mark=x, mark size=4, very thick] table[x expr=\thisrowno{0}-4.18, y expr=\thisrowno{1}]{\mpsBroken};
\addplot[mark options={solid}, purple, only marks, mark=Oo, mark size=3, thick] table[x expr=\thisrowno{0}-4.18, y expr=\thisrowno{1}]{\mpsTarget};
\addplot[mark options={solid}, brown, only marks, mark=o, mark size=3, very thick] table[x expr=\thisrowno{0}-4.18, y expr=\thisrowno{1}+10]{\tpsLargeSkyrmion};
\addplot[mark options={solid}, teal, only marks, mark=o, mark size=3, very thick] table[x expr=\thisrowno{0}-4.18, y expr=\thisrowno{1}+4]{\ttpsLargeSkyrmion};
\addplot[mark options={solid}, purple, only marks, mark=o, mark size=3, very thick] table[x expr=\thisrowno{0}-4.18, y expr=\thisrowno{1}]{\mpsLargeSkyrmion};
\addplot[mark options={solid}, purple, only marks, mark=o, mark size=3, very thick] table[x expr=\thisrowno{0}-4.18, y expr=\thisrowno{1}]{\mpsLargeSkyrmion};
\addplot [mark=none, ultra thick, dotted, brown] coordinates {(2.45025, 13) (2.45225, 13)};
\addplot [mark=none, ultra thick, dotted, teal] coordinates {(2.45025, 7) (2.45225, 7)};
\addplot [mark=none, ultra thick, dotted, purple] coordinates {(2.45025, 2) (2.45225, 2)};

\addplot [mark=none, very thick, dashed, black] coordinates 
{(2.44375, 0.5) (2.44375, 3.5) (2.46375, 3.5) (2.46375, 0.5) (2.44375, 0.5)};
\addplot [mark=none, very thick, dashed, black] coordinates 
{(2.51, 0.5) (2.47625, 0.5) (2.47625, 3.5) (2.51, 3.5)};
\addplot [mark=none, very thick, dashed, black] coordinates 
{
(2.44125, 4.5) (2.44125, 5.5) 
(2.43625, 5.5) (2.43625, 6.5)
(2.43375, 6.5) (2.43375, 7.5)
(2.43125, 7.5) (2.43125, 8.5)
(2.42875, 8.5) (2.42875, 9.5)
(2.47375, 9.5)
(2.47375, 8.5) (2.47125, 8.5)
(2.47125, 7.5) (2.46625, 7.5)
(2.46625, 6.5) (2.46125, 6.5)
(2.46125, 5.5) (2.45875, 5.5)
(2.45875, 4.5) (2.44125, 4.5)
};
\addplot [mark=none, very thick, dashed, black] coordinates 
{
(2.51, 9.5) (2.48125, 9.5)
(2.48125, 8.5) (2.47875, 8.5)
(2.47875, 5.5) (2.48125, 5.5)
(2.48125, 4.5) (2.51, 4.5)
};
\addplot [mark=none, very thick, dashed, black] coordinates 
{
(2.43375, 10.5) (2.43375, 11.5)
(2.43125, 11.5) (2.43125, 12.5)
(2.42875, 12.5) (2.42875, 13.5)
(2.42125, 13.5) (2.42125, 14.5)
(2.41375, 14.5) (2.41375, 15.5)
(2.48125, 15.5)
(2.48125, 14.5) (2.47625, 14.5)
(2.47625, 13.5) (2.47125, 13.5)
(2.47125, 12.5) (2.46625, 12.5)
(2.46625, 11.5) (2.46375, 11.5)
(2.46375, 10.5) (2.43375, 10.5)
};
\addplot [mark=none, very thick, dashed, black] coordinates 
{
(2.51, 15.5)
(2.48875, 15.5) (2.48875, 14.5)
(2.48375, 14.5) (2.48375, 13.5)
(2.48125, 13.5) (2.48125, 12.5)
(2.47875, 12.5) (2.47875, 10.5)
(2.51, 10.5)
};
\legend{
\footnotesize TPS1 uniform\;\;\;\;,
\footnotesize TPS1 skyrmion,
\footnotesize TPS1 broken\;\;\;\;,
\footnotesize TPS1 target,
\footnotesize TPS2 uniform\;\;\;\;,
\footnotesize TPS2 skyrmion,
\footnotesize TPS2 broken\;\;\;\;,
\footnotesize TPS2 target,
\footnotesize MPS uniform\;\;\;\;,
\footnotesize MPS skyrmion,
\footnotesize MPS broken\;\;\;\;,
\footnotesize MPS target
}
\end{axis}
\end{tikzpicture}
\caption[Experiment of Section~\ref{mpslabel:sec:critical_D}: Reliability of schemes in critical transition parameter detection]{Experiment of Section~\ref{mpslabel:sec:critical_D}.
Each marker corresponds to one simulation carried out with one of the three integrators, for a DMI parameter $D$ with one particular time-step size $k$.
The shape of a marker characterizes the qualitative result after relaxing the skyrmion-like initial state until the equilibrium state is reached, i.e., whether a quasi-uniform, a skyrmion-like, a broken unsymmetrical, or a target skyrmion state is obtained.
The results computed with either of the practical midpoint schemes, Algorithm~\ref{mpslabel:alg:mps_fp} or Algorithm~\ref{mpslabel:alg:mps_newton}, coincide.}
\label{mpslabel:fig:critical_D}
\end{figure}
\begin{figure}[h]
\centering
\begin{subfigure}{0.48\textwidth}
\begin{tikzpicture}
\pgfplotstableread{plots/fert/energy_at_1ns.dat}{\slike}
\pgfplotstableread{plots/energy_zoom_tps2_k1by100.dat}{\zoom}
\begin{axis}[
width = 75mm,
height = 60mm,
xlabel={\footnotesize$D$ [\si{\milli\joule\per\square\meter}]},
ylabel={\footnotesize final energy [$10^{-18}\si{\joule}$]},
xmin=5.8,
xmax=7.2,
xtick={6.0, 6.1, 6.2, 6.3, 6.4, 6.5, 6.6, 6.7, 6.8, 6.9, 7.0},
xticklabels={6, , 6.2, , 6.4, , 6.6, , 6.8, , 7},
legend style={legend pos=south west, legend cell align = left}
]
\addplot[teal, ultra thick, mark=*, mark size=2] table[x=D, y=energy]{\zoom};
\addplot[gray, ultra thick, dashed, mark=o, mark size=4, mark options={solid}] table[x=D, y=energy]{\slike};
\legend{
\footnotesize higher resolution,
\footnotesize interpolated
}
\end{axis}
\end{tikzpicture}
\end{subfigure}
\begin{subfigure}{0.48\textwidth}
\begin{tikzpicture}
\pgfplotstableread{plots/plot_energy.dat}{\data}
\begin{axis}[
width = 75mm,
height = 60mm,
xlabel={\footnotesize Time [\si{\nano\second}]},
xmin=-0.05,
xmax=1.1,
ymin=-5.6e-1,
ymax=-2.8e-1,
]
\addplot[red, dashed, thick] table[x=t, y=D_below_skyrmion]{\data};
\addplot[purple, thick] table[x=t, y=D_skyrmion]{\data};
\addplot[teal, thick] table[x=t, y=D_broken1]{\data};
\addplot[black, thick] table[x=t, y=D_broken2]{\data};
\addplot[blue, thick] table[x=t, y=D_broken3]{\data};
\addplot[green, thick] table[x=t, y=D_target]{\data};
\addplot[brown, dashed, thick] table[x=t, y=D_above_target]{\data};
\legend{\footnotesize$D=6.6500$, \footnotesize$D=6.6525$, \footnotesize$D=6.6550$, \footnotesize$D=6.6575$, \footnotesize$D=6.6600$, \footnotesize$D=6.6625$, \footnotesize$D=6.6650$}
\end{axis}
\end{tikzpicture}
\end{subfigure}
\medskip
\medskip
\captionsetup[subfigure]{labelformat=empty}
\centering
\hfill
\begin{subfigure}[b]{0.15\textwidth}
\includegraphics[width=\textwidth]{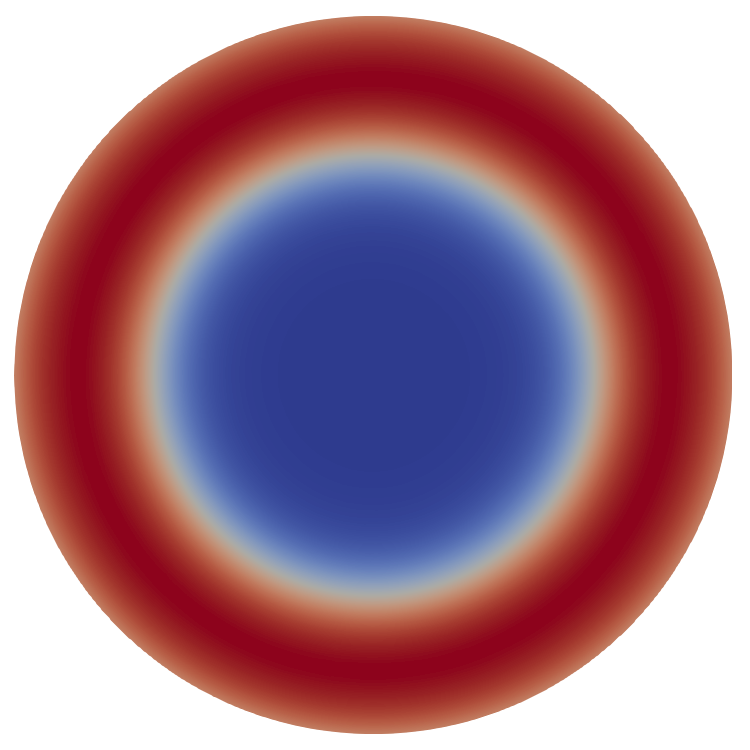}
\caption{\footnotesize$D = 6.6525$}
\end{subfigure}
\hfill
\begin{subfigure}[b]{0.15\textwidth}
\includegraphics[width=\textwidth]{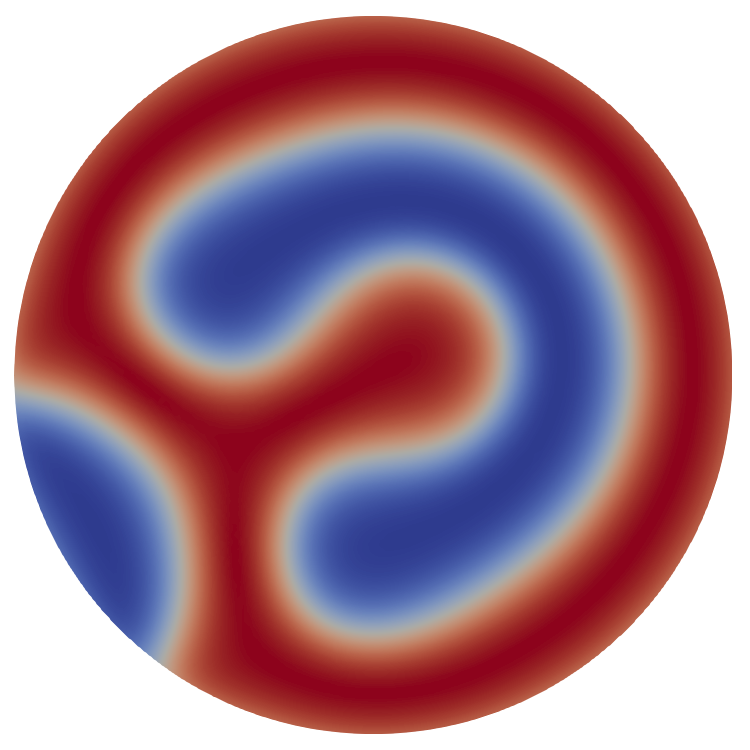}
\caption{\footnotesize$D = 6.655$}
\end{subfigure}
\hfill
\begin{subfigure}[b]{0.15\textwidth}
\includegraphics[width=\textwidth]{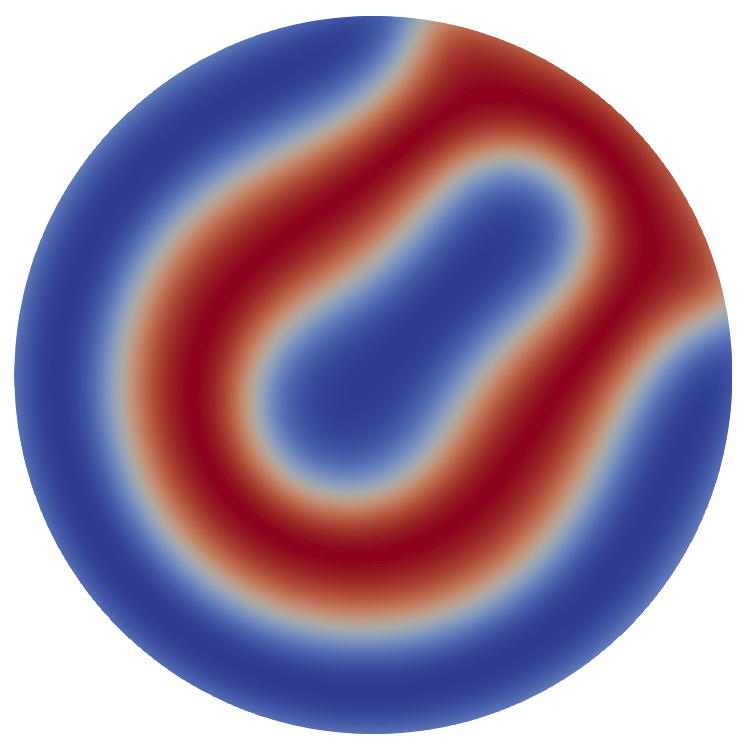}
\caption{\footnotesize$D = 6.6575$}
\end{subfigure}
\hfill
\begin{subfigure}[b]{0.15\textwidth}
\includegraphics[width=\textwidth]{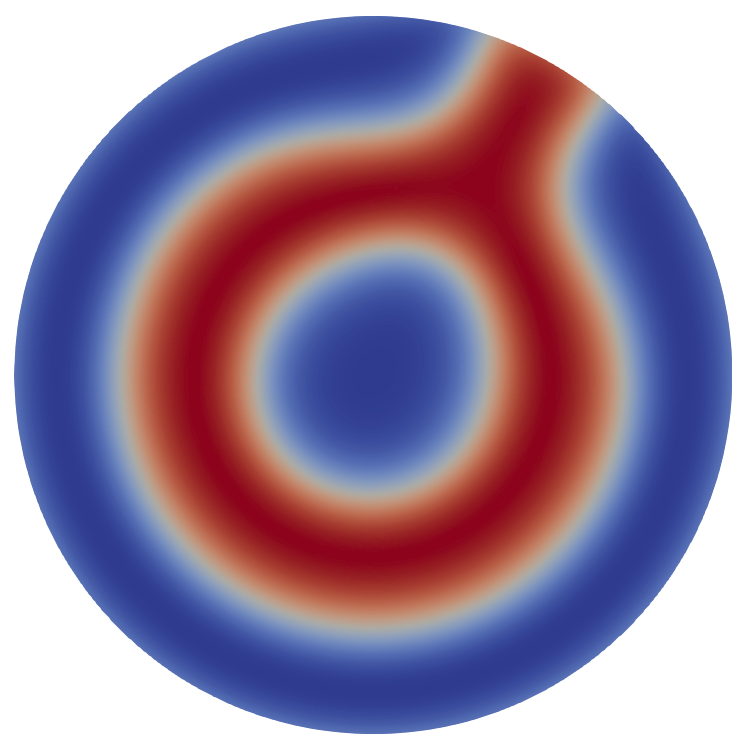}
\caption{\footnotesize$D = 6.66$}
\end{subfigure}
\hfill
\begin{subfigure}[b]{0.15\textwidth}
\includegraphics[width=\textwidth]{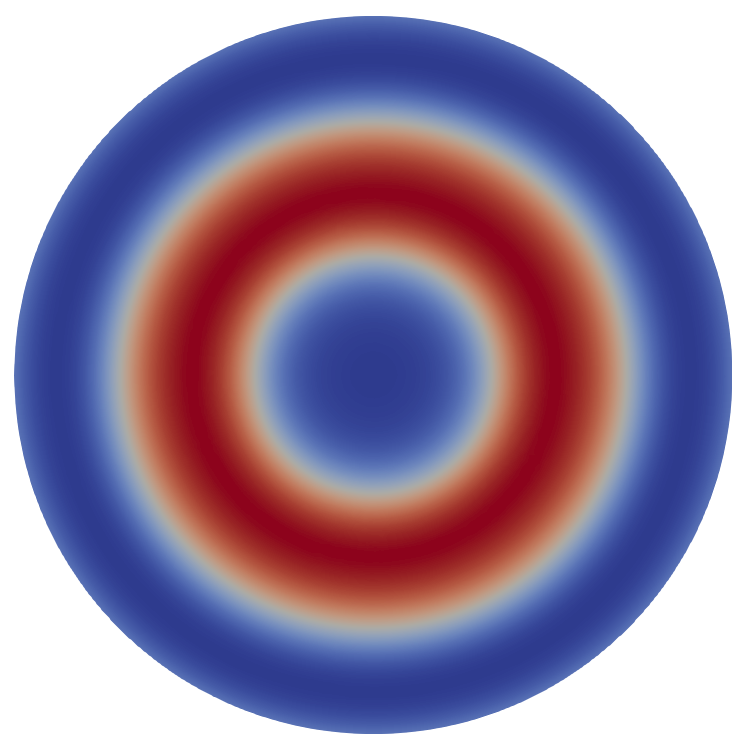}
\caption{\footnotesize$D = 6.6625$}
\end{subfigure}
\hfill
\caption[Experiment of Section~\ref{mpslabel:sec:critical_D}: Details on final energies, energy evolution, and 2D view of broken symmetry states]{Experiment of Section~\ref{mpslabel:sec:critical_D}.
Details on the simulations for critical transition values between $D=\SI{6}{\milli\joule\per\square\metre}$ and $D=\SI{7}{\milli\joule\per\square\metre}$ with the second-order tangent plane scheme and time-step size $k = \SI{1/100}{\pico\second}$.
Left: Critical area of Figure~\ref{mpslabel:fig:relaxed:skyrmion-like}(left) recomputed with higher resolution reveals the predicted jump.
Right: Evolution of the total energy for different DMI constants $D$ emphasizing the transition dynamics.
Bottom: Relaxed states colored by $\mm_3$ ranging from $-1$ (blue) to $+1$ (red).
We observed that any relaxed state with broken symmetry obtained in the experiment of this section (marked with a cross $\times$ in Figure~\ref{mpslabel:fig:critical_D}), qualitatively coincides with one of the three broken-symmetry states obtained by TPS2 and $k=\SI{1/100}{\pico\second}$ displayed here.}
\label{mpslabel:fig:energy_Dcrit}
\end{figure}

We conclude that the tangent plane schemes are preferable for uncritical simulations as in Section~\ref{mpslabel:sec:fert} or~\cite[Section~4.2]{hpprss2019}, where small deteriorations of the energy are acceptable, as they lead to already accurate results for much coarser time discretizations.
However, when it comes to the simulation of dynamics, which are very sensitive to small inaccuracies and crucially depend on an accurate energy evolution, the midpoint scheme yields the most reliable results.

\subsection{Numerical study on the CFL conditions}\label{mpslabel:sec:numerics_cfl}
Our results from Section~\ref{mpslabel:sec:fp} and Section~\ref{mpslabel:sec:newton}, respectively, provide sufficient CFL conditions guaranteeing well-posedness and stability of the practical midpoint schemes in Theorem~\ref{mpslabel:thm:main:fp}{\textrm{(i)}} and Theorem~\ref{mpslabel:thm:main:newton:convergence}.
In this section we investigate whether the CFL conditions arising from theory are also necessary in practice, or if they are technical artifacts possibly caused by unsharp estimates.

We consider the unit cube $\Omega \subset \R^3$ centered at the origin.
Steered by the exchange-only  effective field $\heff(\mm) = \lex^2 \Lapl\mm$, the so-called initial \emph{hedgehog} state $\mm^0 \in \HH^1(\Omega; \sphere)$ with $\mm^0(\xx) := \xx / |\xx| \in \sphere$ is relaxed towards equilibrium.
The exchange length $\lex > 0$ and the Gilbert damping parameter $\alpha > 0$ are fixed at $1$.
The other discretization parameters --- namely the mesh-size $h > 0$, the time-step size $k > 0$, and the nonlinear solver accuracy $\eps > 0$ --- are subject to the numerical studies and are specified separately for each experiment.
Linear systems are solved with GMRES and accuracy $10^{-14}$.
For given $N \in \N$, the geometry is discretized by a structured mesh consisting of $(N+1)^3$ vertices and $6 N^3$ elements as described in \cite[Section~5.2]{prsehhsmp2020}, leading to a uniform mesh $\T_h$ of congruent tetrahedra, each of diameter $h_{\operatorname{max}} = \sqrt{3} / N$ and with shortest edge length $h_{\operatorname{min}} = 1 / N$.
To break symmetry, the discontinuity of the hedgehog state at the origin is discretized via $\mm_h^0(\0) := \ee_3 \in \sphere$, while $\mm_h^0(\zz) := \zz / |\zz| \in \sphere$ for all other $\0 \not= \zz \in \NN_h$.

\subsubsection{Feasible discretization parameters for nonlinear solvers}\label{mpslabel:sec:cfl_feasibility}
In the next section we carry out a numerical study on the CFL coupling of the time-step size to the mesh size arising from our analysis.
Since the constants hidden in CFL conditions are usually not readily available, we need to propose an appropriate criterion for the classification of given discretization parameters as feasible or non-feasible.
Hence, the goal is to derive such a criterion from the numerical experiment in this section.

For fixed mesh size $h_{\operatorname{min}} = 1/8$, nonlinear solver tolerance $\eps = 10^{-8}$, and starting from a rather fine time discretization $k = 0.00016$, we iteratively increase the time-step size by $25\%$ multiple times and track the number of nonlinear iterations required to meet the stopping criterion~\eqref{mpslabel:eq:stopping_fp} or \eqref{mpslabel:eq:stopping_newton}, respectively, in the first time-step of Algorithm~\ref{mpslabel:alg:mps_fp} or Algorithm~\ref{mpslabel:alg:mps_newton}.
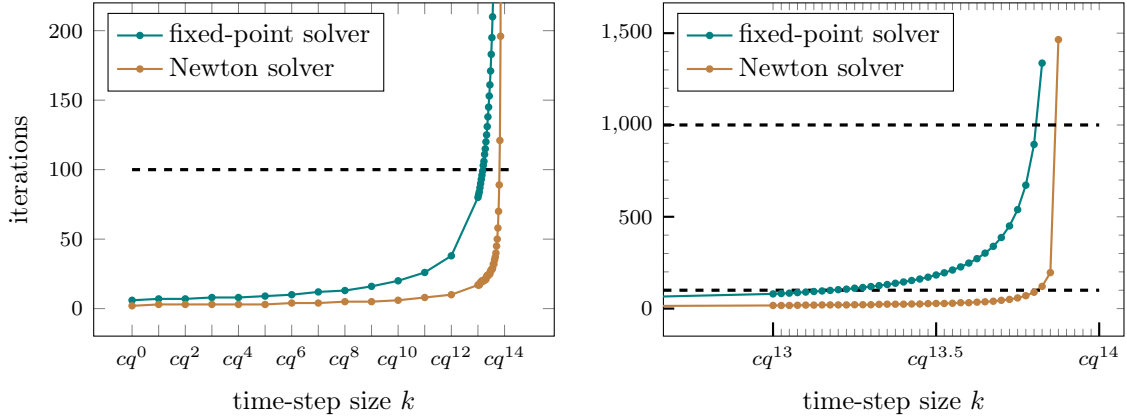
\begin{figure}[h]
\centering
\begin{subfigure}{0.48\textwidth}
\begin{tikzpicture}
\pgfplotstableread{plots/CFL_iter_vs_k/fp_h1By8_iter_vs_k.dat}{\fpdata}
\pgfplotstableread{plots/CFL_iter_vs_k/newton_h1By8_iter_vs_k.dat}{\newtondata}
\begin{semilogxaxis}[
xlabel={\footnotesize time-step size $k$},
ylabel={\footnotesize iterations},
xtick={1.600000e-01, 2.000000e-01, 2.500000e-01, 3.125000e-01, 3.906250e-01, 4.882812e-01, 6.103516e-01, 7.629395e-01, 9.536743e-01, 1.192093, 1.490116, 1.862645, 2.328306, 2.910383, 3.637979},
xticklabels={
$cq^0$, 
, 
$cq^2$, 
, 
$cq^4$, 
, 
$cq^6$, 
, 
$cq^8$, 
, 
$cq^{10}$,
,
$cq^{12}$,
,
$cq^{14}$
},
height = 60mm,
width=\textwidth,
legend style={legend pos=north west, legend cell align = left},
ymax=220,
ytick distance=50,
]
\addplot[teal, mark=*, mark size=1, thick] table[x=k, y=iter] {\fpdata};
\addplot[brown, mark=*, mark size=1, thick] table[x=k, y=iter] {\newtondata};
\addplot[black, dashed, very thick] coordinates {(0.16, 100) (4, 100)};
\legend{\footnotesize fixed-point solver,\footnotesize  Newton solver}
\end{semilogxaxis}
\end{tikzpicture}
\end{subfigure}
\begin{subfigure}{0.48\textwidth}
\begin{tikzpicture}
\pgfplotstableread{plots/CFL_iter_vs_k/fp_h1By8_iter_vs_k.dat}{\fpdata}
\pgfplotstableread{plots/CFL_iter_vs_k/newton_h1By8_iter_vs_k.dat}{\newtondata}
\begin{semilogxaxis}[
xlabel={\footnotesize time-step size $k$},
xtick={2.910383, 2.926664, 2.943036, 2.959500, 2.976056, 2.992704, 3.009446, 3.026282, 3.043211, 3.060235, 3.077355, 3.094570, 3.111882, 3.129290, 3.146796, 3.164400, 3.182102, 3.199903, 3.217804, 3.235805, 3.253907, 3.272110, 3.290414, 3.308822, 3.327332, 3.345945, 3.364663, 3.383486, 3.402414, 3.421447, 3.440587, 3.459835, 3.479190, 3.498653, 3.518225, 3.537907, 3.557698, 3.577601, 3.597614, 3.617740, 3.637978},
xticklabels={$cq^{13}$,,,,,,,,,,,,,,,,,,,,$cq^{13.5}$,,,,,,,,,,,,,,,,,,,,$cq^{14}$},
height = 60mm,
width=\textwidth,
legend style={legend pos=north west, legend cell align = left},
xmin=2.7,
xmax=3.7,
ymin=-150,
minor y tick num=4,
ytick={0, 500, 1000, 1500},
every minor tick/.append style={minor tick length=2pt},
every major tick/.append style={major tick length=2.75pt},
legend pos=north west,
]
\addplot[teal, mark=*, mark size=1, thick] table[x=k, y=iter] {\fpdata};
\addplot[brown, mark=*, mark size=1, thick] table[x=k, y=iter] {\newtondata};
\addplot[black, dashed, very thick] coordinates {(2.7, 1000) (3.637978, 1000)};
\addplot[black, dashed, very thick] coordinates {(2.7, 100) (3.637978, 100)};
\addplot[black, thick] coordinates {(2.910383, -150) (2.910383, -75)};
\addplot[black, thick] coordinates {(3.253907, -150) (3.253907, -75)};
\addplot[black, thick] coordinates {(3.637978, -150) (3.637978, -75)};
\addplot[black, thick] coordinates {(2.7, 0) (2.72, 0)};
\addplot[black, thick] coordinates {(2.7, 500) (2.72, 500)};
\addplot[black, thick] coordinates {(2.7, 1000) (2.72, 1000)};
\addplot[black, thick] coordinates {(2.7, 1500) (2.72, 1500)};
\legend{\footnotesize fixed-point solver,\footnotesize  Newton solver}
\end{semilogxaxis}
\end{tikzpicture}
\end{subfigure}
\caption[Experiment of Section~\ref{mpslabel:sec:cfl_feasibility}: Linearization strategies in practical midpoint schemes: CFL threshold detection and comparison of iteration numbers]{Experiment of Section~\ref{mpslabel:sec:cfl_feasibility}.
Left: The number of nonlinear iterations rapidly grows as the time-step size $k = cq^j > 0$ with $c=0.00016$ and $q = 5/4$ approaches the threshold value $k \to k_{\operatorname{thresh}}(h)$.
Right: Zoom into the critical area between $cq^{13}$ and $cq^{14}$ where the blow-up occurs.}
\label{mpslabel:fig:cfl_iter_vs_k}
\end{figure}
The results depicted in Figure~\ref{mpslabel:fig:cfl_iter_vs_k} show that for both practical midpoint schemes the number of nonlinear iterations stays well-bounded until a certain threshold value $k_{\operatorname{thresh}}(h) > 0$ is approached.
Close to the threshold value, however, an increase of the time-step size by $25\%$ impacts the number of nonlinear iterations by numbers of magnitude, if the solver converges at all.
Hence, it is reasonable to classify time-step sizes $k > 0$ with $k < k_{\operatorname{thresh}}(h)$ as feasible, and those with $k > k_{\operatorname{thresh}}(h)$ as non-feasible.
Surprisingly, despite the different theoretical CFL conditions $k = o(h^2)$ and $k=o(h^{7/3})$ imposed in Proposition~\ref{mpslabel:prop:fp} and Theorem~\ref{mpslabel:thm:main:newton:convergence}, respectively, the threshold value $k_{\operatorname{thresh}}(h)$ seems to coincide for Algorithm~\ref{mpslabel:alg:mps_fp} and Algorithm~\ref{mpslabel:alg:mps_newton}.
This observation is investigated further in Section~\ref{mpslabel:sec:cfl_k_vs_h}.
Finally, we note that in view of the quadratic convergence of Newton's method, it is not surprising that the Newton solver clearly outperforms the fixed point iteration in terms of nonlinear iteration numbers.

Motivated by the results of this experiment, in Section~\ref{mpslabel:sec:cfl_k_vs_h} we will use the following criterion to classify feasibility of discretization parameters:
If for any given $(h, k, \eps)$ the respective stopping criterion~\eqref{mpslabel:eq:stopping_fp} or \eqref{mpslabel:eq:stopping_newton}, is not met after at most $\num{100}$ iterations of the nonlinear solver in Algorithm~\ref{mpslabel:alg:mps_fp} or Algorithm~\ref{mpslabel:alg:mps_newton}, we consider the practical midpoint scheme as non-feasible for this combination of discretization parameters $h, k, \eps > 0$.
Given $h > 0$ this classification of feasibility is an estimate for the threshold value $k_{\operatorname{thresh}}(h) > 0$ such that the nonlinear solver converges for $0 < k < k_{\operatorname{thresh}}(h)$ and diverges for $k > k_{\operatorname{thresh}}(h)$.
Although only an approximation, Figure~\ref{mpslabel:fig:cfl_iter_vs_k} shows that in view of practical applicability this estimation of $k_{\operatorname{thresh}}(h)$ seems quite appropriate as nonlinear iteration numbers increase drastically as $k$ approaches $k_{\operatorname{thresh}}(h)$.

\subsubsection{Coupling of time-step size to mesh size}\label{mpslabel:sec:cfl_k_vs_h}
We consider the CFL conditions $k = o(h^2)$ and $k = o(h^{7/3})$ from Theorem~\ref{mpslabel:thm:main:fp}{\textrm{(i)}} and Theorem~\ref{mpslabel:thm:main:newton:convergence}, respectively, sufficient to guarantee convergence of the fixed point iteration and the Newton solver.
For different mesh sizes $h_{\operatorname{min}} \in \{2^{-j} \colon j = 1, \dots, 5\}$, time-step sizes $k \in \{0.00016 \cdot \big(\frac{5}{4}\big)^j \colon j = 0, \dots, 27\}$, and nonlinear solver accuracy $\eps > 0$ fixed at $10^{-8}$, we investigate convergence of the nonlinear solver for one time-step of relaxing the initial hedgehog state.
As argued in Section~\ref{mpslabel:sec:cfl_feasibility}, the threshold value of $100$ nonlinear iterations is used to classify feasibility of the discretization parameters.

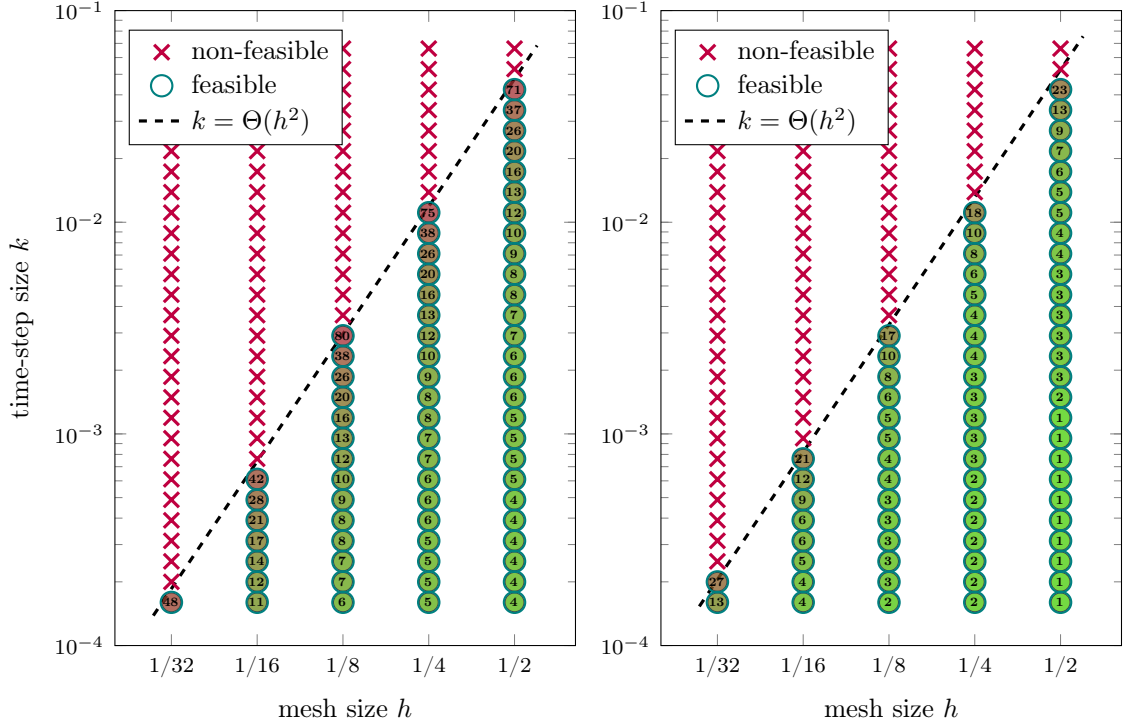
\begin{figure}[h]
\centering
\begin{subfigure}{0.48\textwidth}
\begin{tikzpicture}
\pgfplotstableread{plots/CFL_k_vs_h/fp_data_bad.dat}{\baddata}
\pgfplotstableread{plots/CFL_k_vs_h/fp_data_good.dat}{\gooddata}
\pgfplotstableread{plots/CFL_k_vs_h/data_h.dat}{\hdata}
\begin{loglogaxis}[
xlabel={\footnotesize mesh size $h$},
ylabel={\footnotesize time-step size $k$},
xtick={0.03125, 0.0625, 0.125, 0.25, 0.5},
xticklabels={1/32, 1/16, 1/8, 1/4, 1/2},
height = 100mm,
width=\textwidth,
legend style={legend pos=north west, legend cell align = left},
ymax=1e-1,
ymin=1e-4,
]
\addplot[only marks, purple, mark=x, mark size=4, very thick] table[x=h, y=k] {\baddata};
\addplot[only marks, teal, mark=o, mark size=4, thick] table[x=h, y=k] {\gooddata};
\addplot[black, dashed, very thick] table[x=h, y expr={0.19*(\thisrow{h}*\thisrow{h})}]{\hdata};

\pgfplotstablegetrowsof{plots/CFL_k_vs_h/fp_data_good.dat}
\pgfmathsetmacro{\rows}{\pgfplotsretval-1}
\foreach \i in {0,...,\rows}{%
  \pgfplotstablegetelem{\i}{[index] 0}\of{\gooddata}
  \let\hval\pgfplotsretval
  \pgfplotstablegetelem{\i}{[index] 1}\of{\gooddata}
  \let\kval\pgfplotsretval
  \pgfplotstablegetelem{\i}{[index] 2}\of{\gooddata}
  \let\fpval\pgfplotsretval
  \edef\temp{\noexpand\node[fill={rgb:purple,\the\numexpr2*\fpval\relax;green!80!black,30;yellow,10}, opacity=0.75, draw=black,circle,minimum size=0.5cm,inner sep=0pt, anchor=center, scale=0.5] at (axis cs:\hval, \kval) {};
  \noexpand\node at (axis cs:\hval, \kval) [scale=0.6] {\noexpand\tiny$\,\noexpand\mathbf{\fpval}\;$};
}
\temp
}
\legend{\footnotesize non-feasible, \footnotesize feasible, \footnotesize$k = \Theta(h^2)$}
\end{loglogaxis}
\end{tikzpicture}
\end{subfigure}
\begin{subfigure}{0.48\textwidth}
\begin{tikzpicture}
\pgfplotstableread{plots/CFL_k_vs_h/newton_data_bad.dat}{\baddata}
\pgfplotstableread{plots/CFL_k_vs_h/newton_data_good.dat}{\gooddata}
\pgfplotstableread{plots/CFL_k_vs_h/data_h.dat}{\hdata}
\begin{loglogaxis}[
xlabel={\footnotesize mesh size $h$},
xtick={0.03125, 0.0625, 0.125, 0.25, 0.5},
xticklabels={1/32, 1/16, 1/8, 1/4, 1/2},
height = 100mm,
width=\textwidth,
legend style={legend pos=north west, legend cell align = left},
ymax=1e-1,
ymin=1e-4,
]
\addplot[only marks, purple, mark=x, mark size=4, very thick] table[x=h, y=k] {\baddata};
\addplot[only marks, teal, mark=o, mark size=4, thick] table[x=h, y=k] {\gooddata};
\addplot[black, dashed, very thick] table[x=h, y expr={0.210*(\thisrow{h}*\thisrow{h})}]{\hdata};

\pgfplotstablegetrowsof{plots/CFL_k_vs_h/newton_data_good.dat}
\pgfmathsetmacro{\rows}{\pgfplotsretval-1}
\foreach \i in {0,...,\rows}{%
  \pgfplotstablegetelem{\i}{[index] 0}\of{\gooddata}
  \let\hval\pgfplotsretval
  \pgfplotstablegetelem{\i}{[index] 1}\of{\gooddata}
  \let\kval\pgfplotsretval
  \pgfplotstablegetelem{\i}{[index] 2}\of{\gooddata}
  \let\fpval\pgfplotsretval
  \edef\temp{\noexpand\node[fill={rgb:purple,\the\numexpr2*\fpval\relax;green!80!black,30;yellow,10}, opacity=0.75, draw=black,circle,minimum size=0.5cm,inner sep=0pt, anchor=center, scale=0.5] at (axis cs:\hval, \kval) {};
  \noexpand\node at (axis cs:\hval, \kval) [scale=0.6] {\noexpand\tiny$\,\noexpand\mathbf{\fpval}\;$};
}
\temp
}
\legend{\footnotesize non-feasible, \footnotesize feasible, \footnotesize$k = \Theta(h^2)$}
\end{loglogaxis}
\end{tikzpicture}
\end{subfigure}
\caption[Experiment of Section~\ref{mpslabel:sec:cfl_k_vs_h}:
Convergence of the nonlinear solvers in the practical midpoint schemes]{Experiment of Section~\ref{mpslabel:sec:cfl_k_vs_h}.
Convergence of the nonlinear solvers in the practical midpoint schemes is investigated.
For feasible parameters, the number of nonlinear iterations is given inside the circle.
Left: Practical midpoint scheme based on the fixed-point iteration (Algorithm~\ref{mpslabel:alg:mps_fp}).
Right: Practical midpoint scheme based on the Newton iteration (Algorithm~\ref{mpslabel:alg:mps_newton}).
The data points show feasibility if $k = \Theta(h^\beta)$ with possible slopes $1.93 \le \beta_{\operatorname{fixed-point}} \le 2.09$ and $1.85 \le \beta_{\operatorname{newton}} \le 2.01$.
}
\label{mpslabel:fig:cfl_k_vs_h}
\end{figure}

The results of this experiment shown in Figure~\ref{mpslabel:fig:cfl_k_vs_h} give insight to the applicability of the practical midpoint schemes:
First, for Algorithm~\ref{mpslabel:alg:mps_fp} the theoretically sufficient CFL condition $k = o(h^2)$ is shown to be sharp in practice.
Further, since the experiment reveals the same CFL condition $k = o(h^2)$ to be sufficient for convergence of the Newton solver, we expect that the well-posedness analysis of the Newton iteration can be improved weakening the CFL condition in Theorem~\ref{mpslabel:thm:main:newton:convergence} from $k = o(h^{7/3})$ to $k = o(h^2)$.
We note that also in the simulation of skyrmion dynamics in Section~\ref{mpslabel:sec:critical_D} both practical midpoint schemes were equivalently restrictive on the time discretization.
Lastly, this experiment shows that, in terms of iteration numbers, the Newton solver outperforms the fixed-point solver as expected from theory (quadratic vs.\ linear convergence).

\subsubsection{Constraint violation induced by nonlinear solver accuracy}
\label{mpslabel:sec:eps_vs_unitLength}
In contrast to the fixed-point iteration from Section~\ref{mpslabel:sec:fp}, the Newton iteration from Section~\ref{mpslabel:sec:newton} does not inherently preserve discrete unit-length, i.e., $\mm_{h\eps}^i \not\in \Mh$ for the Newton linearization.
To quantify the impact of the Newton solver on the discrete magnetization length, the initial hedgehog state is relaxed to equilibrium ($T = 5$) using different nonlinear solver accuracies $\eps \in \{10^{-j/2} \colon j = 0, \dots, 24\}$.
We simulate the dynamics for $h_{\operatorname{min}} = 1/4$ and $h_{\operatorname{min}} = 1/8$ with time-step sizes chosen roughly half the value of $k_{\operatorname{thresh}}(h)$ from Section~\ref{mpslabel:sec:cfl_feasibility}.
In Figure~\ref{mpslabel:fig:eps_vs_constraint} we plot the deviations
\begin{align}\label{mpslabel:deviation:maxmin}
\max_{\zz\in\Nh}|\mm_{h\eps k}(T, \zz)| - 1
\qquad\text{and}\qquad
1 - \min_{\zz\in\Nh}|\mm_{h\eps k}(T, \zz)|
\end{align}
over the nonlinear solver accuracy $\eps > 0$.
In this experiment for the practical midpoint scheme based on the Newton iteration deviation from unit-length decreases with rate between $\Theta(\eps^{9/10})$ and $\Theta(\eps^{8/10})$ as $\eps \to 0$.
In contrast to that, for the practical midpoint scheme based on the fixed-point iteration the deviation from unit-length is unaffected by the choice of $\eps > 0$ as expected from theory.
\begin{figure}[htb!]
\centering
\begin{tikzpicture}
\pgfplotstableread{plots/CFL_eps_vs_constraint/compact_deviation_newton_h1By4.dat}{\newtondataByFour}
\pgfplotstableread{plots/CFL_eps_vs_constraint/compact_deviation_fp_h1By4.dat}{\fpdataByFour}
\pgfplotstableread{plots/CFL_eps_vs_constraint/compact_deviation_newton_h1By8.dat}{\newtondataByEight}
\pgfplotstableread{plots/CFL_eps_vs_constraint/compact_deviation_fp_h1By8.dat}{\fpdataByEight}
\pgfplotstableread{plots/CFL_eps_vs_constraint/ref_0.9.dat}{\refdataDotNine}
\pgfplotstableread{plots/CFL_eps_vs_constraint/ref_0.8.dat}{\refdataDotEight}
\begin{loglogaxis}[
xlabel={\footnotesize nonlinear solver accuracy $\eps$},
ylabel={\footnotesize deviation},
xtick={1e-0, 1e-2, 1e-4, 1e-6, 1e-8, 1e-10, 1e-12},
ytick={1e-4, 1e-6, 1e-8, 1e-10, 1e-12, 1e-14},
height = 60.5mm,
width=80mm,
legend style={legend pos=outer north east, legend cell align = left},
]
\addplot[brown, only marks, mark=o, mark size=2, thick] table[x=eps, y=errInfAbove] {\newtondataByFour};
\addplot[brown, only marks, mark=x, mark size=2, thick] table[x=eps, y=errInfBelow] {\newtondataByFour};
\addplot[teal, only marks, mark=o, mark size=2, thick] table[x=eps, y=errInfAbove] {\newtondataByEight};
\addplot[teal, only marks, mark=x, mark size=2, thick] table[x=eps, y=errInfBelow] {\newtondataByEight};
\addplot[purple, only marks, mark=o, mark size=2, thick] table[x=eps, y=errInfAbove] {\fpdataByFour};
\addplot[purple, only marks, mark=x, mark size=2, thick] table[x=eps, y=errInfBelow] {\fpdataByFour};
\addplot[gray, only marks, mark=o, mark size=2, thick] table[x=eps, y=errInfAbove] {\fpdataByEight};
\addplot[gray, only marks, mark=x, mark size=2, thick] table[x=eps, y=errInfBelow] {\fpdataByEight};
\addplot[black, dashed, very thick] table[x=eps, y expr={0.03*(\thisrow{epsPow})}]{\refdataDotNine};
\addplot[black, dashed, very thick] table[x=eps, y expr={0.0003*(\thisrow{epsPow})}]{\refdataDotEight};
\legend{
\footnotesize Newton $h=1/4$: $\max - 1$,
\footnotesize Newton $h=1/4$: $1 -\min$,
\footnotesize Newton $h=1/8$: $\max - 1$,
\footnotesize Newton $h=1/8$: $1 -\min$,
\footnotesize fixed-point $h=1/4$: $\max - 1$,
\footnotesize fixed-point $h=1/4$: $1 -\min$,
\footnotesize fixed-point $h=1/8$: $\max - 1$,
\footnotesize fixed-point $h=1/8$: $1 -\min$,,
\footnotesize $\Theta(\eps^{9/10})$ and $\Theta(\eps^{8/10})$,
}
\end{loglogaxis}
\end{tikzpicture}
\caption[Experiment of Section~\ref{mpslabel:sec:eps_vs_unitLength}:
Dependence of the constraint violation on the nonlinear solver accuracy $\eps > 0$]{Experiment of Section~\ref{mpslabel:sec:eps_vs_unitLength}.
Dependence of the constraint violation~\eqref{mpslabel:deviation:maxmin} on the nonlinear solver accuracy $\eps > 0$ is investigated.
For the Newton solver the deviation from unit-length decreases as $\eps \to 0$.
No obvious correlation is observed for the fixed-point iteration, which is expected, since it is designed to be constraint preserving; see Proposition~\ref{mpslabel:prop:fp}{\rm(iii)}.}
\label{mpslabel:fig:eps_vs_constraint}
\end{figure}
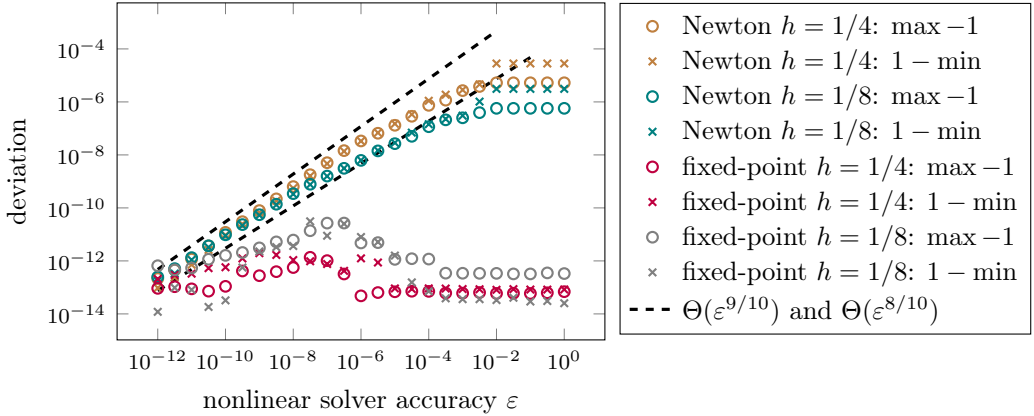

As in this experiment both practical midpoint schemes were stable (i.e., non energy-increasing) even for nonlinear solver accuracies as large as $\eps = 1$, an experimental setup for the investigation of the coupling to the mesh size $\eps = \mathcal{O}(h)$ and $\eps = \mathcal{O}(h^{3/2})$ from Theorem~\ref{mpslabel:thm:main:fp}{\textrm{(i)}} and Theorem~\ref{mpslabel:thm:main:newton:convergence}, respectively, is yet to be proposed in a future numerical study.

\section[Analysis of the ideal midpoint scheme]{Proof of Theorem~\ref{mpslabel:thm:main} for the ideal midpoint scheme} \label{mpslabel:sec:proofs1}

\subsection{Existence of solutions, unit-length constraint, and stability}

\begin{proof}[Proof of Theorem~\ref{mpslabel:thm:main}{\textrm{(i)}}]
Let $i \in \N_0$ be arbitrary.
Define $\FF \colon \Vh \to \Vh$ by
\begin{equation*}
\FF (\pphi_h) := \pphi_h - \mm_h^i
+ \frac{k}{2} \, \Interp \big[ \pphi_h \times \Ph \heff(\pphi_h) + \alpha \, \pphi_h \times \mm_h^i \big]
\quad
\text{for all } \pphi_h \in \Vh.
\end{equation*}
If $\eeta_h \in \Vh$ satisfies $\FF (\eeta_h) = \0$, then $\mm_h^{i+1} := 2 \eeta_h - \mm_h^i$
satisfies~\eqref{mpslabel:eq:mps}.
Since
\begin{equation*}
\inner[h]{\FF (\pphi_h)}{\pphi_h}
= \inner[h]{\pphi_h - \mm_h^i}{\pphi_h}
\geq 0
\quad
\text{for all } \pphi_h \in \Vh \text{ with } \norm[h]{\pphi_h} = \norm[h]{\mm_h^i} > 0,
\end{equation*}
an application of the Brouwer fixed-point theorem
(see, e.g., \cite[Chapter~IV, Corollary~1.1]{gr1986})
ensures the existence of $\eeta_h \in \Vh$ such that
$\norm[h]{\eeta_h} \leq \norm[h]{\mm_h^i}$
and
$\FF (\eeta_h) = \0$.
This proves that~\eqref{mpslabel:eq:mps} admits a solution $\mm_h^{i + 1} \in \Vh$.

Let $\zz \in \Nh$ be arbitrary.
We test~\eqref{mpslabel:eq:mps} with $\pphih = \mm_h^{i+1/2}(\zz) \vphi_{\zz} \in \Vh$ to obtain that
\begin{equation*}
\inner[h]{d_t \mm_h^{i+1}}{\vphi_{\zz} \mm_h^{i+1/2}(\zz)}
= \frac{\beta_{\zz}}{2k} \left( \abs{\mm_h^{i+1}(\zz)}^2 - \abs{\mm_h^i(\zz)}^2 \right)
= 0.
\end{equation*}
We conclude that $\abs{\mm_h^{i+1}(\zz)} = \abs{\mm_h^i(\zz)}$.
Since $\mm_h^0 \in \Mh$ by assumption, we conclude that $\mm_h^{i+1} \in \Mh$.
\end{proof}

\begin{proof}[Proof of Theorem~\ref{mpslabel:thm:main}{\textrm{(ii)}}]
Let $J \in \N$.
To show~\eqref{mpslabel:eq:stability}, we choose the test function
$\pphih = \alpha \, d_t \mm_h^{i+1} - \Ph \heff(\mm_h^{i + 1/2}) \in \Vh$
in~\eqref{mpslabel:eq:mps}.
We obtain the equality
\begin{equation*}
\inner[h]{\Ph \heff(\mm_h^{i + 1/2})}{d_t \mm_h^{i+1}}
= \alpha \norm[h]{d_t \mm_h^{i+1}}^2.
\end{equation*}
For the left-hand side, it holds that
\begin{equation} \label{mpslabel:eq:aux_eq}
\begin{split}
\inner[h]{\Ph \heff(\mm_h^{i + 1/2})}{d_t \mm_h^{i+1}}
& \! \stackrel{\eqref{mpslabel:eq:pseudo-projection}}{=} \! \dual{\heff(\mm_h^{i + 1/2})}{d_t \mm_h^{i+1}} \\
& \stackrel{\eqref{mpslabel:eq:heff}}{=} - a(\mm_h^{i + 1/2},d_t \mm_h^{i+1})
+ \inner[\Omega]{\ff}{d_t \mm_h^{i+1}} \\
& \stackrel{\eqref{mpslabel:eq:llg:energy}}{=} - \frac{1}{k} ( \E(\mm_h^{i + 1}) - \E(\mm_h^i) ).
\end{split}
\end{equation}
We conclude that
\begin{equation*}
\E(\mm_h^{i + 1}) - \E(\mm_h^i) = - \alpha k \norm[h]{d_t \mm_h^{i+1}}^2.
\end{equation*}
Summation over $i=0,\dots,J-1$ yields~\eqref{mpslabel:eq:stability}.
\end{proof}

\subsection{Weak convergence result}

To start with, we note that the bilinear forms $a(\cdot,\cdot)$ and $\aloc(\cdot,\cdot)$ are continuous, i.e.,
there exists $C_1>0$ such that
\begin{subequations} \label{mpslabel:eq:bilinear_inequalities}
\begin{alignat}{2}
\label{mpslabel:eq:a:continuity}
a(\ppsi,\vvphi) &\leq (C_1 + \norm[L(\LL^2(\Omega);\LL^2(\Omega))]{\ppi}) \norm[\HH^1(\Omega)]{\ppsi}\norm[\HH^1(\Omega)]{\vvphi}
&\quad&
\text{for all } \ppsi, \vvphi \in \HH^1(\Omega), \\
\label{mpslabel:eq:b:continuity}
\aloc(\ppsi,\vvphi) &\leq C_1 \norm[\HH^1(\Omega)]{\ppsi}\norm[\HH^1(\Omega)]{\vvphi}
&&
\text{for all } \ppsi, \vvphi \in \HH^1(\Omega),
\end{alignat}
and satisfy the G{\aa}rding inequality, i.e.,
there exist $C_2>0$ and $C_3 \in \R$ such that
\begin{equation}\label{mpslabel:eq:gaarding}
a(\ppsi,\ppsi) \ge \aloc(\ppsi,\ppsi) \ge C_2 \norm[\HH^1(\Omega)]{\ppsi}^2 - C_3 \norm[\LL^2(\Omega)]{\ppsi}^2
\quad
\text{for all } \ppsi \in \HH^1(\Omega).
\end{equation}
\end{subequations}
The constants $C_1,C_2,C_3$ in~\eqref{mpslabel:eq:bilinear_inequalities} depend on
$\norm[\LL^{\infty}(\Omega)]{\Amat_n}$ and $\norm[\LL^{\infty}(\Omega)]{\JJ_n}$ ($n=1,2,3$),
and $A_0$.
Finally, we consider, besides~\eqref{mpslabel:eq:timeApprox}, the piecewise constant time reconstruction
$\mmhkbar$ defined by $\mmhkbar(t) := \mm_h^{i+1/2}$ for all $i \in \N_0$ and $t \in [t_i,t_{i+1})$.

With these ingredients, we prove the convergence result for Algorithm~\ref{mpslabel:alg:mps}.

\begin{proof}[Proof of Theorem~\ref{mpslabel:thm:main}{\textrm{(iii)}}]
The proof follows the lines of~\cite{bp2006,prs2018}, therefore we only sketch it.
Let $J \in \N$.
Since $\mm_h^J \in \Mh$, $\norm[\LL^2(\Omega)]{\mm_h^J} \leq \abs{\Omega}^{1/2}$.
Hence, combining the inequalities~\eqref{mpslabel:eq:bilinear_inequalities} and the norm equivalence~\eqref{mpslabel:eq:normEquivalence}
with~\eqref{mpslabel:eq:stability},
we obtain the estimate
\begin{equation} \label{mpslabel:eq:stability:proof}
\norm[\HH^1(\Omega)]{\mm_h^J}^2
+ k \sum_{i=0}^{J-1} \norm[\LL^2(\Omega)]{d_t \mm_h^i}^2
\leq C,
\end{equation}
where $C>0$ depends only on the problem data.
We infer the uniform boundedness of the sequences of time reconstructions $\{\mmhk\}$ and $\{\mmhkbar\}$
in $L^{\infty}(\R_{>0};\HH^1(\Omega))$.
Let $T>0$ be arbitrary.
From~\eqref{mpslabel:eq:stability:proof}, it also follows the uniform boundedness of
$\{\mmhk\vert_{\Omega_T}\}$ (resp., $\{\mmhkbar\vert_{\Omega_T}\}$)
in $\HH^1(\Omega_T)$ and in $L^{\infty}(0,T;\HH^1(\Omega))$
(resp., only in $L^{\infty}(0,T;\HH^1(\Omega))$).
With successive extractions of convergent subsequences (not relabeled),
one can show that there exists a common limit $\mm\in L^{\infty}(\R_{>0};\HH^1(\Omega))$
with $\mm\vert_{\Omega_T} \in \HH^1(\Omega_T)$
for which we have the convergences
$\mmhk, \mmhkbar \weakstarto \mm$ in $L^{\infty}(\R_{>0};\HH^1(\Omega))$,
$\mmhk\vert_{\Omega_T}, \mmhkbar\vert_{\Omega_T} \weakstarto \mm\vert_{\Omega_T}$
in $L^{\infty}(0,T ; \HH^1(\Omega))$,
and $\mmhk\vert_{\Omega_T} \weakto \mm\vert_{\Omega_T}$ in $\HH^1(\Omega_T)$,
With the argument of~\cite[Sections~3.2--3.3]{prs2018},
one also gets that the limit function $\mm$ is $\sphere$-valued
and satisfies the initial condition $\mm(0)=\mm^0$ in the sense of traces.

To verify the variational formulation~\eqref{mpslabel:eq:weak:variational},
let $\vvphi \in \CC^{\infty}(\overline{\Omega_T})$.
Let $J \in \N$ the smallest integer such that $T \leq kJ$.
We define the semi-discrete function $\vvphi_h \in \CC^{\infty}([0,kJ];\Vh)$
by $\vvphi_h(t)=\Interp[\vvphi(t)]$ for all $t \in [0,kJ]$.
For $i=0,\dots,J-1$ and $t \in (t_i,t_{i+1})$, we test~\eqref{mpslabel:eq:mps} with $\pphih =  \vvphi_h(t) \in \Vh$.
Then, integrating in time over $(t_i,t_{i+1})$ and summing over $i=0,\dots,J-1$,
we obtain that
\begin{equation} \label{mpslabel:eq:varform_before_limit}
\begin{split}
\int_0^{kJ} \inner[h]{\de_t \mmhk(t)}{\vvphi_h(t)} \dt
& = - \int_0^{kJ}  \inner[h]{ \mmhkbar(t) \times \Ph \heff(\mmhkbar(t))}{\vvphi_h(t)} \dt \\
& \quad + \alpha \int_0^{kJ} \inner[h]{\mmhkbar(t) \times \de_t \mmhk(t)}{\vvphi_h(t)} \dt.
\end{split}
\end{equation}
The argument in~\cite[Section~3]{bp2006} shows that
\begin{gather*}
\int_0^{kJ} \inner[h]{\de_t \mmhk(t)}{\vvphi_h(t)} \dt
\to \int_0^T \inner[\Omega]{\mmt(t)}{\vvphi(t)} \dt
\quad \text{as } h,k \to 0
\quad \text{and} \\
\int_0^{kJ} \inner[h]{\mmhkbar(t) \times \de_t \mmhk(t)}{\vvphi_h(t)} \dt
\to \int_0^T \inner[\Omega]{\mm(t) \times \mmt(t)}{\vvphi(t)} \dt
\quad \text{as } h,k \to 0.
\end{gather*}
For the first term on the right-hand side of~\eqref{mpslabel:eq:varform_before_limit} simple algebraic manipulations together with \eqref{mpslabel:eq:pseudo-projection} show that
\begin{align} \label{mpslabel:eq:convergenceTerm}
\notag\!\int_0^{kJ}\!\!  \inner[h]{ \mmhkbar(t) \!\times\! \Ph \heff(\mmhkbar(t))}{\vvphi_h(t)} \dt
& = \int_0^{kJ}\!\!  \dual{\heff(\mmhkbar(t))}{(\Interp\! -\! 1) [ \vvphi_h(t) \times \mmhkbar(t)]} \dt \\
&\quad + \int_0^{kJ}\!\! \dual{\heff(\mmhkbar(t))}{ \vvphi_h(t) \times \mmhkbar(t) } \dt.
\end{align}
Since
\begin{equation*}
\begin{split}
& \left\lvert \int_0^{kJ}  \dual{\heff(\mmhkbar(t))}{(\Interp - 1) [ \vvphi_h(t) \times \mmhkbar(t)]} \dt \right\rvert \\
& \quad \leq C \int_0^{kJ} ( \norm[\HH^1(\Omega)]{\mmhkbar(t)} + \norm[\LL^2(\Omega)]{\ff})
\norm[\HH^1(\Omega)]{(\Interp - 1) [ \vvphi_h(t) \times \mmhkbar(t)]} \dt
\end{split}
\end{equation*}
and
$\norm[L^{\infty}(0,T;\HH^1(\Omega))]{(\Interp - 1) [ \vvphi_h \times \mmhkbar]} \leq C h$
(see~\cite[equations~(39)--(40)]{prs2018}),
the first term on the right-hand side of~\eqref{mpslabel:eq:convergenceTerm} tends to $0$ as $h,k \to 0$.
Moreover, owing to the available convergence results and the convergence properties
of $a(\cdot,\cdot)$ and $\ppi(\cdot)$, it holds that
\begin{equation*}
\int_0^{kJ}  \dual{\heff(\mmhkbar(t))}{ \vvphi_h(t) \times \mmhkbar(t) } \dt
\to
\int_0^T  \dual{\heff(\mm(t))}{ \vvphi(t) \times \mm(t) } \dt
\quad
\text{as } h,k \to 0.
\end{equation*}
Hence, passing~\eqref{mpslabel:eq:varform_before_limit} to the limit as $h,k \to 0$, we obtain~\eqref{mpslabel:eq:weak:variational}
for any smooth test function $\vvphi$.
By density, we obtain the desired result.

Finally, the energy inequality~\eqref{mpslabel:eq:weak:energy} is obtained by passing to the limit
as $h,k \to 0$ the discrete energy identity~\eqref{mpslabel:eq:stability}
and using standard lower semicontinuity arguments
in combination with the available convergence results.
\end{proof}

\section{Analysis of the practical midpoint scheme: constraint preserving fixed-point iteration} \label{mpslabel:sec:proofs2}

To start with, we recall that for quasi-uniform families of triangulations we have the inverse estimate
\begin{subequations}\label{mpslabel:eq:total}
\begin{align}
\label{mpslabel:eq:inverse}
\norm[\LL^2(\Omega)]{\Grad\pphi_h}
\le \Cinv h^{-1} \norm[\LL^2(\Omega)]{\pphi_h}
\quad
\text{for all } \pphi_h \in \Vh,
\end{align}
from which it follows that
\begin{align} \label{mpslabel:eq:boundednessPh}
\norm[h]{\Ph\pphi}
\le (1 + \Cinv^2 h^{-2})^{1/2} \norm[\HH^1(\Omega)^{\star}]{\pphi}
\quad
\text{for all } \pphi \in \HH^1(\Omega)^{\star}.
\end{align}
Here, $\Cinv>0$ depends only on $\kappa$.
Moreover, the following inequalities are direct consequences of~\eqref{mpslabel:eq:a:continuity}--\eqref{mpslabel:eq:b:continuity}:
\begin{align}
\label{mpslabel:eq:heff:lipschitz}
\norm[\HH^1(\Omega)^{\star}]{\heffloc(\ppsi) - \heffloc(\vvphi)}
&\le C_1 \norm[\HH^1(\Omega)]{\ppsi - \vvphi}
&\text{for all } \ppsi, \vvphi \in \HH^1(\Omega), \\
\label{mpslabel:eq:heff:affine}
\norm[\HH^1(\Omega)^{\star}]{\heffloc(\ppsi)}
&\le C_1 \norm[\HH^1(\Omega)]{\ppsi} + \norm[\LL^2(\Omega)]{\ff}
&\text{for all } \ppsi \in \HH^1(\Omega), \\
\label{mpslabel:eq:heffnof:linear}
\norm[\HH^1(\Omega)^{\star}]{\heffloc(\ppsi) - \ff}
&\le C_1 \norm[\HH^1(\Omega)]{\ppsi}
&\text{for all } \ppsi \in \HH^1(\Omega).
\end{align}
\end{subequations}

\subsection{Well-posedness}

We now prove Proposition~\ref{mpslabel:prop:fp}, which establishes the properties of
the constraint preserving fixed-point iteration proposed in Section~\ref{mpslabel:sec:fp}.

\begin{proof}[Proof of Proposition~\ref{mpslabel:prop:fp}{\textrm{(i)}}]
Since the bilinear form on the left-hand side of~\eqref{mpslabel:eq:mps_eta_fp} is elliptic with respect to the norm $\norm[h]{\cdot}$,
the variational problem admits a unique solution $\eeta_h^{i,\ell+1} \in \Vh$ for each $\ell \ge 0$.

Let $\ell \in \N_0$ and let $\zz \in \Nh$ be an arbitrary node.
Testing~\eqref{mpslabel:eq:mps_eta_fp} with $\pphih = \eeta_h^{i,\ell+1}(\zz) \vphi_{\zz} \in \Vh$,
we obtain that
\begin{equation*}
\beta_{\zz} \abs{\eeta_h^{i,\ell+1}(\zz)}^2
= \beta_{\zz} \eeta_h^{i,\ell+1}(\zz) \cdot \mm_{h\eps}^i(\zz).
\end{equation*}
Hence, $\abs{\eeta_h^{i,\ell+1}(\zz)} \leq \abs{\mm_{h\eps}^i(\zz)} = 1$.
We conclude that $\norm[\LL^{\infty}(\Omega)]{\eeta_h^{i,\ell+1}} \le 1$.
\end{proof}

\begin{proof}[Proof of Proposition~\ref{mpslabel:prop:fp}{\textrm{(ii)}}]
Let $\ell \in \N_0$.
Subtracting the equations satisfied by two consecutive iterates $\eeta_h^{i,\ell+1}, \eeta_h^{i,\ell+2} \in \Vh$
in~\eqref{mpslabel:eq:mps_eta_fp},
we obtain that
\begin{equation*}
\begin{split}
&\inner[h]{\eeta_h^{i,\ell+2} - \eeta_h^{i,\ell+1}}{\pphih} \\
& \quad \stackrel{\eqref{mpslabel:eq:mps_eta_fp}}{=}
- \frac{k}{2} \inner[h]{\eeta_h^{i,\ell+2} \times \Ph\heffloc(\eeta_h^{i,\ell+1})}{\pphih}
+ \frac{k}{2} \inner[h]{\eeta_h^{i,\ell+1} \times \Ph\heffloc(\eeta_h^{i,\ell})}{\pphih} \\
& \qquad -  \frac{k}{2} \inner[h]{(\eeta_h^{i,\ell+2} - \eeta_h^{i,\ell+1}) \times \Ph\PPi_h(\mm_{h\eps}^i,\mm_{h\eps}^{i-1})}{\pphih}
- \alpha \inner[h]{(\eeta_h^{i,\ell+2} - \eeta_h^{i,\ell+1}) \times \mm_{h\eps}^i}{\pphih}.
\end{split}
\end{equation*}
Choosing $\pphih = \eeta_h^{i,\ell+2} - \eeta_h^{i,\ell+1} \in \Vh$, we obtain that
\begin{equation*}
\begin{split}
&\norm[h]{\eeta_h^{i,\ell+2} - \eeta_h^{i,\ell+1}}^2 \\
& \ = - \frac{k}{2} \inner[h]{\eeta_h^{i,\ell+2} \times \Ph\heffloc(\eeta_h^{i,\ell+1})}{\eeta_h^{i,\ell+2} - \eeta_h^{i,\ell+1}}
+ \frac{k}{2} \inner[h]{\eeta_h^{i,\ell+1} \times \Ph\heffloc(\eeta_h^{i,\ell})}{\eeta_h^{i,\ell+2} - \eeta_h^{i,\ell+1}} \\
& \ = - \frac{k}{2} \inner[h]{\eeta_h^{i,\ell+1} \times \Ph(\heffloc(\eeta_h^{i,\ell+1}) - \heffloc(\eeta_h^{i,\ell}))}{\eeta_h^{i,\ell+2} - \eeta_h^{i,\ell+1}},
\end{split}
\end{equation*}
where the second equality can be seen by adding and subtracting 
the quantity
\begin{equation*}
\frac{k}{2} \inner[h]{\eeta_h^{i,\ell+1} \times \Ph\heffloc(\eeta_h^{i,\ell+1})}{\eeta_h^{i,\ell+2} - \eeta_h^{i,\ell+1}}.
\end{equation*}
It follows that
\begin{equation*}
\begin{split}
& \norm[h]{\eeta_h^{i,\ell+2} - \eeta_h^{i,\ell+1}}^2 \\
& \quad =
- \frac{k}{2} \inner[h]{\eeta_h^{i,\ell+1} \times \Ph(\heffloc(\eeta_h^{i,\ell+1}) - \heffloc(\eeta_h^{i,\ell}))}{\eeta_h^{i,\ell+2} - \eeta_h^{i,\ell+1}} \\
& \quad \le
\frac{k}{2} \norm[\LL^{\infty}(\Omega)]{\eeta_h^{i,\ell+1}}
\norm[h]{ \Ph(\heffloc(\eeta_h^{i,\ell+1}) - \heffloc(\eeta_h^{i,\ell})) }
\norm[h]{ \eeta_h^{i,\ell+2} - \eeta_h^{i,\ell+1} } \\
& \quad \le
\frac{k (1 + \Cinv^2 h^{-2})^{1/2}}{2}
\norm[\HH^1(\Omega)^{\star}]{ \heffloc(\eeta_h^{i,\ell+1}) - \heffloc(\eeta_h^{i,\ell}) }
\norm[h]{ \eeta_h^{i,\ell+2} - \eeta_h^{i,\ell+1} }
\end{split}
\end{equation*}
where the last inequality follows from
$\norm[\LL^{\infty}(\Omega)]{\eeta_h^{i,\ell+1}} \le 1$
and~\eqref{mpslabel:eq:boundednessPh}.
Moreover, it holds that
\begin{equation*}
\begin{split}
\norm[h]{\eeta_h^{i,\ell+2} - \eeta_h^{i,\ell+1}}
& \stackrel{\phantom{\eqref{mpslabel:eq:heff:lipschitz}}}{\le}
\frac{k (1 + \Cinv^2 h^{-2})^{1/2}}{2}
\norm[\HH^1(\Omega)^{\star}]{ \heffloc(\eeta_h^{i,\ell+1}) - \heffloc(\eeta_h^{i,\ell}) } \\
& \stackrel{\eqref{mpslabel:eq:heff:lipschitz}}{\le}
\frac{C_1 k (1 + \Cinv^2 h^{-2})^{1/2}}{2}
\norm[\HH^1(\Omega)]{ \eeta_h^{i,\ell+1} - \eeta_h^{i,\ell} } \\
& \stackrel{\eqref{mpslabel:eq:inverse}}{\le}
\frac{C_1 k (1 + \Cinv^2 h^{-2})}{2}
\norm[h]{ \eeta_h^{i,\ell+1} - \eeta_h^{i,\ell} }.
\end{split}
\end{equation*}
Since $k = o(h^2)$ as $h,k \to 0$, there exist $h_0, k_0 >0$ and a constant $0 < q < 1$
for which~\eqref{mpslabel:eq:contraction} holds for all $h < h_0$ and $k < k_0$.
\end{proof}

\begin{proof}[Proof of Proposition~\ref{mpslabel:prop:fp}{\textrm{(iii)}}]
Let $\ell \in \N_0$.
Using~\eqref{mpslabel:eq:boundednessPh},
\eqref{mpslabel:eq:heff:lipschitz},
and~\eqref{mpslabel:eq:normEquivalence}
as well as the fact that $\norm[\LL^{\infty}(\Omega)]{\eeta_h^{i,\ell+1}} \le 1$, 
we obtain that
\begin{equation*}
\begin{split}
\norm[h]{\Interp[\eeta_h^{i,\ell+1} \times \Ph (\heffloc(\eeta_h^{i,\ell+1}) - \heffloc(\eeta_h^{i,\ell}))]}
& \leq
\norm[h]{\Ph (\heffloc(\eeta_h^{i,\ell+1}) - \heffloc(\eeta_h^{i,\ell}))} \\
& \stackrel{\phantom{\eqref{mpslabel:eq:contraction}}}{\le}
C_1 (1 + \Cinv^2 h^{-2}) \norm[h]{\eeta_h^{i,\ell+1} - \eeta_h^{i,\ell}} \\
& \stackrel{\eqref{mpslabel:eq:contraction}}{\le}
C_1 (1 + \Cinv^2 h^{-2}) q^{\ell} \norm[h]{\eeta_h^{i,1} - \eeta_h^{i,0}} \\
& \ \le
2 C_1 \abs{\Omega}^{1/2}(1 + \Cinv^2 h^{-2}) q^{\ell}.
\end{split}
\end{equation*}
Hence, $\norm[h]{\Ph (\heffloc(\eeta_h^{i,\ell+1}) - \heffloc(\eeta_h^{i,\ell}))} \leq \eps$
for all $\ell \in \N_0$
satisfying
\begin{equation*}
\ell \geq \frac{\log(2 C_1 \abs{\Omega}^{1/2}(1 + \Cinv^2 h^{-2}) / \eps)}{\log(1/q)}.
\end{equation*}

Since $\mm_{h\eps}^{i+1} := 2 \, \eeta_h^{i,\ell^*+1} - \mm_{h\eps}^i$, there holds $\mm_{h\eps}^{i+1/2} = \eeta_h^{i,\ell^*+1}$.
From~\eqref{mpslabel:eq:mps_eta_fp} it follows that $\mm_{h\eps}^{i+1}$ solves
\begin{equation*}
\begin{split}
\inner[h]{d_t \mm_{h\eps}^{i+1}}{\pphih}
& = - \inner[h]{ \mm_{h\eps}^{i + 1/2} \times \Ph (\heffloc(\eeta_h^{i, \ell^*}) + \PPi_h(\mm_{h\eps}^i,\mm_{h\eps}^{i-1}))}{\pphih} \\
& \qquad+ \alpha \inner[h]{\mm_{h\eps}^{i+1/2} \times d_t \mm_{h\eps}^{i+1}}{\pphih}
\end{split}
\end{equation*}
for all $\pphi_h \in \Vh$.
Testing with $\pphih = \mm_{h\eps}^{i+1/2}(\zz) \vphi_{\zz} \in \Vh$ then reveals that $\mm_{h\eps}^{i+1} \in \Mh$
(see the proof of Theorem~\ref{mpslabel:thm:main}{\textrm{(i)}}).
\end{proof}

\subsection{Stability and weak convergence}
\label{mpslabel:sec:proof:mps:fp:stability}

Next, we provide the proof of Theorem~\ref{mpslabel:thm:main:fp},
which establishes the stability and convergence of Algorithm~\ref{mpslabel:alg:mps_fp}.

\begin{proof}[Proof of Theorem~\ref{mpslabel:thm:main:fp}]
Part~{\textrm{(i)}} is a direct consequence of Proposition~\ref{mpslabel:prop:fp}.
The proof of part~{\textrm{(ii)}} follows the lines of the one of Theorem~\ref{mpslabel:thm:main}{\textrm{(ii)}}.

Let us now consider the proof of part~{\textrm{(iii)}}.
Testing~\eqref{mpslabel:eq:mps:fp} with $\pphi_h = \alpha d_t\mm_{h\eps}^{i+1} - \Ph\heff(\mm_{h\eps}^{i+1/2}) + \Ph[\ppi(\mm_{h\eps}^{i+1/2}) - \PPi_h(\mm_{h\eps}^i, \mm_{h\eps}^{i-1})]$ yields
\begin{equation*}
\begin{split}
&\alpha\norm[h]{d_t\mm_{h\eps}^{i+1}}^2 
- \inner[h]{d_t\mm_{h\eps}^{i+1}}{\Ph\heff(\mm_{h\eps}^{i+1/2})} 
+ \inner[h]{d_t\mm_{h\eps}^{i+1}}{\Ph[\ppi(\mm_{h\eps}^{i+1/2}) - \PPi_h(\mm_{h\eps}^i, \mm_{h\eps}^{i-1})]}
\\
&\quad = \inner[h]{\mm_{h\eps}^{i+1/2}\times \rr_{h\eps}^i}{\alpha d_t\mm_{h\eps}^{i+1} - \Ph\heff(\mm_{h\eps}^{i+1/2}) + \Ph[\ppi(\mm_{h\eps}^{i+1/2}) - \PPi_h(\mm_{h\eps}^i, \mm_{h\eps}^{i-1})]}.
\end{split}
\end{equation*}
Using~\eqref{mpslabel:eq:aux_eq} and rearranging the terms, we obtain that
\begin{equation*}
\begin{split}
& \E(\mm_{h\eps}^{i+1}) 
+ \alpha k\norm[h]{d_t\mm_{h\eps}^{i+1}}^2 
= \E(\mm_{h\eps}^i) 
- k\inner[h]{d_t\mm_{h\eps}^{i+1}}{\Ph[\ppi(\mm_{h\eps}^{i+1/2}) - \PPi_h(\mm_{h\eps}^i, \mm_{h\eps}^{i-1})]}
\\
&\quad + k\inner[h]{\mm_{h\eps}^{i+1/2}\times \rr_{h\eps}^i}{\alpha d_t\mm_{h\eps}^{i+1} - \Ph\heff(\mm_{h\eps}^{i+1/2}) + \Ph[\ppi(\mm_{h\eps}^{i+1/2}) - \PPi_h(\mm_{h\eps}^i, \mm_{h\eps}^{i-1})]}.
\end{split}
\end{equation*}
Let $1 \leq j \leq J$.
Summation over $i = 0, \dots, j-1$ leads to
\begin{equation*}
\begin{split}
&\E(\mm_{h\eps}^j) 
+ \alpha k\sum_{i=0}^{j-1}\norm[h]{d_t\mm_{h\eps}^{i+1}}^2 
= \E(\mm_h^0) 
- k\sum_{i=0}^{j-1}\inner[h]{d_t\mm_{h\eps}^{i+1}}{\Ph[\ppi(\mm_{h\eps}^{i+1/2}) - \PPi_h(\mm_{h\eps}^i, \mm_{h\eps}^{i-1})]}
\\
&\quad + k\sum_{i=0}^{j-1}\inner[h]{\mm_{h\eps}^{i+1/2}\times \rr_{h\eps}^i}{\alpha d_t\mm_{h\eps}^{i+1} - \Ph\heff(\mm_{h\eps}^{i+1/2}) + \Ph[\ppi(\mm_{h\eps}^{i+1/2}) - \PPi_h(\mm_{h\eps}^i, \mm_{h\eps}^{i-1})]}.
\end{split}
\end{equation*}
Applying the G{\aa}rding inequality~\eqref{mpslabel:eq:gaarding} and continuity~\eqref{mpslabel:eq:a:continuity}
and using the fact that $\norm[\LL^{\infty}(\Omega)]{\mm_{h\eps}^j} = \norm[\LL^{\infty}(\Omega)]{\mm_h^0} = 1$,
we obtain that
\begin{equation*}
\begin{split}
&C_2\norm[\HH^1(\Omega)]{\mm_{h\eps}^{j}}^2
+ 2\alpha k\sum_{i=0}^{j-1}\norm[h]{d_t\mm_{h\eps}^{i+1}}^2 \\
& \quad \le
(C_1 + \norm[L(\LL^2(\Omega), \LL^2(\Omega))]{\ppi})\norm[\HH^1(\Omega)]{\mm_h^0}^2 
+ 2 \norm[\LL^2(\Omega)]{\ff} \big( \norm[\LL^2(\Omega)]{\mm_{h\eps}^j} + \norm[\LL^2(\Omega)]{\mm_h^0} \big) \\
&\qquad + C_3\norm[\LL^2(\Omega)]{\mm_{h\eps}^j}^2
- 2k\sum_{i=0}^{j-1}\inner[h]{d_t\mm_{h\eps}^{i+1}}{\Ph[\ppi(\mm_{h\eps}^{i+1/2}) - \PPi_h(\mm_{h\eps}^i, \mm_{h\eps}^{i-1})]}
\\
&\quad\!\! + 2k\sum_{i=0}^{j-1}\inner[h]{\mm_{h\eps}^{i+1/2}\times \rr_{h\eps}^i}{\alpha d_t\mm_{h\eps}^{i+1} - \Ph\heff(\mm_{h\eps}^{i+1/2}) + \Ph[\ppi(\mm_{h\eps}^{i+1/2}) - \PPi_h(\mm_{h\eps}^i, \mm_{h\eps}^{i-1})]}.
\end{split}
\end{equation*}
Using Young's inequality, the first sum on the right-hand side can be estimated as
\begin{equation*}
\begin{split}
&- 2k\sum_{i=0}^{j-1}\inner[h]{d_t\mm_{h\eps}^{i+1}}{\Ph[\ppi(\mm_{h\eps}^{i+1/2}) - \PPi_h(\mm_{h\eps}^i, \mm_{h\eps}^{i-1})]}
\\
& \quad = - 2k\sum_{i=0}^{j-1}\inner{d_t\mm_{h\eps}^{i+1}}{\ppi(\mm_{h\eps}^{i+1/2}) - \PPi_h(\mm_{h\eps}^i, \mm_{h\eps}^{i-1})}
\\
& \quad \le \alpha k\sum_{i=0}^{j-1}\norm[\LL^2(\Omega)]{d_t\mm_{h\eps}^{i+1}}^2
+ \frac{k}{\alpha} \sum_{i=0}^{j-1}\norm[\LL^2(\Omega)]{\ppi(\mm_{h\eps}^{i+1/2}) - \PPi_h(\mm_{h\eps}^i, \mm_{h\eps}^{i-1})}^2.
\end{split}
\end{equation*}
Since $\mm_{h\eps}^i \in \Mh$ for all $i=0, \dots, j-1$ it holds that
\begin{equation*}
\norm[\LL^2(\Omega)]{\ppi(\mm_{h\eps}^{i+1/2}) - \PPi_h(\mm_{h\eps}^i,\mm_{h\eps}^{i-1})}
\le \big(\norm[L(\LL^2(\Omega); \LL^2(\Omega))]{\ppi} + 2\Cpi\big) |\Omega|^{1/2}\,,
\end{equation*}
and hence
\begin{equation*}
\frac{k}{\alpha} \sum_{i=0}^{j-1}\norm[\LL^2(\Omega)]{\ppi(\mm_{h\eps}^{i+1/2}) - \PPi_h(\mm_{h\eps}^i, \mm_{h\eps}^{i-1})}^2
\leq \frac{C \abs{\Omega} (T + k_0)}{\alpha},
\end{equation*}
where $C>0$ depends only on $\ppi$ and $\Cpi$.
Hence,
using the norm equivalence~\eqref{mpslabel:eq:normEquivalence}, we obtain the estimate
\begin{equation*}
\begin{split}
&- 2k\sum_{i=0}^{j-1}\inner[h]{d_t\mm_{h\eps}^{i+1}}{\Ph[\ppi(\mm_{h\eps}^{i+1/2}) - \PPi_h(\mm_{h\eps}^i, \mm_{h\eps}^{i-1})]}
\\
& \quad \le \alpha k\sum_{i=0}^{j-1}\norm[h]{d_t\mm_{h\eps}^{i+1}}^2
+ \frac{C \abs{\Omega} (T + k_0)}{\alpha}.
\end{split}
\end{equation*}

Using the estimates
\begin{equation*}
\begin{split}
\norm[h]{\Ph \heff(\mm_{h\eps}^{i + 1/2})}
&
\stackrel{\eqref{mpslabel:eq:boundednessPh}}{\leq}
(1 + \Cinv^2 h^{-2})^{1/2}
\norm[\HH^{1}(\Omega)^\star]{\heff(\mm_{h\eps}^{i + 1/2})}
\\ &
\stackrel{\eqref{mpslabel:eq:heff:affine}}{\leq}
\big(1 + \Cinv^2 h^{-2})^{1/2} (C_1\norm[\HH^1(\Omega)]{\mm_{h\eps}^{i + 1/2}} + \norm[\LL^2(\Omega)]{\ff} \big),
\end{split}
\end{equation*}
\begin{equation*}
\begin{split}
\sum_{i=0}^{j-1} \norm[\HH^1(\Omega)]{\mm_{h\eps}^{i + 1/2}}
& 
\le
\frac{1}{2} \sum_{i=0}^{j-1} (\norm[\HH^1(\Omega)]{\mm_{h\eps}^{i + 1}} + \norm[\HH^1(\Omega)]{\mm_{h\eps}^i})
\\ &
=
\frac{1}{2} \norm[\HH^1(\Omega)]{\mm_h^0}
+ \sum_{i=1}^{j-1} \norm[\HH^1(\Omega)]{\mm_{h\eps}^i}
+ \frac{1}{2} \norm[\HH^1(\Omega)]{\mm_{h\eps}^j}
\\ &
\le \frac{j}{2}
+ \frac{1}{2} \norm[\HH^1(\Omega)]{\mm_{h\eps}^j}^2
+ \frac{1}{2} \sum_{i=0}^{j-1} \norm[\HH^1(\Omega)]{\mm_{h\eps}^i}^2,
\end{split}
\end{equation*}
and
\begin{equation*}
\norm[\LL^2(\Omega)]{d_t\mm_{h\eps}^{i+1}}
\leq
\frac{1}{4}
+ \norm[\LL^2(\Omega)]{d_t\mm_{h\eps}^{i+1}}^2,
\end{equation*}
together with the stopping criterion $\norm[h]{\Interp[\mm_{h\eps}^{i+1/2}\times\rr_{h\eps}^i]}\le \eps$ of Algorithm~\ref{mpslabel:alg:mps_fp},
if $h$ is sufficiently small,
we obtain that
\begin{equation*}
\begin{split}
&2k\sum_{i=0}^{j-1}\inner[h]{\mm_{h\eps}^{i+1/2}\times \rr_{h\eps}^i}{\alpha d_t\mm_{h\eps}^{i+1} - \Ph\heff(\mm_{h\eps}^{i+1/2}) + \Ph[\ppi(\mm_{h\eps}^{i+1/2}) - \PPi_h(\mm_{h\eps}^i, \mm_{h\eps}^{i-1})]} \\
&\quad
\le C' \eps (1 + h^{-1} )
+ 2k\eps\alpha\sum_{i=0}^{j-1} \norm[h]{d_t\mm_{h\eps}^{i+1}}^2 \\
& \qquad
+ C_1 k\eps \big(1 + \Cinv^2 h^{-2})^{1/2} \norm[\HH^1(\Omega)]{\mm_{h\eps}^j}^2
+ C_1 k\eps \big(1 + \Cinv^2 h^{-2})^{1/2} \sum_{i=0}^{j-1} \norm[\HH^1(\Omega)]{\mm_{h\eps}^i}^2,
\end{split}
\end{equation*}
where the constant $C' > 0$ depends only on $T$,
$\abs{\Omega}$,
$\ff$,
$\kappa$,
$\ppi$,
and $\Cpi$.
Altogether,
exploiting the assumption $\eps = \OO(h)$ as $h,\eps \to 0$,
there exist thresholds $0<h_0^* \leq h_0$, $0<k_0^* \leq k_0$,
and $\eps_0^*>0$ as well as constants $A, B > 0$
(depending only on $\kappa$, $T$, and the problem data)
such that
\begin{equation*}
\norm[\HH^1(\Omega)]{\mm_{h\eps}^j}^2
+ k \sum_{i=0}^{j-1} \norm[h]{d_t \mm_{h\eps}^i}^2
\le
A + B k \sum_{i=0}^{j-1} \norm[\HH^1(\Omega)]{\mm_{h\eps}^i}^2
\end{equation*}
for all $h<h_0^*$, $k<k_0^*$, and $\eps < \eps_0^*$.
Then, the discrete Gronwall lemma (see, e.g., \cite[Lemma~10.5]{thomee2006})
and the norm equivalence~\eqref{mpslabel:eq:normEquivalence}
yield~\eqref{mpslabel:eq:stability:fp}. 
This concludes the proof of part~{\textrm{(iii)}}.

The proof of part~{\textrm{(iv)}} follows the lines of~\cite{bp2006,bartels2006,cimrak2009,prs2018};
see also the proof of Theorem~\ref{mpslabel:thm:main}{\textrm{(iii)}}.
In particular, \eqref{mpslabel:eq:weak:variational} and~\eqref{mpslabel:eq:weak:energy}
are obtained by passing to the limit as $h,k,\eps \to 0$
the discrete identities~\eqref{mpslabel:eq:mps:fp} and~\eqref{mpslabel:eq:energylaw:fp}, respectively,
where the additional contributions arising from the linearization of the nonlinear system
(resp., from the explicit treatment of $\ppi$),
which do not appear in the proof of Theorem~\ref{mpslabel:thm:main}{\textrm{(iii)}},
vanish in the limit, because they are bounded by $\eps$
(resp., because $\ppi_h$ is assumed to be consistent with $\ppi$).
\end{proof}

\section{Analysis of the practical midpoint scheme: Newton iteration} \label{mpslabel:sec:proof:newton}
\subsection[Stability and weak convergence]{Stability of Algorithm~\ref{mpslabel:alg:mps_newton}}\label{mpslabel:sec:newton:proof}
Lemma~\ref{mpslabel:le:newton:Linfty} and Theorem~\ref{mpslabel:thm:main:newton} assume well-posedness of Algorithm~\ref{mpslabel:alg:mps_newton} up to time-step $i < J$ , i.e., that for all $n = 0, \dots, i$ the Newton solver~\eqref{mpslabel:eq:mps_eta_newton} returns after finitely many iterations the solutions $\mm_{h\eps}^{n+1}, \rr_{h\eps}^n \in \Vh$ such that~\eqref{mpslabel:eq:mps:newton} holds with $\norm[h]{\rr_{h\eps}^n} \le \eps$.
Later, in Sections~\ref{mpslabel:sec:newton:proof:lipschitz}--\ref{mpslabel:sec:newton:proof:ellstar} Theorem~\ref{mpslabel:thm:main:newton:convergence} is proved, guaranteeing that, given appropriate CFL-conditions, this well-posedness assumption is always satisfied.
\subsubsection{Boundedness of magnetization length, Lemma~\ref{mpslabel:le:newton:Linfty}\textrm{(i)--(ii)}}\label{mpslabel:sec:newton:proof:linfty}
For $0 \le n \le i$ and $\zz\in\NN_h$, testing \eqref{mpslabel:eq:mps:newton} with $\pphi_h = \mm_{h\eps}^{n+1/2}(\zz)\varphi_{\zz} \in \Vh$ yields
\begin{equation*}
\begin{split}
\frac{1}{2k}\beta_{\zz}\left(|\mm_{h\eps}^{n+1}(\zz)|^2 - |\mm_{h\eps}^n(\zz)|^2\right) 
&= \inner[h]{\rr_{h\eps}^n}{\mm_{h\eps}^{n+1/2}(\zz)\varphi_{\zz}}
\le \norm[h]{\rr_{h\eps}^n}\norm[h]{\mm_{h\eps}^{n+1/2}(\zz)\varphi_{\zz}} \\
&\le \eps\beta_{\zz}^{1/2}|\mm_{h\eps}^{n+1/2}(\zz)| \\
&\le \eps\beta_{\zz}^{1/2}\left(\frac{1}{2}|\mm_{h\eps}^{n}(\zz)|^2 + \frac{1}{2}|\mm_{h\eps}^{n+1}(\zz)|^2 + \frac{1}{4}\right)\,.
\end{split}
\end{equation*}
Rearranging the terms and using $\eps\beta_{\zz}^{-1/2} \le C_1\eps h^{-3/2} =: C_{h\eps}$ uniformly for all $\zz \in \NN_h$, shows that for $k < 1 / (2C_{h\eps})$ it holds that
\begin{equation*}
\begin{split}
|\mm_{h\eps}^{n+1}(\zz)|^2 
&\le \frac{1 + C_{h\eps} k}{1 - C_{h\eps} k}|\mm_{h\eps}^n(\zz)|^2 + C_{h\eps} k \\
&= \left(1 + \frac{2C_{h\eps} k}{1 - C_{h\eps} k}\right)|\mm_{h\eps}^n(\zz)|^2 + C_{h\eps} k
\le (1 + 4C_{h\eps} k)|\mm_{h\eps}^n(\zz)|^2 + \frac{C_{h\eps} k}{2} \,.
\end{split}
\end{equation*}
Using $n \le i < J = T/k$ implies
\begin{equation*}
\begin{split}
|\mm_{h\eps}^{n+1}(\zz)|^2 
&\le (1+4C_{h\eps} k)^{n+1}|\mm_{h}^0(\zz)|^2 + C_{h\eps} k\sum_{p=0}^{n}(1 + 4C_{h\eps} k)^{p} \\
&\le \exp(4C_{h\eps} T)\left(|\mm_{h}^0(\zz)|^2 + C_{h\eps} T\right)\,.
\end{split}
\end{equation*}
Using $\mm_{h}^0 \in \Mh$ and uniform boundedness of $C_{h\eps}$ due to $\eps = \mathcal{O}(h^{3/2})$ concludes the proof of~{\textrm{(i)}}.
Analogously to the estimate above on $|\mm_{h\eps}^{n+1}(\zz)|^2$, starting from
\begin{align*}
\frac{1}{2k}\beta_{\zz}\left(|\mm_{h\eps}^{n+1}(\zz)|^2 - |\mm_{h\eps}^n(\zz)|^2\right) 
= \inner[h]{\rr_{h\eps}^n}{\mm_{h\eps}^{n+1/2}(\zz)\varphi_{\zz}}
\ge -\norm[h]{\rr_{h\eps}^n}\norm[h]{\mm_{h\eps}^{n+1/2}(\zz)\varphi_{\zz}}\,,
\end{align*}
by a similar computation one derives an estimate below via
\begin{align*}
|\mm_{h\eps}^{n+1}(\zz)|^2 
&\ge (1-4C_{h\eps} k)^{n+1}|\mm_{h}^0(\zz)|^2 - C_{h\eps} k\sum_{p=0}^{n}(1 - 4C_{h\eps} k)^{p} \\
&\ge \exp(-8C_{h\eps} T)|\mm_{h}^0(\zz)|^2 - \exp(-4C_{h\eps} T)C_{h\eps} T
\qquad\text{for all } 0 < k < k_0,
\end{align*}
where $k_0$ can be uniformly chosen since $\eps = \mathcal{O}(h^{3/2})$.
If $\eps = o(h^{3/2})$, then in both estimates $C_{h\eps}$ tends to zero as $h, \eps \to 0$.
Hence, also statement~{\textrm{(ii)}} holds true. \qed

\subsubsection{Stability and weak convergence, Theorem~\ref{mpslabel:thm:main:newton}\textrm{(i)--(iii)}}\label{mpslabel:sec:newton:proof:stability}
For $0 \le i < J$ testing~\eqref{mpslabel:eq:mps:newton} with $\pphi_h = \alpha d_t\mm_{h\eps}^{i+1} - \Ph\heff(\mm_{h\eps}^{i+1/2}) + \Ph(\ppi(\mm_{h\eps}^{i+1/2}) - \PPi_h(\mm_{h\eps}^i, \mm_{h\eps}^{i-1}))$ yields
\begin{equation*}
\begin{split}
&\alpha\norm[h]{d_t\mm_{h\eps}^{i+1}}^2 
- \inner[h]{d_t\mm_{h\eps}^{i+1}}{\Ph\heff(\mm_{h\eps}^{i+1/2})} 
+ \inner[h]{d_t\mm_{h\eps}^{i+1}}{\Ph(\ppi(\mm_{h\eps}^{i+1/2}) - \PPi_h(\mm_{h\eps}^i, \mm_{h\eps}^{i-1}))}
\\
&\quad = \inner[h]{\rr_{h\eps}^i}{\alpha d_t\mm_{h\eps}^{i+1} - \Ph\heff(\mm_{h\eps}^{i+1/2}) + \Ph(\ppi(\mm_{h\eps}^{i+1/2}) - \PPi_h(\mm_{h\eps}^i, \mm_{h\eps}^{i-1}))}.
\end{split}
\end{equation*}
Up to replacing $\rr_{h\eps}^i$ by $\mm_{h\eps}^{i+1/2} \times \rr_{h\eps}^i$, this identity resembles the first identity in Section~\ref{mpslabel:sec:proof:mps:fp:stability}, where Theorem~\ref{mpslabel:thm:main:fp}\textrm{(ii)--(iv)} is proved.
Hence, using $\LL^\infty(\Omega)$-boundedness of the iterates from Lemma~\ref{mpslabel:le:newton:Linfty}\textrm{(i)} and that the stopping criterion~\eqref{mpslabel:eq:stopping_newton} guarantees $\norm[h]{\rr_{h\eps}^i} \le \eps$, the proof of Theorem~\ref{mpslabel:thm:main:newton}{\textrm{(i)--(iii)}} directly follows the lines of Section~\ref{mpslabel:sec:proof:mps:fp:stability}. \qed

\subsection{Main theorem on Newton's method}
\label{mpslabel:sec:newton:main_thm}
Newton's method is an iterative scheme to generate a converging sequence of approximate solutions to the following problem:
Given $\FF\colon \R^n \to \R^n$,
\begin{equation}\label{mpslabel:eq:find_zero}
\text{find } \xx^\ast \in \R^n, \text{ such that } \FF(\xx^\ast) = \0.
\end{equation}
Here, $\FF$ is considered to be $C^1$-continuous on a convex open set $D \subseteq \R^n$ containing $\xx^\ast$ and the Jacobian of $\FF$ evaluated at $\xx \in \R^n$ is denoted by $\Grad\FF(\xx) \in \R^{n\times n}$.
Given a starting value $\xx^0 \in \R^n$, Newton's method applied to~\eqref{mpslabel:eq:find_zero} iterates for all $\ell \in \N_0$
\begin{equation}\label{mpslabel:eq:newton:method}
\begin{split}
 \textrm{solve } \quad& \Grad\FF(\xx^{\ell})\ddelta\xx^{\ell} = -\FF(\xx^{\ell})\,, \\
 \textrm{set } \quad& \xx^{\ell+1} = \xx^{\ell} + \ddelta\xx^{\ell}\,.
\end{split}
\end{equation}
Given a vector norm $\norm{\cdot}$ on $\R^n$, by $B(\norm{\cdot}; \xx, R)$ the open unit ball of radius $R > 0$ around $\xx \in \R^n$ with respect to the norm $\norm{\cdot}$ is denoted.
In accordance with \cite[Definition~1.20]{qss2007} a matrix norm $\norm[\R^{n\times n}]{\cdot}$ and a vector norm $\norm[\R^n]{\cdot}$ are called \emph{consistent}, if it holds that $\norm[\R^n]{\Amat\xx} \le \norm[\R^{n\times n}]{\Amat} \norm[\R^n]{\xx}$ for all $\Amat\in\R^{n\times n}$ and all $\xx\in\R^n$.
Clearly, any vector norm is consistent with the natural matrix norm induced by the vector norm defined as
\begin{equation}\label{mpslabel:eq:induced:norm}
\norm{\Amat} = \sup_{\xx\in\R^n\setminus\{\0\}} \frac{\norm{\Amat\xx}}{\norm{\xx}} \quad \text{for all } \Amat \in \R^{n\times n}\,.
\end{equation}
Using the above notation, we recall the classical local convergence result for Newton's method.

\begin{theorem}[{\cite[Theorem~7.1]{qss2007}}]\label{mpslabel:thm:newton_local}
For a convex open set $D \subseteq \R^n$ with $\xx^\ast \in D$, let $\FF \in C^1(D; \R^n)$ with $\FF(\xx^\ast) = \0$.
Suppose that $(\Grad\FF(\xx^\ast))^{-1} \in \R^{n\times n}$ exists and that there exist constants $C, R, L > 0$, such that 
\begin{subequations}
\begin{equation}\label{mpslabel:eq:newton_local:invertible}
\norm{(\Grad\FF(\xx^\ast))^{-1}} \le C\,,
\end{equation}
and
\begin{equation}\label{mpslabel:eq:newton_local:lipschitz}
\norm{\Grad\FF(\xx) - \Grad\FF(\yy)} \le L\norm{\xx - \yy} \quad\text{for all}\quad \xx, \yy \in B(\norm{\cdot}; \xx^\ast, R),
\end{equation}
\end{subequations}
where the symbol $\norm{\cdot}$ denotes two consistent vector and matrix norms.
Then, there holds that for any $\xx^{0} \in B(\norm{\cdot}; \xx^\ast, \min\{R, 1/(2CL)\})$, the sequence $(\xx^\ell)_{\ell\in\N}$ generated by Newton's method~\eqref{mpslabel:eq:newton:method}
is uniquely defined and converges to $\xx^\ast$ with
\begin{equation}\label{mpslabel:eq:newton:quadratic_convergence}
\norm{\xx^{\ell+1} - \xx^\ast} 
\le CL\norm{\xx^{\ell} - \xx^\ast}^2 \,.
\end{equation}
\end{theorem}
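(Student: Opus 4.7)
The plan is to combine a perturbation argument for the invertibility of $\Grad\FF(\xx^\ell)$ with the classical integral-remainder identity for $\FF$, and then close the induction on the fact that the iterates stay in the ball of radius $\rho := \min\{R, 1/(2CL)\}$ around $\xx^\ast$. I would set $\rho$ once at the beginning and prove, by induction on $\ell \in \N_0$, the two statements $\xx^\ell \in B(\norm{\cdot}; \xx^\ast, \rho)$ and $\norm{\xx^{\ell+1} - \xx^\ast} \le CL \, \norm{\xx^\ell - \xx^\ast}^2$ simultaneously.

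First, I would establish invertibility of $\Grad\FF(\xx^\ell)$ for any $\xx^\ell \in B(\norm{\cdot}; \xx^\ast, \rho)$. Writing $\Grad\FF(\xx^\ell) = \Grad\FF(\xx^\ast)\bigl[\Id - \Amat_\ell\bigr]$ with $\Amat_\ell := (\Grad\FF(\xx^\ast))^{-1}\bigl(\Grad\FF(\xx^\ast) - \Grad\FF(\xx^\ell)\bigr)$, assumptions~\eqref{mpslabel:eq:newton_local:invertible} and~\eqref{mpslabel:eq:newton_local:lipschitz} together with $\xx^\ell \in B(\norm{\cdot}; \xx^\ast, \rho)$ give $\norm{\Amat_\ell} \le CL \, \norm{\xx^\ell - \xx^\ast} \le 1/2$. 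A Neumann series argument then yields invertibility of $\Grad\FF(\xx^\ell)$ together with the uniform bound $\norm{(\Grad\FF(\xx^\ell))^{-1}} \le 2C$. In particular, the next Newton iterate $\xx^{\ell+1}$ is well defined.

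Next, I would derive the quadratic contraction. Using $\FF(\xx^\ast) = \0$ together with the identity $\xx^{\ell+1} - \xx^\ast = (\Grad\FF(\xx^\ell))^{-1}\bigl[\Grad\FF(\xx^\ell)(\xx^\ell - \xx^\ast) - \FF(\xx^\ell) + \FF(\xx^\ast)\bigr]$ and the fundamental theorem of calculus applied to $t \mapsto \FF(\xx^\ell + t(\xx^\ast - \xx^\ell))$, one arrives at
\begin{equation*}
\xx^{\ell+1} - \xx^\ast = (\Grad\FF(\xx^\ell))^{-1} \int_0^1 \bigl[\Grad\FF(\xx^\ell) - \Grad\FF(\xx^\ell + t(\xx^\ast - \xx^\ell))\bigr] (\xx^\ell - \xx^\ast) \, \mathrm{d}t.
\end{equation*}
The convexity of $D$ ensures that the segment stays in $D$, so assumption~\eqref{mpslabel:eq:newton_local:lipschitz} applies pointwise under the integral with constant $Lt$. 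Taking norms, using consistency and the bound $\norm{(\Grad\FF(\xx^\ell))^{-1}} \le 2C$ from the previous step, and integrating $\int_0^1 t \, \mathrm{d}t = 1/2$ gives exactly~\eqref{mpslabel:eq:newton:quadratic_convergence}.

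Finally, I would close the induction: since $\norm{\xx^\ell - \xx^\ast} \le \rho \le 1/(2CL)$, the quadratic estimate implies $\norm{\xx^{\ell+1} - \xx^\ast} \le (1/2) \, \norm{\xx^\ell - \xx^\ast} \le \rho$, so $\xx^{\ell+1}$ again lies in the admissible ball and the argument repeats. The contraction factor $1/2$ also gives convergence $\xx^\ell \to \xx^\ast$ as $\ell \to \infty$. The main subtlety is being careful that $\rho$ is chosen strictly so that both the perturbation step (which needs $CL\rho \le 1/2$) and the self-mapping property (which needs the resulting contraction to keep the iterates in $B(\norm{\cdot}; \xx^\ast, R)$ so that the Lipschitz hypothesis remains applicable) hold simultaneously; everything else is bookkeeping with the consistent norm pair.
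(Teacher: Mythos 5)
The paper states this theorem as a result imported from \cite[Theorem~7.1]{qss2007} and provides no proof of its own, so there is no in-paper argument to compare against. Your proof is correct and is the standard textbook argument for local quadratic convergence of Newton's method: you establish invertibility of $\Grad\FF(\xx^\ell)$ with $\norm{(\Grad\FF(\xx^\ell))^{-1}} \le 2C$ via the Banach perturbation lemma and assumptions~\eqref{mpslabel:eq:newton_local:invertible}--\eqref{mpslabel:eq:newton_local:lipschitz}, rewrite $\xx^{\ell+1} - \xx^\ast$ through the integral form of the Taylor remainder, apply the Lipschitz bound pointwise under the integral to produce exactly the constant $CL$ in~\eqref{mpslabel:eq:newton:quadratic_convergence}, and close the induction by noting that $CL\,\rho \le 1/2$ keeps the iterates in $B(\norm{\cdot};\xx^\ast,\rho)$. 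Two small points are worth making explicit: the segment from $\xx^\ell$ to $\xx^\ast$ lies in the ball of radius $\rho$ (hence in $B(\norm{\cdot};\xx^\ast,R)\subseteq D$) simply by convexity of the ball with center $\xx^\ast$, which is what legitimizes both the fundamental theorem of calculus and the Lipschitz hypothesis along the segment; and the Neumann-series bound $\norm{[\Id-\Amat_\ell]^{-1}}\le (1-\norm{\Amat_\ell})^{-1}$ tacitly uses submultiplicativity of the matrix norm, which holds for the induced norm~\eqref{mpslabel:eq:induced:norm} that the paper actually deploys in Section~\ref{mpslabel:sec:proof:newton:well_posedness}; alternatively one can bypass this by estimating $\norm{\Amat_\ell^k\ww} \le \norm{\Amat_\ell}^k\norm{\ww}$ on vectors via iterated consistency.
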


\begin{remark}\label{mpslabel:re:newton:error_decay}
In particular~\eqref{mpslabel:eq:newton:quadratic_convergence} and $\xx^{0} \in B(\norm{\cdot}; \xx^\ast, \min\{R, 1/(2CL)\})$ imply
\begin{equation}\label{mpslabel:eq:newton:exponential_decay}
\norm{\xx^{\ell} - \xx^\ast} 
\le \left(\prod_{j=0}^{\ell-1}(CL)^{2^j}\right)\norm{\xx^{0} - \xx^\ast}^{2^{\ell}} 
= (CL)^{2^{\ell} - 1} \norm{\xx^0 - \xx^\ast}^{2^\ell}
\le \frac{2\norm{\xx^0 - \xx^\ast}}{2^{2^\ell}}\,,
\end{equation}
for all $\ell \in \N_0$.
Hence, there holds $\norm{\xx^\ell - \xx^\ast}, \norm{\ddelta\xx^\ell} \to 0$ for $\ell \to \infty$.
\end{remark}

\subsection{Newton's method applied to the nonlinear midpoint scheme}
\label{mpslabel:sec:apply_newton_to_mps}
We aim to apply Newton's method~\eqref{mpslabel:eq:newton:method} to the nonlinear system of equations~\eqref{mpslabel:eq:mps_eta1_imex}, i.e., to the IMEX version of the ideal midpoint scheme where the lower order terms are integrated explicitly in time $\ppi(\mm_h^{i+1/2}) \approx \PPi_h(\mm_h^i, \mm_h^{i-1})$.
Consider a numbering of the nodes $\{\zz_j\colon j=1, \dots, N\} = \NN_h$ of the mesh $\T_h$, and associate with a given vector $\xx \in (\R^3)^{N}$ the finite element function defined by $\widehat{\xx} := \sum_{j=1}^N \xx_j\varphi_{\zz_j} \in \Vh$.
Further, for a finite element function $\uu = \sum_{j=1}^N \uu(\zz_j)\varphi_{\zz_j} \in \Vh$, we write $[\uu] \in (\R^{3})^N \simeq \R^{3N}$ for the vector of nodal values, i.e., $[\uu]_j := \uu(\zz_j) \in \R^3$.

The mass lumped scalar product $\inner[h]{\cdot}{\cdot}$ gives rise to the matrix $\matrixM_h\in(\R^{3\times 3})^{N\times N} \simeq \R^{3N\times 3N}$, defined via $(\matrixM_h)_{jk} := \delta_{j, k}\beta_{\zz_j}\II_{3\times 3} \in \R^{3\times 3}$.
Given $\mm_{h}^i\in\Vh$, the solution $\mm_{h}^{i+1/2}$ of~\eqref{mpslabel:eq:mps_eta1_imex} satisfies $\FF([\mm_{h}^{i+1/2}]) = \0$, with
\begin{equation}\label{mpslabel:eq:mps:newton:F}
\FF(\xx) := \matrixM_h\Big(\xx - [\mm_{h}^i] + \big[\Interp\big(\frac{k}{2}\widehat{\xx}\times\Ph\big(\heffloc(\widehat{\xx}) + \PPi_h(\mm_{h}^i, \mm_{h}^{i-1})\big) + \alpha\widehat{\xx}\times\mm_{h}^i\big)\big]\Big) \,.
\end{equation}
The Jacobian $\Grad\FF \colon \R^{3N} \to \R^{3N \times 3N}$ satisfies for all $\xx, \uu, \vv \in \R^{3N}$ that
\begin{equation}\label{mpslabel:eq:mps:newton:jacobian}
\begin{split}
\uu^\top \Grad\FF(\xx) \vv 
&= \inner[h]{\widehat{\uu}}{\widehat{\vv}}
+ \frac{k}{2}\inner[h]{\widehat{\uu}\times\Ph\heffloc(\widehat{\xx})}{\widehat{\vv}}
+ \frac{k}{2}\inner[h]{\widehat{\xx}\times\Ph\big(\heffloc(\widehat{\uu}) - \ff\big)}{\widehat{\vv}} \\
&\quad + \frac{k}{2}\inner[h]{\widehat{\uu}\times\Ph\PPi_h(\mm_{h}^i, \mm_{h}^{i-1})}{\widehat{\vv}}
+ \alpha\inner[h]{\widehat{\uu}\times\mm_{h}^i}{\widehat{\vv}}\,.
\end{split}
\end{equation}
Newton's method~\eqref{mpslabel:eq:newton:method} applied to the system~\eqref{mpslabel:eq:mps_eta1_imex} in the $i$th time-step now can be written as:
Given $\mm_{h\eps}^i \in \Vh$ and initial value $\eeta_h^{i,0} \in \Vh$, for all $\ell \in \N_0$ compute $\uu_h^{i,\ell} \in \Vh$ such that, for all $\pphi_h \in \Vh$, it holds that
\begin{subequations}
\begin{align} \label{mpslabel:eq:mps_eta_newton:in_proof}
\notag & \inner[h]{\uu_h^{i,\ell}}{\pphi_h}
+ \frac{k}{2} \inner[h]{ \uu_h^{i,\ell} \times \Ph\heffloc(\eeta_h^{i,\ell})}{\pphi_h}
+ \frac{k}{2} \inner[h]{\eeta_h^{i,\ell} \times \Ph\big(\heffloc(\uu_h^{i,\ell}) - \ff\big)}{\pphi_h} \\
\notag &\quad + \frac{k}{2}\inner[h]{\uu_h^{i,\ell} \times \Ph\PPi_h(\mm_{h\eps}^i, \mm_{h\eps}^{i-1})}{\pphi_h}
+ \alpha \inner[h]{\uu_h^{i,\ell} \times \mm_{h\eps}^i}{\pphi_h}\\
& = \inner[h]{\mm_{h\eps}^i - \eeta_h^{i,\ell}}{\pphi_h}
- \frac{k}{2} \inner[h]{ \eeta_h^{i,\ell} \times \Ph\heffloc(\eeta_h^{i,\ell})}{\pphi_h} \\
\notag &\quad - \frac{k}{2} \inner[h]{\eeta_h^{i,\ell} \times \Ph\PPi_h(\mm_{h\eps}^i, \mm_{h\eps}^{i-1})}{\pphi_h}
- \alpha \inner[h]{\eeta_h^{i,\ell} \times \mm_{h\eps}^i}{\pphi_h}\,,
\end{align}
and define
\begin{equation} \label{mpslabel:eq:mps_eta_newton:in_proof:update}
\eeta_h^{i,\ell+1} := \eeta_h^{i,\ell} + \uu_h^{i,\ell}\,.
\end{equation}
\end{subequations}
In the remainder of this section, to improve readability we omit the $h$-subscript of the iteration variables $\eeta^{i,\ell}$ and $\uu^{i,\ell}$.
Note that by~\eqref{mpslabel:eq:mps:newton:F}--\eqref{mpslabel:eq:mps:newton:jacobian} we see that~\eqref{mpslabel:eq:mps_eta_newton:in_proof}--\eqref{mpslabel:eq:mps_eta_newton:in_proof:update} resembles Newton's method~\eqref{mpslabel:eq:newton:method} with $\xx^\ell = [\eeta^{i, \ell}]$ and $\ddelta\xx^\ell = [\uu^{i, \ell}]$.
Given some tolerance $\eps > 0$, the iteration will be stopped once
\begin{equation} \label{mpslabel:eq:stopping_newton:in_proof}
\norm[h]{\Interp\big( \uu^{i,\ell} \times \Ph\big(\heffloc(\uu^{i,\ell}) - \ff\big)\big)}
\le \eps.
\end{equation}

If $\ell^* \in \N_0$ is the first index for which the stopping criterion~\eqref{mpslabel:eq:stopping_newton:in_proof} is satisfied,
the approximate magnetization at the new time-step is defined as
$\mm_{h\eps}^{i+1} := 2 \, \eeta^{i,\ell^*+1} - \mm_{h\eps}^i$.

For all $i \in \N_0$, let $\rr_{h\eps}^i := \Interp\big(\uu^{i,\ell^*} \times \Ph\big(\heffloc(\uu^{i,\ell^*}) - \ff\big)\big) \in \Vh$, so that $\inner[h]{\rr_{h\eps}^i}{\pphi_h}$ equals the difference of~\eqref{mpslabel:eq:mps_eta1_imex} and \eqref{mpslabel:eq:mps_eta_newton:in_proof}.
In view of the stopping criterion~\eqref{mpslabel:eq:stopping_newton:in_proof}, it holds that $\norm[h]{\rr_{h\eps}^i} \leq \eps$.
With this definition, the proposed linearization of one iteration of Algorithm~\ref{mpslabel:alg:mps} based on the Newton method is covered by Algorithm~\ref{mpslabel:alg:mps_newton}.

\subsection[Well-posedness]{Well-posedness of Algorithm~\ref{mpslabel:alg:mps_newton}}
\label{mpslabel:sec:proof:newton:well_posedness}
We show Theorem~\ref{mpslabel:thm:main:newton:convergence}{\textrm{(i)}} by induction:
For $0 \le i < J$ assume that Algorithm~\ref{mpslabel:alg:mps_newton} is well-defined for all $n = 0, \dots, i-1$.
In particular, by Lemma~\ref{mpslabel:le:newton:Linfty} and Theorem~\ref{mpslabel:thm:main:newton} we have the bounds
\begin{align}\label{mpslabel:eq:proof:induction:bounds}
\norm[\LL^\infty(\Omega)]{\mm_{h\eps}^i} \le C_\infty \quad\text{and}\quad \E(\mm_{h\eps}^i) \le C\,.
\end{align}
Now, the inductive step is to prove convergence of the Newton iteration \eqref{mpslabel:eq:mps_eta_newton}--\eqref{mpslabel:eq:stopping_newton} for time-step $n = i$.
We do this by verifying the assumptions~\eqref{mpslabel:eq:newton_local:invertible}--\eqref{mpslabel:eq:newton_local:lipschitz} of Theorem~\ref{mpslabel:thm:newton_local} for the Newton solver~\eqref{mpslabel:eq:mps_eta_newton:in_proof} with the initial value chosen as $\eeta^{i, 0} := \mm_{h\eps}^i$.
In Section~\ref{mpslabel:sec:newton:proof:lipschitz} we verify the Lipschitz continuity~\eqref{mpslabel:eq:newton_local:lipschitz}.
Invertibility~\eqref{mpslabel:eq:newton_local:invertible} is shown in Section~\ref{mpslabel:sec:newton:proof:invertibility}.
In Section~\ref{mpslabel:sec:newton:proof:x0} we prove that under the assumed CFL-conditions the initial guess $\eeta^{i, 0} := \mm_{h\eps}^i$ is an appropriate choice, which guarantees convergence of Newton's method.
Finally, in Section~\ref{mpslabel:sec:newton:proof:ellstar} we conclude by estimating the maximum number of Newton iterations required to achieve the required tolerance~\eqref{mpslabel:eq:stopping_newton:in_proof}, in particular showing that the number is finite.
Hence, Sections~\ref{mpslabel:sec:newton:proof:lipschitz}--\ref{mpslabel:sec:newton:proof:ellstar} prove Theorem~\ref{mpslabel:thm:main:newton:convergence}.\\ \\
\noindent Throughout the proof, we use the notation of Section~\ref{mpslabel:sec:newton:main_thm}--\ref{mpslabel:sec:apply_newton_to_mps} and consider the $\ell^2$-norm on $(\R^3)^N \simeq \R^{3N}$ defined by $\norm[2]{\xx} = \sum_{j=1}^{N} |\xx_j|^2$, as well as the induced matrix norm on $(\R^{3\times 3})^{N \times N} \simeq \R^{3N \times 3N}$ also denoted by $\norm[2]{\cdot}$, cf.\ \eqref{mpslabel:eq:induced:norm}.
\subsubsection{Lipschitz continuity of $\Grad\FF$}\label{mpslabel:sec:newton:proof:lipschitz}
By~\eqref{mpslabel:eq:induced:norm} it holds for arbitrary $\xx, \yy \in \R^{3N}$ that
\begin{equation*}
\begin{split}
\norm[2]{\Grad\FF(\xx) - \Grad\FF(\yy)} 
= \sup_{\uu, \vv \in \R^{3N}\setminus\{\0\}}
\frac{\uu^\top (\Grad\FF(\xx) - \Grad\FF(\yy)) \vv}{\norm[2]{\uu}\norm[2]{\vv}}\,.
\end{split}
\end{equation*}
With the representation~\eqref{mpslabel:eq:mps:newton:jacobian} and the estimates~\eqref{mpslabel:eq:total} we see
\begin{equation*}
\begin{split}
\uu^\top (\Grad\FF(\xx) - \Grad\FF(\yy)) \vv
&= \frac{k}{2}\inner[h]{\widehat{\uu}\times\Ph\big(\heffloc(\widehat{\xx})-\heffloc(\widehat{\yy})\big)}{\widehat{\vv}} \\
&\quad + \frac{k}{2}\inner[h]{(\widehat{\xx}-\widehat{\yy})\times\Ph\big(\heffloc(\widehat{\uu}) - \ff\big)}{\widehat{\vv}} \\
&\lesssim k\norm[\LL^\infty(\Omega)]{\widehat{\uu}} \norm[h]{\Ph\big(\heffloc(\widehat{\xx})-\heffloc(\widehat{\yy})\big)} \norm[h]{\widehat{\vv}}\\
&\quad+ k\norm[\LL^\infty(\Omega)]{\widehat{\xx}-\widehat{\yy}} \norm[h]{\Ph\big(\heffloc(\widehat{\uu}) - \ff\big)} \norm[h]{\widehat{\vv}} \\
&\lesssim kh^{-2}(\norm[2]{\uu} \norm[\LL^2(\Omega)]{\widehat{\xx}-\widehat{\yy}} + \norm[2]{\xx - \yy} \norm[\LL^2(\Omega)]{\widehat{\uu}}) \norm[h]{\widehat{\vv}}\,,
\end{split}
\end{equation*}
With the norm equivalence $h^{3/2}\norm[2]{\cdot} \simeq \norm[h]{\,\widehat{\cdot}\,} \simeq \norm[\LL^2(\Omega)]{\,\widehat{\cdot}\,}$ on $\R^{3N}$, we get uniformly for all $\xx, \yy \in \R^{3N}$ that
\begin{equation*}
\norm[2]{\Grad\FF(\xx) - \Grad\FF(\yy)} 
\lesssim kh \norm[2]{\xx - \yy}\,.
\end{equation*}
In particular, \eqref{mpslabel:eq:newton_local:lipschitz} holds for $\norm[2]{\cdot}$ with $R=+\infty$ and $L \simeq kh$.\qed
\subsubsection{Invertibility of $\Grad\FF(\xx^\ast)$}\label{mpslabel:sec:newton:proof:invertibility}
The unknown $\xx^\ast \in \R^{3N}$ is defined by $\FF(\xx^\ast) = \0$.
Hence,
\begin{equation*}
\begin{split}
0 
= [\widehat{\xx^\ast}(\zz_j)\varphi_{\zz_j}]^\top\FF(\xx^\ast)
\stackrel{\eqref{mpslabel:eq:mps:newton:F}}{=} \beta_{\zz_j}\big(|\xx_j^\ast|^2 - \xx_j^\ast\cdot\mm_{h\eps}^i(\zz_j)\big) \quad \text{for all}\quad j = 1, \dots, N
\end{split}
\end{equation*}
together with~\eqref{mpslabel:eq:proof:induction:bounds} guarantees boundedness
\begin{align}\label{mpslabel:eq:xstar_bounded}
\norm[\LL^\infty(\Omega)]{\widehat{\xx^\ast}} \le \norm[\LL^\infty(\Omega)]{\mm_{h\eps}^i} \le C_\infty\,.
\end{align}
Now the assumption $k=o(h^2)$ guarantees invertibility of $\Grad\FF(\xx^\ast)$ by ellipticity
\begin{equation*}
\yy^\top\Grad\FF(\xx^\ast)\yy
\stackrel{\eqref{mpslabel:eq:mps:newton:jacobian}}{=} \norm[h]{\widehat{\yy}}^2 
+ \frac{k}{2}\inner[h]{\widehat{\xx^\ast}\times\Ph\big(\heffloc(\widehat{\yy}) - \ff\big)}{\widehat{\yy}}
\gtrsim (1 - kh^{-2})\norm[h]{\widehat{\yy}}^2
\gtrsim \norm[h]{\widehat{\yy}}^2\,,
\end{equation*}
where we used~\eqref{mpslabel:eq:xstar_bounded} and the Cauchy--Schwarz inequality together with the estimates~\eqref{mpslabel:eq:total}.
To show boundedness of $(\Grad\FF(\xx^\ast))^{-1}$ we write
\begin{equation*}
\norm[2]{(\Grad\FF(\xx^\ast))^{-1}} 
= \sup_{\xx \in \R^{3N}\setminus\{\0\}}\frac{\norm[2]{(\Grad\FF(\xx^\ast))^{-1}\xx}}{\norm[2]{\xx}}
= \sup_{\xx \in \R^{3N}\setminus\{\0\}}\frac{\norm[2]{\yy(\xx)}}{\norm[2]{\xx}}\,,
\end{equation*}
with $\yy := \yy(\xx) := (\Grad\FF(\xx^\ast))^{-1}\xx$.
Using~\eqref{mpslabel:eq:mps:newton:jacobian}, it holds that
\begin{equation*}
\yy^\top\xx 
= \yy^\top\Grad\FF(\xx^\ast)\yy
= \norm[h]{\widehat{\yy}}^2 
+ \frac{k}{2}\inner[h]{\widehat{\xx^\ast}\times\Ph\big(\heffloc(\widehat{\yy}) - \ff\big)}{\widehat{\yy}}\,.
\end{equation*}
Using norm equivalences $h^{3/2}\norm[2]{\cdot} \simeq \norm[h]{\,\widehat{\cdot}\,} \simeq \norm[\LL^2(\Omega)]{\,\widehat{\cdot}\,}$ on $\R^{3N}$ and an inverse estimate, it follows that
\begin{equation*}
\begin{split}
h^{3}\norm[2]{\yy}^2 
&\lesssim \norm[h]{\widehat{\yy}}^2
= \yy^\top\xx 
- \frac{k}{2}\inner[h]{\widehat{\xx^\ast}\times\Ph\big(\heffloc(\widehat{\yy}) - \ff\big)}{\widehat{\yy}}\\
&\lesssim \norm[2]{\yy}\norm[2]{\xx} 
+ k\norm[\LL^\infty(\Omega)]{\widehat{\xx^\ast}}\norm[h]{\Ph\big(\heffloc(\widehat{\yy}) - \ff\big)}\norm[h]{\widehat{\yy}} \\
&\!\stackrel{\eqref{mpslabel:eq:total}}{\lesssim} \norm[2]{\yy}\norm[2]{\xx} 
+ k h^{-2}\norm[h]{\widehat{\yy}}^2
\lesssim \norm[2]{\yy}\norm[2]{\xx} 
+ k h\norm[2]{\yy}^2\,.
\end{split}
\end{equation*}
With the CFL condition $k = o(h^2)$ we estimate $h^3(1 - kh^{-2})\norm[2]{\yy} \lesssim h^3\norm[2]{\yy} \lesssim \norm[2]{\xx}$ and conclude that
\begin{equation*}
\norm[2]{(\Grad\FF(\xx^\ast))^{-1}} \lesssim h^{-3} \,.
\end{equation*}
In particular it holds~\eqref{mpslabel:eq:newton_local:invertible} for $\norm[2]{\cdot}$ with $C \simeq h^{-3}$.\qed
\subsubsection{Initial guess leads to convergence}\label{mpslabel:sec:newton:proof:x0}
We recall the results from Section~\ref{mpslabel:sec:newton:proof:lipschitz} and Section~\ref{mpslabel:sec:newton:proof:invertibility}:
The Newton iteration~\eqref{mpslabel:eq:mps_eta_newton:in_proof} satisfies the assumptions of Theorem~\ref{mpslabel:thm:newton_local} for $\norm[2]{\cdot}$ with $C \simeq h^{-3}$, $R = +\infty$ and $L \simeq kh$.
The theorem now guarantees convergence $\xx^\ell \to \xx^\ast$ in $\norm[2]{\cdot}$ as $\ell\to\infty$ of the Newton iteration~\eqref{mpslabel:eq:mps_eta_newton:in_proof} for any initial guess $\xx^0 \in \R^{3N}$ with $\norm[2]{\xx^\ast - \xx^0} \le 1 / (2CL) \simeq h^2 / k$.\\
\noindent Given $\mm_{h\eps}^i \in \Vh$, Algorithm~\ref{mpslabel:alg:mps_newton} defines the initial guess as $\xx^0 := [\eeta^{i, 0}] := [\mm_{h\eps}^i]$.
Let $\xx^\ast \in \R^{3N}$ be the solution of~\eqref{mpslabel:eq:find_zero}, i.e., by~\eqref{mpslabel:eq:mps:newton:F} it holds for all $\pphi_h\in\Vh$ that
\begin{equation*}
\inner[h]{\widehat{\xx^\ast} - \mm_{h\eps}^i}{\pphi_h}
= - \frac{k}{2}\inner[h]{\widehat{\xx^\ast} \times \Ph\big(\heffloc(\widehat{\xx^\ast}) + \PPi_h(\mm_{h\eps}^i, \mm_{h\eps}^{i-1})\big)}{\pphi_h}
+ \alpha \inner[h]{\widehat{\xx^\ast} \times (\widehat{\xx^\ast} - \mm_{h\eps}^{i})}{\pphi_h}\,.
\end{equation*}
Using $\pphi_h = \alpha(\widehat{\xx^\ast} - \mm_{h\eps}^i) - (k/2)\Ph\big(\heffloc(\widehat{\xx^\ast}) + \PPi_h(\mm_{h\eps}^i, \mm_{h\eps}^{i-1})\big) \in \Vh$ shows
\begin{equation*}
\begin{split}
\alpha\norm[h]{\widehat{\xx^\ast} - \mm_{h\eps}^i}^2 
&= \frac{k}{2}\inner[h]{\widehat{\xx^\ast} - \mm_{h\eps}^i}{\Ph\big(\heffloc(\widehat{\xx^\ast}) + \PPi_h(\mm_{h\eps}^i, \mm_{h\eps}^{i-1})\big)} \\
&= \frac{k}{2}\inner[h]{\widehat{\xx^\ast} - \mm_{h\eps}^i}{\Ph\heff(\widehat{\xx^\ast})}
+ \frac{k}{2}\inner[h]{\widehat{\xx^\ast} - \mm_{h\eps}^i}{\Ph\big(\PPi_h(\mm_{h\eps}^i, \mm_{h\eps}^{i-1}) - \ppi(\widehat{\xx^\ast})\big)}
\end{split}
\end{equation*}
We rewrite the first term on the right hand side as
\begin{equation*}
\begin{split}
4\inner[h]{\widehat{\xx^\ast} - \mm_{h\eps}^i}{\Ph\heff(\widehat{\xx^\ast})}
&\stackrel{\eqref{mpslabel:eq:pseudo-projection}}{=} 4\inner[]{\widehat{\xx^\ast} - \mm_{h\eps}^i}{\heff(\widehat{\xx^\ast})} \\
&\stackrel{\eqref{mpslabel:eq:heff}}{=} -4a(\widehat{\xx^\ast} - \mm_{h\eps}^i, \widehat{\xx^\ast}) 
+ 4\inner[\Omega]{\widehat{\xx^\ast} - \mm_{h\eps}^i}{\ff} \\
&= - a\big((2\widehat{\xx^\ast} - \mm_{h\eps}^i) - \mm_{h\eps}^i, (2\widehat{\xx^\ast} - \mm_{h\eps}^i) + \mm_{h\eps}^i\big) \\
&\quad\, +2\inner[\Omega]{2\widehat{\xx^\ast} - \mm_{h\eps}^i}{\ff}
-2\inner[\Omega]{\mm_{h\eps}^i}{\ff} \\
&\stackrel{\eqref{mpslabel:eq:llg:energy}}{=} 2\E(\mm_{h\eps}^i)
- a(2\widehat{\xx^\ast} - \mm_{h\eps}^i, 2\widehat{\xx^\ast} - \mm_{h\eps}^i)
+ 2\inner[\Omega]{\widehat{\xx^\ast} - \mm_{h\eps}^i}{\ff} \,.
\end{split}
\end{equation*}
With~\eqref{mpslabel:eq:xstar_bounded} and the G{\aa}rding inequality~\eqref{mpslabel:eq:gaarding} we estimate
\begin{equation*}
\begin{split}
- a(2\widehat{\xx^\ast} - \mm_{h\eps}^i, 2\widehat{\xx^\ast} - \mm_{h\eps}^i)
\le C_3 \norm[\LL^2(\Omega)]{2\widehat{\xx^\ast} - \mm_{h\eps}^i}^2
- C_2 \norm[\HH^1(\Omega)]{2\widehat{\xx^\ast} - \mm_{h\eps}^i}^2
\le 3 C_3 C_\infty^2 |\Omega|\,.
\end{split}
\end{equation*}
Now combination with the generous estimates
\begin{equation*}
\begin{split}
\inner[h]{\widehat{\xx^\ast} - \mm_{h\eps}^i}{\Ph\big(\PPi_h(\mm_{h\eps}^i, \mm_{h\eps}^{i-1}) - \ppi(\widehat{\xx^\ast})\big)}
&\le 2C_\infty^2|\Omega| (2C_{\ppi} + \norm[L(\LL^2(\Omega); \LL^2(\Omega))]{\ppi})\,, \\
\inner[\Omega]{2\widehat{\xx^\ast} - \mm_{h\eps}^i}{\ff}
&\le 3C_\infty\norm[\LL^2(\Omega)]{\ff}
\end{split}
\end{equation*}
yields
\begin{equation*}
\begin{split}
\norm[h]{\widehat{\xx^\ast} - \mm_{h\eps}^i}^2 
&\lesssim \E(\mm_{h\eps}^i) k 
+ C\big(C_3, C_\infty, |\Omega|, C_{\ppi}, \norm[L(\LL^2(\Omega); \LL^2(\Omega))]{\ppi}, \norm[\LL^2(\Omega)]{\ff}\big) k
\stackrel{\eqref{mpslabel:eq:proof:induction:bounds}}{\lesssim} k \,.
\end{split}
\end{equation*}
Due to the norm equivalence $h^{3/2}\norm[2]{\cdot} \simeq \norm[h]{\,\widehat{\cdot}\,}$ on $\R^{3N}$, the claim $\xx^0\in B(\norm[2]{\cdot}; \xx^\ast, 1/(2CL))$ follows for $h, k \to 0$ from $k = o(h^{7/3})$ via
\begin{equation*}
\norm[2]{\xx^\ast -\xx^0} 
\simeq h^{-3/2}\norm[h]{\widehat{\xx^\ast} - \mm_{h\eps}^i}
\lesssim k^{1/2}h^{-3/2}
= k^{-1}h^2 (k^{3/2}h^{-7/2})
< k^{-1}h^2
\simeq 1/(2CL)\,.
\end{equation*}
Hence, the choice $\eeta^{i, 0} := \mm_{h\eps}^i$ implies convergence $\xx^\ell \to \xx^\ast$ in $\norm[2]{\cdot}$ as $\ell\to\infty$.
\qed
\subsubsection{Finite number of Newton iterations}\label{mpslabel:sec:newton:proof:ellstar}
In the previous section we showed $\xx^\ell \to \xx^\ast$ in $\norm[2]{\cdot}$ and therefore also $\norm[2]{\ddelta\xx^\ell} \to 0$ as $\ell \to \infty$.
Now let $\ell^\ast \in \N$ be the smallest integer, such that~\eqref{mpslabel:eq:stopping_newton:in_proof} is satisfied.
The index $\ell^\ast$ is well defined due to $\widehat{\ddelta\xx^\ell} = \uu^{i, \ell}$ and
\begin{equation*}
\begin{split}
\norm[h]{\Interp\big(\widehat{\ddelta\xx^{\ell}}\times\Ph\big(\heffloc(\widehat{\ddelta\xx^{\ell}}) - \ff\big)\big)}
\lesssim \norm[\infty]{\ddelta\xx^{\ell}}h^{-2}\norm[h]{\widehat{\ddelta\xx^{\ell}}}
\lesssim h^{-1/2}\norm[2]{\ddelta\xx^{\ell}}^2 \to 0
\;\; \text{as} \;\; \ell \to \infty.
\end{split}
\end{equation*}
Recalling that by Remark~\ref{mpslabel:re:newton:error_decay} it holds that
\begin{equation*}
\norm[2]{\xx^\ast - \xx^{\ell}} \le  \frac{2\norm[2]{\xx^0 - \xx^\ast}}{2^{2^\ell}}\,,
\end{equation*}
we estimate the index $\ell^\ast \in \N$:
With the estimates~\eqref{mpslabel:eq:total} and the norm equivalence $h^{3/2}\norm[2]{\cdot} \simeq \norm[h]{\,\widehat{\cdot}\,}$ on $\R^{3N}$, it holds for the error $\rr_{h\eps}^i$ that
\begin{equation*}
\begin{split}
\norm[h]{\rr_{h\eps}^i} 
&= \norm[h]{\Interp\big(\uu^{i, \ell^\ast}\times\Ph\big(\heffloc(\uu^{i, \ell^\ast}) - \ff\big)\big)}
= \norm[h]{\Interp\big(\widehat{\ddelta\xx^{\ell^\ast}}\times\Ph\big(\heffloc(\widehat{\ddelta\xx^{\ell^\ast}}) - \ff\big)\big)} \\
&\le \norm[\LL^\infty(\Omega)]{\widehat{\ddelta\xx^{\ell^\ast}}}\norm[h]{\Ph\big(\heffloc(\widehat{\ddelta\xx^{\ell^\ast}}) - \ff\big)}
\lesssim \norm[\infty]{\ddelta\xx^{\ell^\ast}}h^{-2}\norm[h]{\widehat{\ddelta\xx^{\ell^\ast}}} \\
&\lesssim h^{-1/2}\norm[2]{\ddelta\xx^{\ell^\ast}}^2
= h^{-1/2}\norm[2]{\xx^{\ell^\ast + 1} - \xx^{\ell^\ast}}^2
\lesssim h^{-1/2}\Big(\norm[2]{\xx^{\ast} - \xx^{\ell^\ast + 1}}^2
+ \norm[2]{\xx^{\ast} - \xx^{\ell^\ast}}^2\Big) \\
&\lesssim 2^{-2^{\ell^\ast}}h^{-1/2}\norm[2]{\xx^0 - \xx^\ast}^2
\lesssim 2^{-2^{\ell^\ast}}kh^{-7/2}\,.
\end{split}
\end{equation*}
Since $\ell^\ast \in \N$ is defined as the smallest integer, such that~\eqref{mpslabel:eq:stopping_newton:in_proof} is satisfied, $\ell^\ast$ is estimated from above by $\log_2\log_2(C_\newton kh^{-7/2}\eps^{-1})$ with a generic constant $C_\newton > 0$.
\qed

\bibliographystyle{alpha} 
\bibliography{ref}

\newcommand{\etalchar}[1]{$^{#1}$}
\begin{thebibliography}{ADMN21}

\bibitem[AD15]{ad2015}
F.~Alouges and G.~{Di Fratta}.
\newblock Homogenization of composite ferromagnetic materials.
\newblock {\em Proc. R. Soc. Lond. A}, 471(2182):20150365, 2015.

\bibitem[ADMN21]{abmn2021}
F.~Alouges, A.~{De Bouard}, B.~Merlet, and L.~Nicolas.
\newblock Stochastic homogenization of the {L}andau--{L}ifshitz--{G}ilbert
  equation.
\newblock {\em Stoch. Partial Differ. Equ. Anal. Comput.}, 9:789--818, 2021.

\bibitem[AFKL21]{afkl2021}
G.~Akrivis, M.~Feischl, B.~Kov\'{a}cs, and C.~Lubich.
\newblock Higher-order linearly implicit full discretization of the
  {L}andau-{L}ifshitz-{G}ilbert equation.
\newblock {\em Math. Comp.}, 90(329):995--1038, 2021.

\bibitem[AFM06]{afm2006}
E.~Acerbi, I.~Fonseca, and G.~Mingione.
\newblock Existence and regularity for mixtures of micromagnetic materials.
\newblock {\em Proc. R. Soc. Lond. A}, 462(2072):2225--2243, 2006.

\bibitem[AHP{\etalchar{+}}14]{ahpprs2014}
C.~Abert, G.~Hrkac, M.~Page, D.~Praetorius, M.~Ruggeri, and D.~Suess.
\newblock Spin-polarized transport in ferromagnetic multilayers: {A}n
  unconditionally convergent {FEM} integrator.
\newblock {\em Comput. Math. Appl.}, 68(6):639--654, 2014.

\bibitem[AJ06]{aj2006}
F.~Alouges and P.~Jaisson.
\newblock Convergence of a finite element discretization for the
  {L}andau--{L}ifshitz equation in micromagnetism.
\newblock {\em Math. Models Methods Appl. Sci.}, 16(2):299--316, 2006.

\bibitem[AKST14]{akst2014}
F.~Alouges, E.~Kritsikis, J.~Steiner, and J.-C. Toussaint.
\newblock A convergent and precise finite element scheme for
  {L}andau--{L}ifschitz--{G}ilbert equation.
\newblock {\em Numer. Math.}, 128(3):407--430, 2014.

\bibitem[AKT12]{akt2012}
F.~Alouges, E.~Kritsikis, and J.-C. Toussaint.
\newblock A convergent finite element approximation for
  {L}andau--{L}ifschitz--{G}ilbert equation.
\newblock {\em Physica B}, 407(9):1345--1349, 2012.

\bibitem[Alo08]{alouges2008a}
F.~Alouges.
\newblock A new finite element scheme for {L}andau--{L}ifchitz equations.
\newblock {\em Discrete Contin. Dyn. Syst. Ser. S}, 1(2):187--196, 2008.

\bibitem[AS92]{as1992}
F.~Alouges and A.~Soyeur.
\newblock On global weak solutions for {L}andau--{L}ifshitz equations:
  Existence and nonuniqueness.
\newblock {\em Nonlinear Anal.}, 18(11):1071--1084, 1992.

\bibitem[Bar06]{bartels2006}
S.~Bartels.
\newblock Constraint preserving, inexact solution of implicit discretizations
  of {L}andau--{L}ifshitz--{G}ilbert equations and consequences for
  convergence.
\newblock {\em PAMM}, 6(1):19--22, 2006.

\bibitem[Bar15]{bartels2015}
S.~Bartels.
\newblock {\em Numerical methods for nonlinear partial differential equations}.
\newblock Springer, Cham, 2015.

\bibitem[BBNP14]{bbnp2014}
L'. Ba{\v n}as, Z.~Brze{\'z}niak, M.~Neklyudov, and A.~Prohl.
\newblock {\em Stochastic ferromagnetism: Analysis and numerics}, volume~58 of
  {\em Studies in Mathematics}.
\newblock De Gruyter, 2014.

\bibitem[BFF{\etalchar{+}}14]{bffgpprs2014}
F.~Bruckner, M.~Feischl, T.~F{\"u}hrer, P.~Goldenits, M.~Page, D.~Praetorius,
  M.~Ruggeri, and D.~Suess.
\newblock Multiscale modeling in micromagnetics: {E}xistence of solutions and
  numerical integration.
\newblock {\em Math. Models Methods Appl. Sci.}, 24(13):2627--2662, 2014.

\bibitem[BKP08]{bkp2008}
S.~Bartels, J.~Ko, and A.~Prohl.
\newblock Numerical analysis of an explicit approximation scheme for the
  {L}andau--{L}ifshitz--{G}ilbert equation.
\newblock {\em Math. Comp.}, 77(262):773--788, 2008.

\bibitem[BP06]{bp2006}
S.~Bartels and A.~Prohl.
\newblock Convergence of an implicit finite element method for the
  {L}andau--{L}ifshitz--{G}ilbert equation.
\newblock {\em SIAM J. Numer. Anal.}, 44(4):1405--1419, 2006.

\bibitem[Bro63]{brown1963}
W.~F. Brown.
\newblock {\em Micromagnetics}.
\newblock Interscience Publishers, New York, 1963.

\bibitem[Cim08]{cimrak2008b}
I.~Cimr{\'a}k.
\newblock A survey on the numerics and computations for the
  {L}andau--{L}ifshitz equation of micromagnetism.
\newblock {\em Arch. Comput. Methods Eng.}, 15(3):277--309, 2008.

\bibitem[Cim09]{cimrak2009}
I.~Cimr{\'a}k.
\newblock Convergence result for the constraint preserving mid-point scheme for
  micromagnetism.
\newblock {\em J. Comput. Appl. Math.}, 228(1):238--246, 2009.

\bibitem[DD20]{dd2020}
E.~Davoli and G.~{Di Fratta}.
\newblock Homogenization of chiral magnetic materials - {A} mathematical
  evidence of {D}zyaloshinskii's predictions on helical structures.
\newblock {\em J. Nonlinear Sci.}, 30:1229--1262, 2020.

\bibitem[DMRS20]{dmrs2020}
G.~{Di Fratta}, C.~B. Muratov, F.~N. Rybakov, and V.~V Slastikov.
\newblock Variational principles of micromagnetics revisited.
\newblock {\em SIAM J. Math. Anal.}, 52(4):3580--3599, 2020.

\bibitem[DPP{\etalchar{+}}20]{dpprs2017}
G.~{Di Fratta}, C.-M. Pfeiler, D.~Praetorius, M.~Ruggeri, and B.~Stiftner.
\newblock Linear second-order {IMEX}-type integrator for the (eddy current)
  {L}andau--{L}ifshitz--{G}ilbert equation.
\newblock {\em IMA J. Numer. Anal.}, 40(4):2802--2838, 2020.

\bibitem[Dzy58]{dzyaloshinskii1958}
I.~Dzyaloshinskii.
\newblock A thermodynamic theory of `weak' ferromagnetism of
  antiferromagnetics.
\newblock {\em J. Phys. Chem. Solids}, 4(4):241--255, 1958.

\bibitem[FCS13]{fcs2013}
A.~Fert, V.~Cros, and J.~Sampaio.
\newblock Skyrmions on the track.
\newblock {\em Nat. Nanotechnol.}, 8(3):152--156, 2013.

\bibitem[FRC17]{frc2017}
A.~Fert, N.~Reyren, and V.~Cros.
\newblock Magnetic skyrmions: advances in physics and potential applications.
\newblock {\em Nat. Rev. Mater.}, 2:17031, 2017.

\bibitem[FT17]{ft2017}
M.~Feischl and T.~Tran.
\newblock The {Eddy Current-LLG} equations: {FEM-BEM} coupling and a priori
  error estimates.
\newblock {\em SIAM J. Numer. Anal.}, 55(4):1786--1819, 2017.

\bibitem[GC07]{gc2007}
C.~J. Garc{\'i}a-Cervera.
\newblock Numerical micromagnetics: {A} review.
\newblock {\em Bol. Soc. Esp. Mat. Apl. SeMA}, 39:103--135, 2007.

\bibitem[Gil55]{gilbert1955}
T.~L. Gilbert.
\newblock A {L}agrangian formulation of the gyromagnetic equation of the
  magnetization fields.
\newblock {\em Phys. Rev.}, 100:1243, 1955.
\newblock Abstract only.

\bibitem[GR86]{gr1986}
V.~Girault and P.-A. Raviart.
\newblock {\em Finite element methods for {N}avier--{S}tokes equations:
  {T}heory and algorithms}, volume~5 of {\em Springer Series in Computational
  Mathematics}.
\newblock Springer, 1986.

\bibitem[HPP{\etalchar{+}}19]{hpprss2019}
G.~Hrkac, C.-M. Pfeiler, D.~Praetorius, M.~Ruggeri, A.~Segatti, and
  B.~Stiftner.
\newblock Convergent tangent plane integrators for the simulation of chiral
  magnetic skyrmion dynamics.
\newblock {\em Adv. Comput. Math.}, 45(3):1329--1368, 2019.

\bibitem[HS98]{hs1998}
A.~Hubert and R.~Sch{\"a}fer.
\newblock {\em Magnetic domains: {T}he analysis of magnetic microstructures}.
\newblock Springer, 1998.

\bibitem[KP06]{kp2006}
M.~Kruzik and A.~Prohl.
\newblock Recent developments in the modeling, analysis, and numerics of
  ferromagnetism.
\newblock {\em SIAM Rev.}, 48(3):439--483, 2006.

\bibitem[KW18]{kw2018}
E.~Kim and J.~Wilkening.
\newblock Convergence of a mass-lumped finite element method for the
  {L}andau--{L}ifshitz equation.
\newblock {\em Quart. Appl. Math.}, 76:383--405, 2018.

\bibitem[LL35]{ll1935}
L.~Landau and E.~Lifshitz.
\newblock On the theory of the dispersion of magnetic permeability in
  ferromagnetic bodies.
\newblock {\em Phys. Zeitsch. der Sow.}, 8:153--168, 1935.

\bibitem[Mor60]{moriya1960}
T.~Moriya.
\newblock Anisotropic superexchange interaction and weak ferromagnetism.
\newblock {\em Phys. Rev.}, 120(91):91, 1960.

\bibitem[ngs]{ngsolve}
{N}etgen/{NGS}olve {F}inite {E}lement {L}ibrary.
\newblock \url{https://ngsolve.org/}.
\newblock Accessed on March 12, 2022.

\bibitem[Pfe]{commics}
C.-M. Pfeiler.
\newblock Commics -- {A} {P}ython module for {C}omputational {M}icromagnetics.
\newblock \url{https://gitlab.asc.tuwien.ac.at/cpfeiler/commics}.
\newblock Accessed on March 12, 2022.

\bibitem[Pra04]{praetorius2004}
D.~Praetorius.
\newblock Analysis of the operator {$\Delta^{-1}\mathrm{div}$} arising in
  magnetic models.
\newblock {\em Z. Anal. Anwend.}, 23(3):589--605, 2004.

\bibitem[Pro01]{prohl2001}
A.~Prohl.
\newblock {\em Computational micromagnetism}.
\newblock Teubner, Wiesbaden, 2001.

\bibitem[PRS18]{prs2018}
D.~Praetorius, M.~Ruggeri, and B.~Stiftner.
\newblock Convergence of an implicit-explicit midpoint scheme for computational
  micromagnetics.
\newblock {\em Comput.\ Math.\ Appl.}, 75(5):1719--1738, 2018.

\bibitem[PRS{\etalchar{+}}20]{prsehhsmp2020}
C.-M. Pfeiler, M.~Ruggeri, B.~Stiftner, L.~Exl, M.~Hochsteger, G.~Hrkac,
  J.~Sch\"oberl, N.~J. Mauser, and D.~Praetorius.
\newblock Computational micromagnetics with {C}ommics.
\newblock {\em Comput. Phys. Commun.}, 248:106965, 2020.

\bibitem[QSS07]{qss2007}
A.~Quarteroni, R.~Sacco, and F.~Saleri.
\newblock {\em Numerical mathematics}, volume~37 of {\em Texts in Applied
  Mathematics}.
\newblock Springer-Verlag, Berlin, second edition, 2007.

\bibitem[SCR{\etalchar{+}}13]{scrtf2013}
J.~Sampaio, V.~Cros, S.~Rohart, A.~Thiaville, and A.~Fert.
\newblock Nucleation, stability and current-induced motion of isolated magnetic
  skyrmions in nanostructures.
\newblock {\em Nat. Nanotechnol.}, 8(11):839--844, 2013.

\bibitem[Tho06]{thomee2006}
V.~Thom{\'e}e.
\newblock {\em Galerkin finite element methods for parabolic problems},
  volume~25 of {\em Springer Series in Computational Mathematics}.
\newblock Springer, second edition, 2006.

\bibitem[TRJF12]{trjcf2012}
A.~Thiaville, S.~Rohart, V.~Ju\'{e}, \'{E}.~Cros, and A.~Fert.
\newblock Dynamics of {D}zyaloshinskii domain walls in ultrathin magnetic
  films.
\newblock {\em Europhys. Lett.}, 100(5):57002, 2012.

\end{thebibliography}
\end{document}